\theoremstyle{definition}
\newtheorem{rmk}{Remark}[section]
\newtheorem{rmks}[rmk]{Remarks}
\newtheorem{defin}[rmk]{Definition}
\newtheorem{defs}[rmk]{Definitions}
\theoremstyle{plain}
\newtheorem{thm}[rmk]{Theorem}
\newtheorem{prop}[rmk]{Proposition}
\newtheorem{lem}[rmk]{Lemma}
\theoremstyle{remark}
\newcommand{\Z}{\mathbb{Z}}
\newcommand{\N}{\mathbf{N}} 
\newcommand{\R}{\mathbb{R}} 
\newcommand{\C}{\mathbb{C}}
\newcommand{\Q}{\mathbb{Q}}
\newcommand{\Gr}{\operatorname{Gr}}
\renewcommand{\P}{\mathbb{P}}
\newcommand{\M}{\mathbf{M}}
\newcommand{\Stab}{\operatorname{Stab}}
\renewcommand{\O}{\mathcal{O}}
\renewcommand{\S}{\mathbf{S}}
\newcommand{\NS}{\operatorname{NS}}
\newcommand{\Amp}{\operatorname{Amp}}
\renewcommand{\Im}{\operatorname{Im}} %parte immaginaria
\renewcommand{\Re}{\operatorname{Re}} %parte reale
\renewcommand{\emptyset}{\varnothing} %insieme vuoto
\newcommand{\tensor}{\otimes} %tensore
\newcommand{\Aut}{\operatorname{Aut}}
\newcommand{\Hom}{\operatorname{Hom}}
\newcommand{\ch}{\operatorname{ch}}
\newcommand{\Coh}{\operatorname{Coh}}
\newcommand{\Ext}{\operatorname{Ext}}
\newcommand{\rk}{\operatorname{rk}}
\title{Bridgeland wall-crossing for some Simpson moduli spaces on $\P^1\times\P^1$}
\author{Matteo Altavilla}
\address{Department of Mathematics, University of Utah, 155 South 1400 East, Salt
Lake City, UT 84112-0090, USA}
\email{altavilla@math.utah.edu}
\begin{document}
\maketitle

\begin{abstract}
We investigate the wall-crossing behavior as Bridgeland moduli spaces for some Simpson moduli spaces of Gieseker-semistable torsion sheaves on $\P^1\times \P^1$ with linear Hilbert polynomial. In particular, we recover some of the birational transformations of these spaces described in \cite{maican1}, \cite{moon2} and \cite{moon1} as wall-crossing maps in certain slices of $\Stab(\P^1\times \P^1)$.

\end{abstract}
\tableofcontents

\section{Introduction}
Bridgeland first introduced stability conditions on triangulated categories in \cite{bridgeland1} as a tool to approach Douglas' theory of $D$-branes, and then applied it to the study of a smooth projective varieties via the bounded derived category of coherent sheaves $D^b(X)$, starting with $K3$ surfaces in \cite{bridgeland2}. The main result of this theory is the existence of a complex manifold $\Stab(X)$ parametrizing all such stability conditions, which has several nice features such as a wall-and-chamber structure, group actions, covering properties.

This field has grown quickly in the past decade, even though the very existence of such stability conditions for a specific variety $X$ is already something that needs a lot of effort to be proven. For curves all is known (see \cite{macri1}), while for surfaces a great deal of general machinery has been developed for the study of the structure of $\Stab(X)$ (see for example \cite{bridgeland2}, or \cite{macri2} for a survey); the most recent results in this direction have been achieved in \cite{abelian3}, \cite{li} and \cite{fano3}, with the construction of Bridgeland stability conditions for abelian and Fano threefolds, the ultimate goal being the non-emptiness of the stability manifold for Calabi-Yau threefolds.

Another important aspect of this theory is the study of moduli spaces of semistable objects $\M_\sigma(v)$ of a given class $v$ as $\sigma$ varies in $\Stab(X)$. First of all, when the coarse moduli spaces exist, it is not always true that they are projective, even if they always carry a nef line bundle (\cite{projk3}); projectivity has been proven in several cases like $\P^2$ in \cite{p2}, $K3$'s in \cite{projk3}, Enriques surfaces in \cite{enriques}, del Pezzo surfaces of Picard rank 1 in \cite{arcara}, but it remains still open in general, since the known proofs usually rely on special techniques like Fourier-Mukai transforms or quiver representations.

Once one gets a hold on some particular moduli spaces, the next question is the study of its birational geometry via wall-crossing: we know that Bridgeland moduli spaces for a fixed class of objects in the derived category are constant inside a chamber of $\Stab(X)$, and it undergoes some birational surgery when crossing a wall. Several known birational transformations between moduli spaces have been realized as Bridgeland wall-crossings; the first prominent result in this sense was the one in \cite{p2}, where the authors proved that the entire Minimal Model Program (MMP) for the Hilbert scheme of points on $\P^2$ could be realized by subsequent wall crossings along a ray in $\Stab(\P^2)$.

After that, the same result was proven for the moduli space of any class on $\P^2$ in \cite{zhaop2}, for $K3$'s in \cite{projk3}, for Enriques surfaces in \cite{enriques} and partially for the Hilbert scheme of points on $\P^1\times\P^1$ and the first del Pezzo surface in \cite{coskunbertram}.

Even when the wall-crossing does not yield the entire MMP, we can still recover crucial information about the birational geometry of the moduli spaces from it, especially when we are able to identify one of them with know projective varieties such as Gieseker moduli spaces. Problems in this sense have been studied for example in \cite{ben1} and \cite{ben2} for the moduli spaces of ideal sheaves of twisted cubics and elliptic quartic curves on $\P^3$.

In the present paper we address the case of certain Simpson moduli spaces of Gieseker-semistable torsion sheaves on $\P^1\times\P^1$, for which a lot of birational geometry is known from the work in \cite{maican1}, \cite{moon2} or \cite{moon1}; we proceed as follows: in Section 2, we recall the basic results from the theory of Bridgeland stability conditions on surfaces and we present some preliminary definitions and computations for the case in exam; in Section 3 we go over the wall-crossing behavior for three different moduli spaces of torsion sheaves, recovering most of the birational results from \cite{maican1}, \cite{moon2} and \cite{moon1}; finally, in the Appendix we carry out some calculations needed in the Theorems from Section 3.

\subsection*{Acknowledgements} The author would like to thank Aaron Bertram for introducing him to the subject and for many useful discussions.

\section{Preliminaries}
\subsection{Preliminaries on Bridgeland stability on surfaces}
Throughout this section $X$ will be a smooth projective surface, even if some of the first results hold in more generality for higher dimensional smooth projective varieties. We omit most of the proofs for the sake of simplicity, and we refer to \cite{macri2} for a complete and exhaustive treatment of the subject whenever a reference is not indicated explicitly.

We will denote by $D^b(X)$ the bounded derived category of coherent sheaves on $X$ and by $K_0(X)=K_0(\Coh(X))$ the Grothendieck group of $X$, equipped with the group  homomorphism $\ch \colon K_0(X) \to H^{2*}(X,\Q)$, i.e. the Chern character.
\begin{defin} A \emph{slicing} $\mathcal{P}$ of $D^b(X)$ is a collection of subcategories $\mathcal{P}(\phi)\subset D^b(X)$ for $\phi\in \R$ such that
\begin{itemize}
\item $\mathcal{P}(\phi)[1]=\mathcal{P}(\phi+1)$;
\item $\Hom (\mathcal{P}(\phi_1),\mathcal{P}(\phi_2))=0$ for $\phi_1>\phi_2$;
\item for all $E\in D^b(X)$ there exists a unique \emph{Harder-Narasimhan} filtration, i.e. there exist $\phi_1>\dots> \phi_m \in \R$, objects $E_i\in D^b(X)$ and triangles
\[
\xymatrix{0=E_0 \ar[r] & E_1\ar[d] \ar[r] &E_2 \ar[d]\ar[r] &\dots \ar[r] &E_{m-1} \ar[d]\ar[r]&E_m=E \ar[d]
\\ & A_1 \ar@{-->}[ul]& A_2 \ar@{-->}[ul]& & A_{m-1} \ar@{-->}[ul]& A_m \ar@{-->}[ul]}
\]
such that $A_i\in \mathcal{P}(\phi_i)$ for all $i=1,\dots,m$.
\end{itemize}

We then define the \emph{heart} of the slicing to be $\mathcal{P}(0,1]$, i.e.  the extension-closure of the set of subcategories $\{\mathcal{P}(\phi) \ | \ \phi\in(0,1]\}$.
\end{defin}
\begin{rmk} Giving a slicing of $D^b(X)$ is equivalent to giving a full abelian subcategory $\mathcal{A}\subset D^b(X)$ that is also the \emph{heart} of a bounded $t$-structure; the correspondence is obtained exactly by setting $\mathcal{A}:=\mathcal{P}(0,1]$.
\end{rmk}
\begin{defin} A \emph{stability function} on an abelian category $\mathcal{A}$ is an additive homomorphism $Z\colon K_0(\mathcal{A}) \to \C$ such that for all $0\neq E\in \mathcal{A}$ we have 
\[
\Im Z(E) \geq 0 \qquad \text{and} \qquad \Re Z(E) <0 \text{ if } \Im Z(E)=0.
\]
\end{defin}
\begin{rmk} We will always have $\mathcal{A} \subset D^b(X)$, and one usually asks $Z$ to factor through the Chern character; if we let $\ch(K_0(\mathcal{A})):=\Lambda$ (which is independent on the choice of $\mathcal{A}$ in this case), we can actually assume $Z \colon \Lambda\to \C$.
\end{rmk}
We then define 
\[\mu (E) := -\dfrac{\Re Z(E)}{\Im Z (E)}\]
to be the \emph{slope function} associated to the stability function $Z$, where we interpret dividing by $0$ to be equal to $+\infty$.

\begin{defin} An object $E \in \mathcal{A}$ is called \emph{(semi)stable} with respect to $Z$ if for all proper subobjects $F \subset E$ in $\mathcal{A}$ one has $\mu(F) < (\leq) \ \mu(E)$; equivalently, if for all quotients $E \twoheadrightarrow Q$
in $\mathcal{A}$ one has $\mu(E) < (\leq)\ \mu(Q)$.
\end{defin}

\begin{defin} A \emph{Bridgeland stability condition} $\sigma=(\mathcal{P}, Z)$ on $X$ is given by the datum of:
\begin{itemize}
\item A slicing $\mathcal{P}$ of $D^b(X)$;
\item A stability function $Z$ on its heart, called \emph{central charge};
\end{itemize}
with the following two properties:
\begin{itemize}
\item[(i)] $Z$ must be compatible with $\mathcal{P}$, i.e. for all $0\neq E \in \mathcal{P}(\phi)$ we must have $Z(E) \in \R_{>0}\cdot e^{i\pi\phi}$.
\item[(ii)] $Z$ has the \emph{support property}; there are several formulations of this condition, but for our purposes we'll use the following one: there exists a quadratic form $Q$ on $\Lambda$ that is negative definite on $\ker Z$ and such that given $E$ a semistable object, $Q(\ch(E),\ch(E))\geq0$.
\end{itemize}
\end{defin}
\begin{rmks} As before, a stability condition can also be thought as a pair $\sigma=(\mathcal{A},Z)$ with the same properties, but now $\mathcal{A}$ is the heart of a $t$-structure. Also, by definition we have that an object $E\in \mathcal{P}(\phi)$ is semistable, and we call it a \emph{semistable object of phase $\phi$}. From the compatibility condition we can also reformulate the Harder-Narasimhan filtration in terms of semistable objects and their phases.
\end{rmks}

We let $\Stab(X)$ be the set of Bridgeland stability conditions on $X$.
In \cite{bridgeland1}, Bridgeland introduced this kind of stability conditions and proved the following fundamental Theorem
\begin{thm} \label{bridgeland} There exists a metric on $\Stab(X)$ such that the forgetful map
\[
\mathcal{Z}\colon \xymatrix@R-2pc{\Stab(X) \ar[r]& \Hom(\Lambda, \C) \\ \sigma(\mathcal{P},Z) \ar@{|->}[r] & Z}
\]
is a local homeomorphism. In particular, $\Stab(X)$ is a complex manifold.
\end{thm}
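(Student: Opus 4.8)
The plan is to follow Bridgeland's original strategy: equip $\Stab(X)$ with a generalized metric and show that $\mathcal{Z}$ restricts to a homeomorphism from a neighborhood of each point onto an open subset of the finite-dimensional complex vector space $\Hom(\Lambda,\C)$, whence the complex-manifold structure is inherited through the resulting charts. First I would fix the topology on the target: since $\Lambda$ is a lattice of finite rank, $\Hom(\Lambda,\C)\cong\C^n$ carries its usual norm topology. On the source, given a slicing $\mathcal{P}$ and $0\neq E$, write $\phi^+_\sigma(E)$ and $\phi^-_\sigma(E)$ for the largest and smallest phases appearing in the Harder--Narasimhan filtration of $E$, and let $m_\sigma(E)=\sum_i|Z(A_i)|$ be its total mass (the $A_i$ being the HN factors). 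I would then define
\[
d(\sigma_1,\sigma_2)=\sup_{0\neq E}\left\{\,|\phi^-_{\sigma_1}(E)-\phi^-_{\sigma_2}(E)|,\ |\phi^+_{\sigma_1}(E)-\phi^+_{\sigma_2}(E)|,\ \Big|\log\tfrac{m_{\sigma_2}(E)}{m_{\sigma_1}(E)}\Big|\,\right\}
\]
and check that this is a generalized metric for which $\mathcal{Z}$ is continuous (indeed locally Lipschitz).

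The argument then splits into local injectivity and a local lifting statement. For injectivity, suppose $\sigma=(\mathcal{P},Z)$ and $\tau=(\mathcal{Q},Z)$ share the same central charge and have $d(\sigma,\tau)$ small. Because $Z$ is compatible with each slicing, the phase of any semistable object $E$ is pinned to $\arg Z(E)/\pi$; a short bootstrapping argument on HN factors then forces the same objects to be semistable in both slicings and $\mathcal{P}(\phi)=\mathcal{Q}(\phi)$ for every $\phi$, so $\sigma=\tau$. This shows $\mathcal{Z}$ is injective on small balls.

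The heart of the proof is the deformation (lifting) theorem: for fixed $\sigma=(\mathcal{P},Z)$ there is $\eps_0>0$ so that every $W\in\Hom(\Lambda,\C)$ with $\|W-Z\|$ sufficiently small extends to a \emph{unique} stability condition $\tau=(\mathcal{Q},W)$ with $d(\mathcal{P},\mathcal{Q})<\eps_0$. I would build $\mathcal{Q}$ by refining $\mathcal{P}$: on a thin slice $\mathcal{P}((a,b])$ with $b-a<1$, which is a quasi-abelian category, the perturbed $W$ induces a stability function whose Harder--Narasimhan factors move the phases only slightly, and assembling these refinements over all thin slices produces the new slicing $\mathcal{Q}$; uniqueness follows from the rigidity already exploited for injectivity. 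This is the step I expect to be the main obstacle, because one must guarantee that the perturbed HN factors genuinely exist and have controlled phases even though a $\sigma$-semistable object may fail to remain $W$-semistable. Here the support property — the quadratic form $Q$ negative definite on $\ker Z$ and nonnegative on semistable classes — is exactly what prevents the phases of semistable objects from accumulating and makes the refinement finite and well-behaved.

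Finally, combining injectivity on small balls with the lifting theorem shows that $\mathcal{Z}$ restricts to a homeomorphism from a neighborhood of each $\sigma$ onto an open subset of $\Hom(\Lambda,\C)$. These restrictions serve as charts; their transition maps are restrictions of the identity of $\Hom(\Lambda,\C)$ and hence holomorphic, so $\Stab(X)$ acquires the structure of a complex manifold of dimension $n=\rk\Lambda$.
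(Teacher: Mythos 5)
Your proposal is correct and follows essentially the same route as the proof this paper relies on: the theorem is stated as Bridgeland's foundational result with a citation to \cite{bridgeland1}, and that argument is precisely your generalized metric on $\Stab(X)$, the rigidity/injectivity lemma for two stability conditions sharing a central charge, and the deformation (lifting) theorem obtained by refining the slicing on thin quasi-abelian slices $\mathcal{P}((a,b])$. Your invocation of the support property to keep the perturbed Harder--Narasimhan phases under control is the standard modern strengthening of Bridgeland's local-finiteness argument, and it is exactly what yields a local homeomorphism onto all of $\Hom(\Lambda,\C)$ as formulated here.
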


We now give a description of the wall and chamber structure of $\Stab(X)$.
\begin{defin} Let $v$ and $w$ be two linearly independent vectors in $\Lambda$. A \emph{(numerical) wall} $\mathcal{W}_{v,w}$ for $v$ with respect to $w$ is the set
\[
\{\sigma=(\mathcal{P},Z) \in \Stab(X) \ | \ \Im Z(v) \Re Z(w) = \Im Z(w) \Re Z(v) \};
\]
by Theorem \ref{bridgeland}, a wall is a real submanifold of $\Stab(X)$ of real codimension 1.
\end{defin}

Not all numerical walls are relevant to the wall and chamber structure:
\begin{prop} \label{propactualwalls} Let $v \in \Lambda$ and $S\subset D^b(X)$ be any subset of objects of class $v$; then there exists a collection of walls $\{W^S_{v,w}\}$ with the following properties:
\begin{itemize}
\item[(i)] The collection $\{W^S_{v,w}\}$ is locally finite;
\item[(ii)] For every $\sigma=(\mathcal{P},Z)$ lying on some $W^S_{v,w}$, there exists a phase $\phi$ and an exact sequence $0\to F \to E \to Q \to 0$ of semistable objects in $\mathcal{P}(\phi)$ with $\ch(F)=w$ and $E\in S$;
\item[(iii)] If $\sigma_1$ and $\sigma_2$ lie in the same connected component of the complement of $\cup W^S_{v,w}$, then $E \in S$ is $\sigma_1$-stable if and only if it is $\sigma_2$-stable.
\end{itemize}
In particular, the stability of a fixed object is an open condition in $\Stab(X)$
\end{prop}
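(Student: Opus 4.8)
The plan is to take $\{W^S_{v,w}\}$ to be exactly those numerical walls $\mathcal{W}_{v,w}$ along which some object of $S$ genuinely degenerates, and then to verify (ii), (i), (iii) in that order, since (ii) holds by construction, (i) carries the real content, and (iii) is a formal consequence. Precisely, I would declare $\mathcal{W}_{v,w}$ to be a wall $W^S_{v,w}$ whenever there exist $\sigma=(\mathcal{P},Z)$ lying on $\mathcal{W}_{v,w}$, an object $E\in S$, and a short exact sequence $0\to F\to E\to Q\to 0$ in the heart $\mathcal{A}=\mathcal{P}(0,1]$ with $F,E,Q$ semistable of a common phase $\phi$ and $\ch(F)=w$. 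With this definition property (ii) is immediate.

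For local finiteness, fix $\sigma_0=(\mathcal{P}_0,Z_0)$ and a compact neighbourhood $K$, and bound the number of classes $w$ indexing a wall that meets $K$. The support property is the key input: after shrinking $K$, I may use a single quadratic form $Q$ that is negative definite on $\ker Z$ for every $\sigma=(\mathcal{P},Z)\in K$ and satisfies $Q(\ch E)\geq 0$ for all $\sigma$-semistable $E$ with $\sigma\in K$; negative-definiteness is an open condition, so one form suffices once $K$ is small enough. Now suppose $W^S_{v,w}$ meets $K$ at some $\sigma=(\mathcal{P},Z)$, with destabilizing sequence $0\to F\to E\to Q'\to 0$ in which $F$ and $Q'=E/F$ are $\sigma$-semistable of the same phase as $E$. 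Then $Q(\ch F)\geq 0$ and $Q(\ch Q')\geq 0$, so both $w$ and $v-w$ lie in $\{Q\geq 0\}$; and equal phase together with additivity of $Z$ forces $Z(F)$ and $Z(Q')$ onto the ray through $Z(E)$, so $Z(w)=s\,Z(v)$ for some $s\in[0,1]$, whence $|Z(w)|\leq|Z(v)|$.

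The finiteness now follows from a boundedness argument which I regard as the crux. For fixed $\sigma=(\mathcal{P},Z)\in K$ and fixed value $z$, the fibre $\{u\in\Lambda\otimes\R \mid Z(u)=z\}$ is a coset of $\ker Z$, and the restriction of $Q$ to it is a negative definite quadratic polynomial; hence $\{Q\geq 0\}$ meets it in a bounded ellipsoidal region. Since $z=Z(w)$ is confined to the disc $|z|\leq|Z(v)|$ and $\sigma$ ranges over the compact set $K$, these regions stay inside one bounded subset of $\Lambda\otimes\R$, which therefore contains only finitely many lattice points $w$. As each $w$ cuts out a single numerical wall, only finitely many $W^S_{v,w}$ meet $K$, proving (i). The step I expect to require the most care is exactly this uniformity over $K$: the kernels $\ker Z$ and the form $Q$ vary with $\sigma$, and one must check via continuity of the forgetful map $\mathcal{Z}$ and compactness of $K$ that the ellipsoidal regions admit a single uniform bound.

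For (iii) I would argue by contradiction. If $\sigma_1,\sigma_2$ lie in one connected component of the complement of $\bigcup W^S_{v,w}$ but some $E\in S$ is $\sigma_1$-stable and $\sigma_2$-unstable, then along a path joining them inside the chamber there is a first parameter $\sigma_\ast$ at which $E$ ceases to be stable; at $\sigma_\ast$ the object $E$ is strictly semistable, so a maximal destabilizing subobject $F$ shares the phase of $E$ and produces a class $w=\ch(F)$ together with a destabilizing sequence, whence $\sigma_\ast\in W^S_{v,w}$ — contradicting that the path stays in a single chamber. This gives (iii); finally, the closing assertion that stability of a fixed object is open is the special case $S=\{E\}$, where $E$ remains stable on the whole chamber of the open complement containing any stable point.
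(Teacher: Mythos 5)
The paper itself gives no proof of this Proposition --- it is quoted as background from the survey \cite{macri2} --- so your argument must stand on its own. Your overall architecture is the standard one (support property plus compactness to bound the possible destabilizing classes, phase alignment to confine walls to numerical walls, and a chamber-crossing argument for (iii)), but as written it has genuine gaps.

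The central one is your definition of the walls. You declare the \emph{whole} numerical wall $\mathcal{W}_{v,w}$ to be one of the $W^S_{v,w}$ as soon as \emph{one} of its points carries an equal-phase destabilizing sequence, and then call (ii) immediate. It is not: (ii) requires such a sequence at \emph{every} point of $W^S_{v,w}$, and propagating a destabilizing sequence from one point of a numerical wall to all of it is precisely the nontrivial content of statements like Theorem \ref{structureofwalls}(vi) --- a theorem about $(\alpha,\beta)$-slices on surfaces, not a formal consequence of your definition in all of $\Stab(X)$. The same mismatch breaks your proof of (i): when $W^S_{v,w}$ meets your compact set $K$, your definition only supplies a destabilizing sequence at \emph{some} point of $\mathcal{W}_{v,w}$, possibly far from $K$, where the single quadratic form you fixed on $K$ gives no information; the inequalities $Q(\ch(F))\geq 0$ and $|Z(w)|\leq |Z(v)|$ are then unavailable. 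The repair is to define $W^S_{v,w}$ as the locus of points at which a destabilizing sequence actually exists --- this makes (ii) true by construction and restores your bound in (i) --- but then you owe a proof that these loci are closed, since otherwise the complement need not be open and the ``connected components'' in (iii) are not chambers; that closedness (limits of destabilizing sequences with varying $E_n$ and $F_n$) is where the real technical work lives.

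Two further gaps. First, your uniform support property on $K$ is justified only by ``negative definiteness is an open condition''. That yields $Q$ negative definite on $\ker Z$ for $Z$ near $Z_0$, but not the other half: $Q(\ch(E))\geq 0$ for objects semistable with respect to the \emph{nearby} stability conditions --- a priori each $\sigma\in K$ only comes with its own form $Q_\sigma$. That a single $Q$ serves on a whole neighborhood is the content of the deformation theorem in its support-property formulation (Bayer's refinement of Bridgeland's theorem) and must be invoked as such. Second, in (iii) you take the first parameter $\sigma_\ast$ where $E$ ``ceases to be stable'' and assert $E$ is strictly semistable there. This silently uses that semistability is a closed condition along the path (true, but itself a theorem, proved via comparison of mass functions at nearby stability conditions), and it omits the case where $E$ is still stable at $\sigma_\ast$ but unstable at parameters immediately past it: then there is no equal-phase destabilizing sequence at $\sigma_\ast$, and to land on a wall you must extract a constant class $w$ from the destabilizing subobjects at nearby parameters (again via the uniform support property and a mass bound) and show it produces an equal-phase destabilization at $\sigma_\ast$ itself --- the delicate step in Bridgeland's original argument, which cannot be waved through.
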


\begin{defs} Walls of this kind are called \emph{actual} walls for the set $S$. A \emph{chamber} is a connected component of the complement of the set of actual walls.
\end{defs}

\begin{rmk} \label{remarkwalls} The moduli space $\M_\sigma(v)$ of $S$-equivalence classes of Bridgeland (semi)stable objects of class $v$ with respect to a stability condition $\sigma$ is constant inside a chamber and when the coarse moduli exists it is always proper and separated (see \cite{macri2}). Letting $\M_\sigma(v)$ varying along a path in $\Stab(X)$ that crosses a wall at some point $\sigma_0$ corresponds to a birational surgery, and the two moduli spaces on $\M^+$ and $\M^-$ on either side of the wall con be seen as projective bundles over the space $\M^0$ at the wall in a small neighborhood of the wall; indeed, we can take the objects $F$ and $Q$ given by Proposition \ref{propactualwalls} (ii) to be stable at the wall (it's enough for one of the two to be), so that they will be stable in a small open set and the semistable objects in $\M^{\pm}$ after the wall crossing are obtained by taking extensions of $F$ and $Q$ in the other direction.
\end{rmk}

Now we turn to our attention to some results that only hold for surfaces, but first we need a few definitions.

\begin{defs} Let $B \in \NS(X)$ be the class of a divisor; the \emph{$B$-twisted Chern character} of an object $E$ is formally defined as $\ch^B(E):= \ch(E)\cdot e^{-B}$. In particular, on surfaces we have
\begin{align*}
\ch_0^B&(E)=\ch_0(E), \qquad \; \ch_1^B(E)= \ch_1(E)-B\ch_0(E), \\ &\ch_2^B(E)= \ch_2(E)-B\ch_1(E)+\dfrac{B^2}{2}\ch_0(E).
\end{align*}
Let also $\omega$ be an ample class; the associated \emph{$B$-twisted Mumford slope} (when defined) is given by
\[
\mu_{\omega,B}(E):= \dfrac{\omega\ch_1^B(E)}{\omega^2\ch_0^B(E)}.
\]
\end{defs}

For fixed $\omega$ and $B$ we define two full subcategories of $\Coh(X)$ as follows:
\begin{align*}
&\mathcal{T}^{\omega,\beta}=\{E \in \Coh(X) \ | \ E \text{ is torsion or }\forall \ E \twoheadrightarrow Q\neq 0, \ \mu_{\omega,B}(Q)>0\} \\
&\mathcal{F}^{\omega,\beta}=\{E \in \Coh(X) \ | \ \forall \ 0\neq F \subset E, \  \mu_{\omega,B} \leq 0\}.
\end{align*}
Using properties of the classical Mumford stability one easily checks that $(\mathcal{F}^{\omega,\beta}, \mathcal{T}^{\omega,\beta})$ is a \emph{torsion pair} for $\Coh(X)$, i.e. $\Hom(\mathcal{T}^{\omega,\beta}, \mathcal{F}^{\omega,\beta}) =0$ and for all objects $E \in \Coh(X)$ there exists a unique short exact sequence
\[
0 \to T \to E \to F \to 0
\]
with $T\in \mathcal{T}^{\omega,\beta}$ and $F \in \mathcal{F}^{\omega,\beta}$.

We can then construct the extension-closure category
\[
\Coh^{\omega,B}(X):= \langle \mathcal{F}^{\omega,\beta}[1], \mathcal{T}^{\omega,\beta}\rangle,
\]
called the \emph{tilt with respect to the torsion pair}, that has also the following equivalent description
\[
\Coh^{\omega,B}(X)=\{ E \in D^b(X) \ | \ H^i(E)=0 \text{ for }i\neq 0,-1, \; H^{-1}(E)\in \mathcal{F}^{\omega,\beta}, \; H^0(E)\in \mathcal{T}^{\omega,\beta} \}.
\]
Using general results about tilting and torsion pairs from \cite{happelsmalo} we get the following Proposition:
\begin{prop} The category $\Coh^{\omega,B}(X)$ is the heart of a bounded $t$-structure in $D^b(X)$; in particular, $\Coh^{\omega,B}(X)$ is abelian.
\end{prop}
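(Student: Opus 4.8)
The plan is to exhibit $\Coh^{\omega,B}(X)$ as the heart of an explicit bounded $t$-structure obtained by tilting the standard one, and then to invoke the general fact that the heart of a bounded $t$-structure is abelian. Writing $\mathcal{T}:=\mathcal{T}^{\omega,\beta}$, $\mathcal{F}:=\mathcal{F}^{\omega,\beta}$, and letting $H^i$ denote the cohomology functors of the standard $t$-structure on $D^b(X)$ (whose heart is $\Coh(X)$), I would set
\[
\mathcal{D}^{\leq 0} := \{ E \mid H^i(E) = 0 \ (i > 0), \ H^0(E) \in \mathcal{T}\},
\]
\[
\mathcal{D}^{\geq 1} := \{ E \mid H^i(E) = 0 \ (i < 0), \ H^0(E) \in \mathcal{F}\},
\]
with $\mathcal{D}^{\leq n}:=\mathcal{D}^{\leq 0}[-n]$ and $\mathcal{D}^{\geq n}:=\mathcal{D}^{\geq 1}[1-n]$. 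The claim is that $(\mathcal{D}^{\leq 0},\mathcal{D}^{\geq 0})$ is a bounded $t$-structure. Its heart $\mathcal{D}^{\leq 0}\cap\mathcal{D}^{\geq 0}$ consists exactly of the objects with $H^i=0$ for $i\neq 0,-1$, $H^0\in\mathcal{T}$ and $H^{-1}\in\mathcal{F}$, which is the cohomological description given in the statement; that this coincides with $\langle\mathcal{F}[1],\mathcal{T}\rangle$ follows because any such $E$ sits in a triangle $H^{-1}(E)[1]\to E\to H^0(E)$ realizing it as an extension of an object of $\mathcal{T}$ by one of $\mathcal{F}[1]$, while the extension-closure of objects in the heart stays in the heart.

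The nesting axiom $\mathcal{D}^{\leq 0}\subset\mathcal{D}^{\leq 1}$ is immediate from the cohomological description, so the first substantial point is orthogonality, $\Hom(A,B)=0$ for $A\in\mathcal{D}^{\leq 0}$ and $B\in\mathcal{D}^{\geq 1}$. I would reduce this to the standard $t$-structure: truncating $A$ via $\tau^{\leq -1}A\to A\to H^0(A)$ kills the contribution of $\tau^{\leq -1}A$, which lives in degrees $\leq -1$ while $B$ lives in degrees $\geq 0$; truncating $B$ via $H^0(B)\to B\to\tau^{\geq 1}B$ then leaves only $\Hom(H^0(A),H^0(B))$, which vanishes because $H^0(A)\in\mathcal{T}$, $H^0(B)\in\mathcal{F}$ and $\Hom(\mathcal{T},\mathcal{F})=0$ by the torsion-pair property.

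The crux, and the step I expect to be the main obstacle, is producing for each $E\in D^b(X)$ a triangle $A\to E\to B\to A[1]$ with $A\in\mathcal{D}^{\leq 0}$ and $B\in\mathcal{D}^{\geq 1}$. I would build it from the standard truncation $\tau^{\leq 0}E\to E\to\tau^{\geq 1}E$ together with the torsion-pair sequence $0\to T\to H^0(E)\to F\to 0$ in $\Coh(X)$. Letting $A$ be the shifted cone of the composite $\tau^{\leq 0}E\to H^0(E)\twoheadrightarrow F$, a cohomology computation gives $H^i(A)=H^i(E)$ for $i<0$, $H^0(A)=T\in\mathcal{T}$ and $H^{>0}(A)=0$, so $A\in\mathcal{D}^{\leq 0}$. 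Applying the octahedral axiom to $A\to\tau^{\leq 0}E\to E$ yields the desired triangle $A\to E\to B$, with $B$ fitting in a triangle $F\to B\to\tau^{\geq 1}E$; hence $H^0(B)=F\in\mathcal{F}$, $H^i(B)=H^i(E)$ for $i\geq 1$, and $H^{<0}(B)=0$, so $B\in\mathcal{D}^{\geq 1}$.

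Boundedness is inherited from the standard $t$-structure: every $E\in D^b(X)$ has $H^i(E)=0$ for $|i|\gg 0$ and is therefore squeezed into some $\mathcal{D}^{\leq n}\cap\mathcal{D}^{\geq -n}$. Finally, that the heart of a bounded $t$-structure is abelian is a general theorem, which gives the last assertion. Indeed, all of the above is precisely the content of the Happel--Reiten--Smal\o{} tilting theorem, so once the torsion pair has been verified one may simply quote \cite{happelsmalo}; I have only sketched the verification of the $t$-structure axioms to make the cohomological description transparent.
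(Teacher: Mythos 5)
Your proposal is correct and takes the same route as the paper: the paper's entire proof is an appeal to the Happel--Reiten--Smal\o{} tilting theorem of \cite{happelsmalo} applied to the torsion pair $(\mathcal{T}^{\omega,\beta},\mathcal{F}^{\omega,\beta})$, which is exactly what you invoke at the end. The only difference is that you spell out the verification of the $t$-structure axioms (orthogonality, the truncation triangle via the octahedron, boundedness) that the citation subsumes, and that verification is accurate.
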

Now we explicitly define a stability function on the tilted heart as
\[
Z_{\omega,B}(E) := -\left(\ch_2^B(E)-\dfrac{\omega^2}{2}\ch_0^B(E)\right) + i \omega \ch_1^B(E),
\]
with corresponding slope function
\[
\nu_{\omega,B}(E) = \dfrac{\ch_2^B(E)-\dfrac{\omega^2}{2}\ch_0^B(E)}{\omega \ch_1^B(E)}.
\]
This leads us to the main Theorem regarding surfaces
\begin{thm} Let $X$ be a smooth projective surface; for all $\omega$ and $B$ the pair $\sigma_{\omega,B}= (\Coh^{\omega,B},Z_{\omega,B})$ defines a stability condition on $X$. Moreover the map
\[ 
\xymatrix@R-2pc{\Amp(X) \times \NS(X) \ar[r]& \Stab(X) \\ (\omega,B) \ar@{|->}[r] & \sigma_{\omega,B}}
\]
is a continuous embedding.
\end{thm}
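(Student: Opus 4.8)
The plan is to verify the three defining properties of a Bridgeland stability condition for the pair $\sigma_{\omega,B}=(\Coh^{\omega,B}(X),Z_{\omega,B})$ --- namely that $Z_{\omega,B}$ is a stability function on the tilted heart, that Harder--Narasimhan filtrations exist, and that the support property holds --- and then to deduce the embedding statement from Theorem~\ref{bridgeland}.

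First I would check the stability-function axiom. Since $\Im Z_{\omega,B}(E)=\omega\,\ch_1^B(E)$, it suffices to test positivity on the two pieces of the tilt. For $E\in\mathcal{T}^{\omega,B}$ the defining condition forces $\omega\,\ch_1^B(E)\ge 0$ (a torsion sheaf has effective $\ch_1$, and a torsion-free object all of whose quotients have positive slope satisfies $\mu_{\omega,B}(E)>0$), while for $F\in\mathcal{F}^{\omega,B}$ one has $\omega\,\ch_1^B(F)\le 0$, so the shift $F[1]$, for which $\ch(F[1])=-\ch(F)$, satisfies $\omega\,\ch_1^B(F[1])\ge 0$. As $\Coh^{\omega,B}(X)$ is generated under extensions by objects of these two types, additivity of $\Im Z_{\omega,B}$ gives $\Im Z_{\omega,B}\ge 0$ on the whole heart. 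The delicate point is the boundary case $\Im Z_{\omega,B}(E)=0$: here both contributions must vanish, which forces the $\mathcal{T}$-part of $E$ to be a $0$-dimensional torsion sheaf and its $\mathcal{F}$-part to be a $\mu_{\omega,B}$-semistable sheaf of slope exactly $0$; for the former $\Re Z_{\omega,B}=-\ch_2<0$, and for the latter I would combine the Hodge index theorem (which gives $(\ch_1^B)^2\le 0$ when $\omega\,\ch_1^B=0$) with the classical Bogomolov inequality $\ch_1^2-2\ch_0\ch_2\ge 0$ to conclude $\ch_2^B<\tfrac{\omega^2}{2}\ch_0^B$, i.e. $\Re Z_{\omega,B}(E)<0$.

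Next I would establish Harder--Narasimhan filtrations. Because $\ch_1$ takes values in the lattice $\NS(X)$ and $\ch_0\in\Z$, the imaginary parts $\omega\,\ch_1^B$ occurring along any candidate destabilizing chain are confined to a discrete set bounded below; together with the fact that $\Coh^{\omega,B}(X)$ is Noetherian, this excludes infinite chains of strictly increasing (resp.\ decreasing) phase, which is precisely Bridgeland's criterion for the existence of HN filtrations. I expect the genuinely hard step to be the support property. Here I would take $Q(E)=\ch_1(E)^2-2\ch_0(E)\ch_2(E)$, the Bogomolov discriminant, which is invariant under $B$-twisting. On $\ker Z_{\omega,B}$ one has $\omega\,\ch_1^B=0$ and $\ch_2^B=\tfrac{\omega^2}{2}\ch_0^B$, so $Q=(\ch_1^B)^2-\omega^2(\ch_0^B)^2$ is negative definite by the Hodge index theorem; the remaining content is the inequality $Q(E)\ge 0$ for every $\sigma_{\omega,B}$-semistable $E$. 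This amounts to upgrading the Bogomolov inequality from Mumford-semistable sheaves to tilt-semistable objects, which I would carry out by comparing $\nu_{\omega,B}$-stability of $E$ with $\mu_{\omega,B}$-stability of its cohomology sheaves $H^0(E)$ and $H^{-1}(E)$ and inducting on the length of the relevant filtrations.

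Finally, for the embedding statement, note that $(\omega,B)\mapsto Z_{\omega,B}$ is a real-analytic, hence continuous, map from the finite-dimensional manifold $\Amp(X)\times\NS(X)$ into $\Hom(\Lambda,\C)$, and it is injective: the imaginary part recovers the functional $\omega\,\ch_1^B$, and hence $\omega$ and the number $\omega\cdot B$, while the real part then pins down the remaining data, so that $(\omega,B)$ is determined by $Z_{\omega,B}$. Writing $\Phi$ for the map $(\omega,B)\mapsto\sigma_{\omega,B}$, we have that $\mathcal{Z}\circ\Phi$ equals this injection; since $\mathcal{Z}$ is a local homeomorphism by Theorem~\ref{bridgeland}, composing locally with a continuous inverse of $\mathcal{Z}$ shows that $\Phi$ is continuous, and injectivity of $\mathcal{Z}\circ\Phi$ forces injectivity of $\Phi$. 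These facts together with the local-homeomorphism property of $\mathcal{Z}$ give that $\Phi$ is a homeomorphism onto its image, i.e.\ a continuous embedding.
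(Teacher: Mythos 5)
The paper never proves this theorem: it is one of the results quoted from the literature (the reader is referred to \cite{macri2}), so your proposal can only be compared with the standard argument found there. Your first three steps do follow that standard route, and the computations you indicate are correct: positivity of $Z_{\omega,B}$ on the tilted heart reduces, exactly as you say, to the $0$-dimensional torsion case and to slope-$0$ semistable sheaves handled by Hodge index plus the classical Bogomolov inequality; and the discriminant $\ch_1^2-2\ch_0\ch_2$ is indeed negative definite on $\ker Z_{\omega,B}$ by Hodge index, with the generalized Bogomolov inequality for tilt-semistable objects (Proposition \ref{bogomolov} in the paper, also quoted without proof) supplying the other half of the support property.

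There is, however, a genuine gap in your final step. From the facts that $\mathcal{Z}$ is a local homeomorphism and that $\mathcal{Z}\circ\Phi$ is continuous and injective you \emph{cannot} conclude that $\Phi$ is continuous: a map into the source of a local homeomorphism can jump between ``sheets'' while its composition with $\mathcal{Z}$ stays continuous. (Model example: let $\mathcal{Z}$ be the identity on each of two disjoint copies of $\R$, and let $\Phi$ send $x<0$ into the first copy and $x\geq 0$ into the second; then $\mathcal{Z}\circ\Phi$ is the identity on $\R$, yet $\Phi$ is discontinuous.) Your phrase ``composing locally with a continuous inverse of $\mathcal{Z}$'' presupposes that $\sigma_{\omega',B'}$ for $(\omega',B')$ near $(\omega,B)$ lies in the neighborhood of $\sigma_{\omega,B}$ on which $\mathcal{Z}$ is invertible --- but that is precisely the continuity you are trying to establish. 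The genuine content of the embedding statement is a comparison of the stability conditions themselves, not just of their central charges: one must show that as $(\omega',B')\to(\omega,B)$, the $\sigma_{\omega',B'}$-Harder--Narasimhan factors of any $\sigma_{\omega,B}$-semistable object have phases converging to the original phase, so that the slicings converge in Bridgeland's metric; this, combined with the uniqueness clause in Bridgeland's deformation theorem (two stability conditions with the same central charge and slicings at distance less than $1$ coincide), rules out sheet-jumping and yields continuity. Similarly, injectivity of $\mathcal{Z}\circ\Phi$ gives injectivity of $\Phi$, but being a homeomorphism onto the image again needs the metric comparison, not just formal topology.

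A second, smaller gap is in your Harder--Narasimhan step: for irrational $\omega$ or $B$ (and the theorem is stated for arbitrary real ample $\omega$), the image of $\omega\,\ch_1^B$ on the lattice $\Lambda$ is dense in $\R$, not discrete, so Bridgeland's ``noetherian heart plus discrete image'' criterion does not apply as stated. One either proves the theorem first for rational classes and extends by deformation, or uses the refined HN existence criterion of \cite{macri2}; note also that noetherianity of $\Coh^{\omega,B}(X)$, which you invoke, is itself a nontrivial input that must be cited or proved.
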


The support property for $\sigma_{\omega,B}$ is given by the following quadratic form
\[
\Delta_{\omega,B}(E) = (\omega\ch_1^B(E))^2 - 2\omega^2\ch_0^B(E)\ch_2^B(E),
\]
which is easily proven to be independent of $B$ by expansion, so we can always take $B=0$ when computing it; hence we have an inequality for semistable objects, called the \emph{Bogomolov inequality}:

\begin{prop} \label{bogomolov} Let $E$ be a semistable object with respect to the stability condition $\sigma_{\omega, B}$; then $\Delta_{\omega,B}(E)=\Delta_{\omega}(E)\geq 0$.
\end{prop}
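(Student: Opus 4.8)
The plan is to prove the effective content of the statement, namely the inequality $\Delta_{\omega,B}(E)\geq 0$ for every $\sigma_{\omega,B}$-semistable object $E$, by reducing it to the classical Bogomolov inequality for Mumford-semistable torsion-free sheaves. I would deal with the $B$-independence first: expanding $\ch^B(E)=\ch(E)\cdot e^{-B}$ one checks that the cross terms cancel in $\overline\Delta(E):=\ch_1(E)^2-2\ch_0(E)\ch_2(E)$, so this discriminant is unchanged by the twist and may be computed at $B=0$; this is the expansion referred to in the statement. The bridge to $\Delta_{\omega,B}$ is the elementary identity
\[
\Delta_{\omega,B}(E)=\omega^2\,\overline\Delta(E)+\Big((\omega\,\ch_1^B(E))^2-\omega^2\,(\ch_1^B(E))^2\Big),
\]
in which the bracketed term is nonnegative by the Hodge index theorem, since $\omega$ is ample. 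Thus for a $\mu_{\omega,B}$-semistable torsion-free sheaf $F$ the classical Bogomolov inequality $\overline\Delta(F)\geq 0$ gives at once $\Delta_{\omega,B}(F)\geq 0$; and for a torsion sheaf $T$ one has $\ch_0(T)=0$, so that $\Delta_{\omega,B}(T)=(\omega\,\ch_1(T))^2\geq 0$ is a square. Note that one cannot aim for $\overline\Delta(E)\geq 0$ in general (it fails already for the structure sheaf of a rational curve of negative self-intersection); it is genuinely the $\omega$-weighted form that stays nonnegative. These are the two building blocks.

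It then remains to propagate the inequality from these sheaves to an arbitrary $\sigma_{\omega,B}$-semistable $E\in\Coh^{\omega,B}(X)$. Since the inequality depends only on the numerical class, it suffices to treat $\sigma_{\omega,B}$-stable objects, the strictly semistable case following from the stable one applied to the Jordan–Hölder factors. For $E$ stable, its cohomology sheaves sit in the short exact sequence $0\to H^{-1}(E)[1]\to E\to H^0(E)\to 0$ of $\Coh^{\omega,B}(X)$, with $A:=H^{-1}(E)\in\mathcal{F}^{\omega,B}$ and $T:=H^0(E)\in\mathcal{T}^{\omega,B}$, so that $\ch(E)=\ch(T)-\ch(A)$ in $K$-theory. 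I would then replace $A$ and $T$ by their Harder–Narasimhan and Jordan–Hölder factors for the slope $\mu_{\omega,B}$: each factor is $\mu_{\omega,B}$-semistable, hence falls into one of the two building blocks above, the torsion-free factors satisfying $\Delta_{\omega,B}\geq 0$ via Bogomolov and Hodge index and the torsion factors contributing a square.

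The main obstacle is precisely this final assembly, because $\Delta_{\omega,B}$ is a quadratic form and hence not additive along exact sequences: expanding $\Delta_{\omega,B}(E)$ in terms of the factors produces mixed terms that must be shown not to spoil positivity. The mechanism is that the torsion pair imposes definite signs after pairing with $\omega$, namely that the $\mu_{\omega,B}$-semistable factors of $A\in\mathcal{F}^{\omega,B}$ satisfy $\omega\,\ch_1^B\leq 0$ while those of $T\in\mathcal{T}^{\omega,B}$ satisfy $\omega\,\ch_1^B\geq 0$; combined once more with the Hodge index theorem on $\omega^\perp$, these constraints bound each cross term from below and force $\Delta_{\omega,B}(E)\geq 0$. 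Keeping track of these cross terms, together with the degenerate cases $\ch_0^B(E)=0$ and $E$ a genuine two-term complex, is where the real work lies, while the reductions leading up to it are formal. This is the argument of Bridgeland and of Arcara–Bertram, summarized in \cite{macri2}.
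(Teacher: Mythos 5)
Your first paragraph is essentially correct and matches the base case of the standard argument: the untwisted discriminant $\overline\Delta(E)=\ch_1(E)^2-2\ch_0(E)\ch_2(E)$ is indeed invariant under the $B$-twist, your bridge identity relating $\Delta_{\omega,B}$ to $\omega^2\overline\Delta$ is correct, and together with the Hodge index theorem and the classical Bogomolov inequality it gives $\Delta_{\omega,B}\geq 0$ for $\mu_{\omega,B}$-semistable torsion-free sheaves and for torsion sheaves. (Note the paper itself offers no proof of this Proposition; it is quoted from the literature, where these sheaves are precisely the objects appearing in the large volume limit.)

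The gap is in the propagation step, and it is fatal as stated. The mechanism you propose --- decompose a tilt-stable $E$ via $0\to H^{-1}(E)[1]\to E\to H^0(E)\to 0$, pass to $\mu_{\omega,B}$-HN/JH factors of the two cohomology sheaves, and control the cross terms using only the torsion-pair sign constraints on $\omega\ch_1^B$ together with the Hodge index theorem --- never uses the tilt-stability of $E$ after you invoke it; it uses only membership in the heart. If it worked, it would prove $\Delta_{\omega,B}\geq 0$ for \emph{every} object of $\Coh^{\omega,B}(X)$, which is false. Concretely, on $X=\P^1\times\P^1$ with $H=(1,1)$, $\beta=0$, take $E=\mathcal{I}_Z(1,1)\oplus\O(-1,-1)^{\oplus 2}[1]$ with $Z$ of length $9$. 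Then $H^0(E)=\mathcal{I}_Z(1,1)\in\mathcal{T}$ is $\mu$-stable with $\overline\Delta_H=(H\ch_1)^2-2H^2\ch_0\ch_2=4+32=36\geq 0$, and $H^{-1}(E)=\O(-1,-1)^{\oplus 2}\in\mathcal{F}$ is $\mu$-semistable with $\overline\Delta_H=16-16=0$, and all your sign constraints hold; yet $\ch(E)=(-1,(3,3),-10)$ gives $\overline\Delta_H(E)=36-40=-4<0$. So under your hypotheses the cross terms genuinely can spoil positivity; what excludes this for semistable objects is stability itself, and it must enter dynamically, not through numerics of the cohomology sheaves. The standard proof (Bridgeland for K3 surfaces, Arcara--Bertram in general, as summarized in the survey the paper cites) deforms the stability condition: increase $\alpha$ along the slice; either $E$ stays semistable into the large-volume chamber, where it becomes one of your building blocks (or a shift of one), or it is destabilized at a wall, where its Jordan--H\"older factors have the \emph{same} tilt-slope and strictly smaller $H\ch_1^\beta$, allowing an induction on that non-negative integer (using local finiteness of walls). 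Cross-term positivity is then available precisely because the factors have equal phase: for classes with $Z_{\alpha,\beta}$ on a common ray, non-negativity of the quadratic form on each factor plus its negative definiteness on $\ker Z_{\alpha,\beta}$ forces the mutual pairings to be non-negative. This same-phase convexity lemma --- not the torsion-pair signs --- is the missing ingredient; it is also what is needed to make rigorous your opening reduction from semistable objects to stable ones via Jordan--H\"older factors, which as written has the same non-additivity problem.
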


From now on we fix some ample divisor $H$ and we focus on the family of stability conditions of the kind $\sigma_{\omega,B}$ given by $\omega=\alpha H$ for $\alpha>0$ and $B=\beta H$ for $\beta\in \R$, to which we will refer as the \emph{$(\alpha,\beta)$-plane} (or \emph{slice}) corresponding to $H$; we will also use the subscript $(\alpha,\beta)$ in place of $(\omega, B)$ wherever necessary, and introduce the following compact notation which that be useful in computations:
\[
H\cdot \ch^\beta(F) = (H^2\ch_0^\beta(F), \ H\ch_1^\beta(F),\ \ch_2^\beta(F)).
\]
Notice also that by definition the category $\Coh^{\alpha,\beta}(X)$ is independent of $\alpha$, so that we will refer to it as just $\Coh^\beta(X)$; also one has $\Delta_{\alpha H}(E)=\alpha^2\Delta_{H}(E)$, meaning that $\alpha$ does not affect the Bogomolov Inequality either.

An immediate consequence of the definition of $\Coh^\beta(X)$ using Proposition \ref{propactualwalls} is the following Lemma on the numerics of the objects involved in an actual wall, which we will use repeatedly in the next section:

\begin{lem} \label{bounds} Let $0\to F \to E \to Q \to 0$ be a sequence defining a wall in $\Coh^\beta(X)$ for an object $E$ with $H\ch_1^\beta(E)>0$; then $0 < H\ch_1^\beta(F) < H\ch_1^\beta(E)$ and $0 < H\ch_1^\beta(Q) < H\ch_1^\beta(E)$.
\end{lem}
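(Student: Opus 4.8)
The plan is to read everything off from the imaginary part of the central charge together with the additivity of $Z$, using the wall hypothesis only to upgrade the resulting non-strict bounds to strict ones. For $\omega=\alpha H$ and $B=\beta H$ one has $\Im Z_{\alpha,\beta}(G)=\alpha\,H\ch_1^\beta(G)$, so the positivity axiom of a stability function (that $\Im Z(G)\ge 0$ for every $0\neq G$ in the heart) translates directly into $H\ch_1^\beta(G)\ge 0$ for every nonzero $G\in\Coh^\beta(X)$. Since $F$ and $Q$ are the sub and the quotient in a short exact sequence in the abelian category $\Coh^\beta(X)$, they both lie in the heart, and therefore $H\ch_1^\beta(F)\ge 0$ and $H\ch_1^\beta(Q)\ge 0$.

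Next I would invoke additivity. As $Z_{\alpha,\beta}$ is a group homomorphism on $K_0$, its imaginary part is additive on short exact sequences, so $H\ch_1^\beta(E)=H\ch_1^\beta(F)+H\ch_1^\beta(Q)$. Combined with the two nonnegativity statements above and with the hypothesis $H\ch_1^\beta(E)>0$, this already yields $0\le H\ch_1^\beta(F)\le H\ch_1^\beta(E)$ and likewise for $Q$; what remains is to exclude the boundary cases in which one of the two summands vanishes.

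This is exactly where the wall hypothesis enters, and it is the only genuinely non-formal step. By Proposition \ref{propactualwalls}(ii) the sequence $0\to F\to E\to Q\to 0$ is a sequence of semistable objects of one common phase $\phi$. Since $\Im Z_{\alpha,\beta}(E)=\alpha\,H\ch_1^\beta(E)>0$, the object $E$ has phase $\phi\in(0,1)$, strictly below $1$. If $H\ch_1^\beta(F)$ were $0$, then $\Im Z_{\alpha,\beta}(F)=0$, and since $F\neq 0$ the second clause of the stability-function axiom (namely $\Re Z(F)<0$) would force $F$ to have phase exactly $1$, contradicting $\phi<1$; hence $H\ch_1^\beta(F)>0$, and by the symmetric argument $H\ch_1^\beta(Q)>0$. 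Feeding these strict positivities back into the additivity relation gives $0<H\ch_1^\beta(F)<H\ch_1^\beta(E)$ and $0<H\ch_1^\beta(Q)<H\ch_1^\beta(E)$.

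The main obstacle is conceptual rather than computational: nonnegativity of $H\ch_1^\beta$ and additivity alone deliver only the non-strict bounds, and one must use that on an actual wall the destabilizing sub and quotient share the phase of $E$ (which is strictly less than $1$ precisely because $H\ch_1^\beta(E)>0$) in order to rule out a summand of phase $1$ with vanishing imaginary part.
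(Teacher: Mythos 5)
Your proof is correct and follows essentially the same route as the paper's: nonnegativity of $H\ch_1^\beta$ on the heart, additivity across the short exact sequence, and then using the wall hypothesis to rule out the boundary case where a sub or quotient has vanishing $H\ch_1^\beta$. The only cosmetic difference is that you phrase that last step via phases (a nonzero object with $\Im Z=0$ would have phase exactly $1$, contradicting the common phase $\phi<1$), while the paper phrases it via slopes ($\nu_{\alpha,\beta}(F)=+\infty$ for all $\alpha$, so $F$ cannot define a wall for $E$, whose slope is finite) --- these are equivalent formulations of the same idea.
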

\begin{proof} By definition of $\Coh^\beta(X)$ we have $H\ch_1^\beta(F)\geq 0$ and $H\ch_1^\beta(Q)\geq 0$, which taken together with the linearity of the Chern character on exact sequences give the other side of the inequality for both objects as well: $H\ch_1^\beta(F) \leq H\ch_1^\beta(E)$ and $H\ch_1^\beta(Q) \leq H\ch_1^\beta(E)$. Now, if $H\ch_1^\beta(F)=0$ then $\nu_{\alpha,\beta}(F)=+\infty$ for all $\alpha$ and $F$ cannot define a wall for $E$; similarly, $H\ch_1^\beta(Q) \neq 0$, and again by difference we get the strict inequality on the other side as well.
\end{proof}

Another result that will be useful later is the following Proposition about walls for objects that are honest sheaves:

\begin{prop} \label{subofsheaf} If  $0\to F \xrightarrow{\phi} E \to Q\to 0$ is a short exact sequence in $\Coh^\beta(X)$ and $E$ is a sheaf in $\mathcal{T}^\beta$, then $F$ is also a sheaf in $\mathcal{T}^\beta$ and $\phi$ is a map of sheaves, but not necessarily injective.
\end{prop}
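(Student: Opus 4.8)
The plan is to exploit the fact that $\Coh^\beta(X)$ is the heart of a bounded $t$-structure, so that the short exact sequence carries the same information as a distinguished triangle
\[
F \xrightarrow{\phi} E \to Q \to F[1]
\]
in $D^b(X)$. Applying the cohomology functors $H^i(-)$ of the standard $t$-structure (whose cohomology objects are honest coherent sheaves) to this triangle yields a long exact sequence, and since every object of $\Coh^\beta(X)$ has cohomology concentrated in degrees $-1$ and $0$ it collapses to
\[
0 \to H^{-1}(F) \to H^{-1}(E) \to H^{-1}(Q) \to H^0(F) \to H^0(E) \to H^0(Q) \to 0.
\]

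First I would use the hypothesis that $E$ is a sheaf lying in $\mathcal{T}^\beta$: this means precisely that $H^{-1}(E)=0$ and $H^0(E)=E$. Feeding this into the sequence immediately forces $H^{-1}(F)=0$, as it injects into $H^{-1}(E)$. Now $F\in\Coh^\beta(X)$ already guarantees $H^0(F)\in\mathcal{T}^\beta$ and $H^{-1}(F)\in\mathcal{F}^\beta$, so the vanishing $H^{-1}(F)=0$ shows that $F$ is concentrated in degree $0$, i.e.\ it is genuinely a sheaf, and moreover $F=H^0(F)\in\mathcal{T}^\beta$. This settles the first assertion.

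Next, the categorical monomorphism $\phi$ induces on degree-zero cohomology a morphism of sheaves $H^0(F)=F\to E=H^0(E)$, which is exactly the map of sheaves in the statement, giving the second assertion. To analyze its kernel I would read off the rest of the long exact sequence: with $H^{-1}(E)=0$ it becomes
\[
0 \to H^{-1}(Q) \to F \to E \to H^0(Q) \to 0,
\]
so the sheaf-theoretic kernel of $\phi$ is precisely $H^{-1}(Q)$. Since $Q$ is an arbitrary object of $\Coh^\beta(X)$, nothing prevents $H^{-1}(Q)$ (which need only lie in $\mathcal{F}^\beta$) from being nonzero, and then $\phi$ fails to be injective as a map of sheaves even though it remains a monomorphism in $\Coh^\beta(X)$. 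Exhibiting one such $Q$ — for instance the shift of a suitable sheaf in $\mathcal{F}^\beta$ — would confirm that non-injectivity genuinely occurs.

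The argument is essentially formal, so I do not anticipate a serious obstacle; the only point demanding care is the bookkeeping of the long exact cohomology sequence together with the clean separation of the two notions of injectivity — monomorphism in the tilted heart versus injectivity of the induced map on $H^0$ — which is exactly where the ``not necessarily injective'' caveat originates.
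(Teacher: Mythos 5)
Your proposal is correct and follows essentially the same route as the paper: both take the long exact sequence of standard cohomology sheaves associated to the triangle $F \to E \to Q \to F[1]$, use $H^{-1}(E)=0$ to force $H^{-1}(F)=0$ (so $F$ is a sheaf in $\mathcal{T}^\beta$), identify $\phi$ with the induced map on $H^0$, and observe that its sheaf-theoretic kernel is $H^{-1}(Q)$, which need not vanish. The only cosmetic difference is that the paper additionally notes $H^{-1}(Q)=\ker\phi$ must be torsion-free (being in $\mathcal{F}^\beta$), so non-injectivity occurs exactly when $\rk F > \rk E$.
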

\begin{proof} From the long exact sequence in cohomology we get
\[
0\to H^{-1}(F) \to 0 \to H^{-1}(Q) \to H^0(F)\xrightarrow{\psi} E \to H^0(Q) \to 0,
\]
which yields $H^{-1}(F)=0$; this implies by definition that $F$ is a sheaf and it is in $\mathcal{T}^\beta$, and moreover $\phi$ coincides with $\psi$. This also shows that if  $H^{-1}(Q)\neq 0$ we have $\phi$ not injective at the level of sheaves, which happens exactly when $F$ has bigger rank than $E$ (since $H^{-1}(Q)=\ker \psi$ must be torsion-free).
\end{proof}

We then have a very useful structure Theorem regarding the walls in the $(\alpha,\beta)$-plane for a fixed class $v\in \Lambda$:

\begin{thm} \label{structureofwalls} Let $X$ be a surface, and fix a slice of $\Stab(X)$ corresponding to some ample divisor $H$; moreover, fix a class $v\in \Lambda$. Then we have the following:
\begin{itemize} 
\item[(i)]The (numerical) walls for $v$ are either semicircles with center on the $\beta$-axis or vertical rays;
\item[(ii)] The walls are all disjoint;
\item[(iii)] The (possibly degenerate) hyperbola $\nu_{\alpha,\beta}(v)=0$ intersects all the semicircular walls in their top point (i.e. the point right above the center);
\item[(iv)] If $\ch_0(v)\neq 0$ then there is a unique vertical wall given by the equation
\[
\beta=\dfrac{H\ch_1(v)}{H^2\ch_0(v)},
\]
and on either side of it the walls are strictly nested semicircles;
\item[(v)] If $\ch_0(v)=0$ then there is no vertical wall, and the walls are all concentric semicircles;
\item[(vi)] If a numerical wall is an actual wall at a point then it is a wall at every point.
\end{itemize}
\end{thm}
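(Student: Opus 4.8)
The plan is to reduce everything to a single explicit equation for the numerical wall $\mathcal{W}_{v,w}$ in the $(\alpha,\beta)$-plane. Writing $(R,C,D):=(H^2\ch_0(v),\,H\ch_1(v),\,\ch_2(v))$ and $(R',C',D')$ for the analogous triple of $w$, one reads off from the definition of $Z_{\alpha,\beta}$ that $\Im Z(v)=\alpha(C-\beta R)$ and $\Re Z(v)=\tfrac{\alpha^2}{2}R-\tfrac{\beta^2}{2}R+\beta C-D$, and similarly for $w$. Substituting into the defining relation $\Im Z(v)\Re Z(w)=\Im Z(w)\Re Z(v)$, everything carries an overall factor $\alpha$; after dividing it out the $\beta^3$ terms cancel and the coefficient of $\alpha^2$ collapses to a constant, so that the wall is cut out by
\[
\tfrac{a}{2}(\alpha^2+\beta^2)+b\beta+c=0,
\]
where $a=CR'-C'R$, $b=RD'-R'D$, $c=C'D-CD'$ are built from the $2\times2$ minors of the matrix with rows $(R,C,D)$ and $(R',C',D')$. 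This formula is the engine of the whole theorem, and it depends on $v,w$ only through their images in the three-dimensional space spanned by $(H^2\ch_0,H\ch_1,\ch_2)$. Part (i) is then immediate: if $a\neq0$ we complete the square to obtain a circle centred at $(0,-b/a)$ on the $\beta$-axis, of which, since $\alpha>0$, only the upper semicircle survives; if $a=0$ the equation is linear in $\beta$ and cuts out a vertical ray.

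Two elementary facts about $(a,b,c)$ drive the rest. First, $(a,b,c)$ is built from the $2\times2$ minors of the two rows, whence $Da+Cb+Rc=0$ identically (orthogonality of the cross product to its factors); this is the only input needed for (iii) and (iv). For (iii), the top point of a semicircular wall is $(\alpha,\beta)=(\rho,-b/a)$ with $\rho^2=(b/a)^2-2c/a$, so there $\alpha^2-\beta^2=-2c/a$; substituting this into the equation $\Re Z(v)=0$ of the hyperbola $\nu_{\alpha,\beta}(v)=0$, namely $\tfrac{R}{2}(\alpha^2-\beta^2)+C\beta-D=0$, yields $(-Rc-Cb-Da)/a$, which vanishes by the orthogonality relation. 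For the vertical walls in (iv), $a=0$ forces $\Im Z(v)=\alpha(C-\beta R)$ and $\Im Z(w)$ to vanish simultaneously, which happens only on $\beta=C/R$; the same relation shows that $\beta=C/R$ indeed solves $b\beta+c=0$, so every vertical wall coincides with this one line, proving its uniqueness.

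Disjointness and nesting I would read off from a coaxial-pencil structure. Since $(a:b:c)$ is constrained to the line $\{Da+Cb+Rc=0\}\subset\mathbb{P}^2$, all walls belong to a pencil of circles, whose base points I would locate by demanding that $(\alpha,\beta)$ solve the wall equation for every admissible $(a,b,c)$: this forces $(\tfrac12(\alpha^2+\beta^2),\beta,1)$ proportional to $(D,C,R)$, giving $\beta=C/R$ and $\alpha^2=(2DR-C^2)/R^2=-\Delta_H(v)/R^2$ when $R\neq0$. Here the Bogomolov inequality $\Delta_H(v)\geq0$ (Proposition \ref{bogomolov}) is decisive: the base points are then non-real, or lie on the boundary $\alpha=0$ when $\Delta_H(v)=0$, so two distinct circles of the pencil—meeting in at most two points and sharing only these non-real base points—cannot meet in the open upper half-plane, which is (ii). The real semicircular members have centres $(0,\beta_0)$ with $|\beta_0-C/R|\geq\sqrt{\Delta_H(v)}/|R|$ and radius increasing monotonically with $|\beta_0-C/R|$, splitting them into two strictly nested families on the two sides of the vertical wall $\beta=C/R$, giving (iv); when $R=0$ there is no finite base point, the centre $\beta_0=\ch_2(v)/H\ch_1(v)$ is common to all walls, and one gets concentric—hence disjoint and nested—semicircles and no vertical wall, which is (v).

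Finally, for (vi) I would exploit the symmetry $\mathcal{W}_{v,w}=\mathcal{W}_{w,v}=\mathcal{W}_{w,v-w}$ of the defining relation together with (ii) applied to the classes $w$ and $v-w$: along $\mathcal{W}_{v,w}$ the potential sub- and quotient objects $F$ and $Q$ cannot cross any of their own walls, since those are disjoint from $\mathcal{W}_{v,w}$, so by the openness statement in Proposition \ref{propactualwalls} they remain semistable along the whole connected wall, while their phases agree with that of $E$ at every point of $\mathcal{W}_{v,w}$; hence $E$ is strictly semistable everywhere on the wall as soon as it is so at one point. I expect this last step to be the main obstacle, as it is the only part that is not pure numerics and requires controlling the semistability of $F$ and $Q$ in the derived category rather than just the vanishing of a determinant.
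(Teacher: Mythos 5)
The paper itself gives no proof of Theorem \ref{structureofwalls}: it is quoted from the literature (this is Maciocia's nested-wall theorem; cf.\ \cite{macri2}), so your proposal has to be judged on its own merits rather than against an argument in the text. What you propose is essentially the standard proof, and its computational core is correct: the wall equation $\tfrac{a}{2}(\alpha^2+\beta^2)+b\beta+c=0$ does come out of the defining relation after the factor $\alpha$ and the $\beta^3$ and $\alpha^2\beta$ terms cancel; the identity $Da+Cb+Rc=0$ holds and delivers (i), (iii) and the uniqueness of the vertical wall in (iv); the coaxial-pencil computation of the base points, together with $\Delta_H(v)\geq 0$, gives (ii); the monotonicity of the radius in $|\beta_0-C/R|$ combined with (ii) gives the nesting in (iv) and (v); and the reduction of (vi) to disjointness applied to the classes $w$ and $v-w$ (the identity $\mathcal{W}_{v,w}=\mathcal{W}_{w,v}=\mathcal{W}_{w,v-w}$ is a correct consequence of bilinearity) is exactly the right mechanism.

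Two points need attention before this is a complete proof. First, your appeal to Proposition \ref{bogomolov} for $\Delta_H(v)\geq 0$ is not available in the generality of the statement: that Proposition applies to objects that are semistable for some $\sigma_{\omega,B}$, whereas the Theorem fixes an arbitrary $v\in\Lambda$. This matters, because for $\Delta_H(v)<0$ part (ii) is genuinely false: taking $v=\ch(\O\oplus\C_p)=(1,(0,0),1)$ and $H=(1,1)$ gives $(R,C,D)=(2,0,1)$, $\Delta_H(v)=-4$, and your own base-point formula puts a real base point at $(\alpha,\beta)=(1,0)$, through which every numerical wall for $v$ passes. So you must either add the hypothesis $\Delta_H(v)\geq0$ (which is how the result is actually used in the paper, where $v$ is the class of a Gieseker-semistable sheaf; compare the hypothesis of Theorem \ref{lvl}) or restrict to classes admitting a semistable object somewhere, and say so explicitly. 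Second, in (vi) you gloss over the fact that $F$ and $Q$ must remain in (a fixed shift of) the heart along the whole wall, not merely semistable: this is needed for the triangle $F\to E\to Q$ to stay a short exact sequence in the relevant abelian category. It can be repaired with the same disjointness input, since the locus $H\ch_1^\beta(w)=0$ is the vertical numerical wall for $w$, hence disjoint from $\mathcal{W}_{w,v}=\mathcal{W}_{v,w}$, so $H\ch_1^\beta(F)$ keeps a constant sign along the wall (and likewise for $Q$); also note that applying (ii) to $w$ and $v-w$ requires $\Delta_H(w)\geq0$ and $\Delta_H(v-w)\geq0$, which you should extract from Proposition \ref{bogomolov} at the single point where the wall is assumed actual. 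With these two repairs the argument is complete.
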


We will apply Theorem \ref{structureofwalls} throughout the paper, alongside with two other key results; the first one is known as Bertram's Lemma for sheaves (or \emph{``sliding down the wall''} principle), and it holds thanks to the fact that walls in the $(\alpha,\beta)$-plane are all disjoint (see \cite{p2} for a proof).

\begin{lem} Let $E$ be an object in $\mathcal{T}^{\beta_0} \subset \Coh^{\beta_0}(X)$ for some $\beta_0\in \R$, and let $F \xrightarrow{\phi} E$ be an inclusion in $\Coh^\beta_0(X)$ at some point $(\alpha_0,\beta_0)$ belonging to a wall $\mathcal{W}$ for $E$ (so that by Proposition \ref{subofsheaf} also $F$ is a sheaf in $\mathcal{T}^{\beta_0}$). Then the map $F \xrightarrow{\phi} E$ is an inclusion of semistable objects of the same slope in $\Coh^{\beta}(X)$ for all $(\alpha,\beta)\in \mathcal{W}$.
\end{lem}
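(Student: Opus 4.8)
The plan is to unpack the conclusion into three separate assertions and verify each at every point $(\alpha,\beta)\in\mathcal{W}$: that $E$ and $F$ both lie in $\Coh^\beta(X)$ with $\phi$ a monomorphism and $Q:=\operatorname{cone}(\phi)\in\Coh^\beta(X)$; that $\nu_{\alpha,\beta}(F)=\nu_{\alpha,\beta}(E)$; and that $E$ and $F$ are $\sigma_{\alpha,\beta}$-semistable. The equality of slopes will come almost for free from the definition of the wall, the persistence of the inclusion will follow from the structure theorem, and the real work is reserved for semistability, where the disjointness of walls does the heavy lifting.

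First I would treat the slopes together with heart membership. By definition $\mathcal{W}$ is the numerical wall $\mathcal{W}_{v,w}$ for $v=\ch(E)$ and $w=\ch(F)$, so the identity $\Im Z_{\alpha,\beta}(E)\,\Re Z_{\alpha,\beta}(F)=\Im Z_{\alpha,\beta}(F)\,\Re Z_{\alpha,\beta}(E)$ holds identically on $\mathcal{W}$; to read this as $\nu_{\alpha,\beta}(E)=\nu_{\alpha,\beta}(F)$ I need both imaginary parts nonzero. Since $E$ is a sheaf in $\mathcal{T}^{\beta_0}$, either $\ch_0(E)=0$---the torsion case relevant here, in which $H\ch_1^\beta(E)=H\ch_1(E)>0$ is independent of $\beta$---or $\ch_0(E)\neq 0$, in which case Theorem \ref{structureofwalls}(ii),(iv) shows the semicircle $\mathcal{W}$ cannot meet the vertical wall $\beta=H\ch_1(E)/(H^2\ch_0(E))$, so $H\ch_1^\beta(E)$ keeps the constant positive sign it has at $(\alpha_0,\beta_0)$. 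Either way $E$ stays a sheaf in $\mathcal{T}^\beta$ with $\Im Z_{\alpha,\beta}(E)>0$ on the open arc; Proposition \ref{subofsheaf} makes $F$ a sheaf in $\mathcal{T}^\beta$ and Lemma \ref{bounds} forces $0<H\ch_1^\beta(F)<H\ch_1^\beta(E)$, so $F$ also has positive imaginary part and $Q$ remains in $\Coh^\beta(X)$. The identity now reads $\nu_{\alpha,\beta}(E)=\nu_{\alpha,\beta}(F)$, hence $Z(E),Z(F)$ are positively proportional and by additivity $Z(Q)$ has the same phase; a connectedness argument on the arc $\mathcal{W}$ promotes ``$\phi$ is a monomorphism with cokernel $Q$'' from $(\alpha_0,\beta_0)$ to every point.

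For semistability, assume $E$ is $\sigma_{\alpha_0,\beta_0}$-semistable (the natural hypothesis behind ``$\mathcal{W}$ is a wall for $E$''); then $F$ is automatically semistable there, since a subobject of $F$ of strictly larger slope would also destabilize $E$. The key point is that disjointness of walls (Theorem \ref{structureofwalls}(ii)) makes semistability constant along $\mathcal{W}$. For any candidate destabilizing subobject $G\subset E$, the locus $\{\nu_{\alpha,\beta}(G)=\nu_{\alpha,\beta}(E)\}$ is the numerical wall $\mathcal{W}_{v,\ch(G)}$; if it differs from $\mathcal{W}$ it is disjoint from $\mathcal{W}$, so $\nu(G)-\nu(E)$ never vanishes and keeps a constant sign along the connected arc, while if it coincides with $\mathcal{W}$ then $\nu(G)=\nu(E)$ everywhere on $\mathcal{W}$ and $G$ never strictly destabilizes. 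In either case the sign of $\nu(G)-\nu(E)$ at $(\alpha_0,\beta_0)$ equals its sign at any other point of $\mathcal{W}$; since $E$ is semistable at the base point, $\nu(G)\leq\nu(E)$ there for every $G$, hence along all of $\mathcal{W}$, and $E$ is semistable throughout. The same argument applied to $F$ finishes the claim.

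The hard part will be controlling the destabilizing subobjects uniformly. A priori there are infinitely many candidate classes $\ch(G)$, so to run the constant-sign argument I must invoke local finiteness of walls (Proposition \ref{propactualwalls}(i)) to reduce to finitely many relevant walls near $\mathcal{W}$, and I must check that each such $G$ genuinely remains a subobject in $\Coh^\beta(X)$ as $\beta$ varies, so that $\nu_{\alpha,\beta}(G)$ is defined and continuous along the whole arc. The most delicate case is the strictly semistable one, where several subobjects share the wall $\mathcal{W}$; there I would use Remark \ref{remarkwalls} together with the structure theorem to organize the Jordan--H\"older factors at the wall and confirm that none strictly destabilizes. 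This bookkeeping, rather than any single estimate, is the genuine obstacle.
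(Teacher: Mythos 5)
The paper does not actually prove this lemma---it is stated with a pointer to \cite{p2}, plus the remark that it rests on the disjointness of walls---so your attempt can only be measured against the standard argument there. Your skeleton is the right one: disjointness of numerical walls for a fixed class forces the sign of $\nu(G)-\nu(E)$ to be constant along $\mathcal{W}$, and that is indeed the mechanism the paper alludes to. But there is a genuine gap at the step you defer as ``bookkeeping'': the persistence of heart membership along the wall, and your argument for it is circular. Proposition \ref{subofsheaf} and Lemma \ref{bounds} both take as \emph{hypothesis} that $0\to F\to E\to Q\to 0$ is a short exact sequence in $\Coh^{\beta}(X)$ at the point under consideration; you invoke them at an arbitrary $(\alpha,\beta)\in\mathcal{W}$ precisely in order to prove that the sequence lies in $\Coh^{\beta}(X)$ there. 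The ``connectedness argument on the arc'' cannot close this: for a fixed sheaf $F$ the set of $\beta$ with $F\in\mathcal{T}^{\beta}$ is the interval $\beta<\mu_{\min}(F)$ (the minimal Mumford slope of a quotient of $F$), and connectedness alone does not show that this interval contains the whole arc. The proof in \cite{p2} has real content here: if $F$ exited $\mathcal{T}^{\beta}$ at an interior point $\beta_1$ of the wall, its minimal destabilizing Mumford quotient $F\twoheadrightarrow G$ would satisfy $H\ch_1^{\beta}(G)\to 0^{+}$ as $\beta\to\beta_1$, while the classical Bogomolov inequality for the $\mu$-semistable sheaf $G$ forces $\ch_2^{\beta_1}(G)\le 0$, hence $\Re Z_{\alpha,\beta}(G)>0$ and $\nu_{\alpha,\beta}(G)\to-\infty$; so $G$ destabilizes $F$ on part of $\mathcal{W}$, and disjointness of walls then contradicts semistability of $F$ at $(\alpha_0,\beta_0)$. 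Nothing of this---in particular the use of the Bogomolov inequality on Harder--Narasimhan factors---appears in your proposal.

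The same gap resurfaces, in worse form, in your semistability step. From ``$E$ is semistable at the base point'' you conclude ``$\nu(G)\le\nu(E)$ there for every $G$,'' but semistability at $(\alpha_0,\beta_0)$ only controls subobjects $G\subset E$ lying in the heart \emph{at that point}. A candidate destabilizer at another point $(\alpha,\beta)\in\mathcal{W}$ need not belong to $\Coh^{\beta_0}(X)$ at all, so the sign-constancy of $\nu(G)-\nu(E)$ yields no contradiction unless you first show that $G$ (or a replacement built from its Harder--Narasimhan factors) persists as a subobject in the heart back at the base point. That is again exactly the heart-membership problem above; it is the real content of Bertram's Lemma, not an afterthought, and local finiteness of actual walls, which you invoke instead, is not the relevant tool for it.
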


Finally, we state the so-called Large Volume Limit Theorem (for a proof of this result in the most general setting one can combine the proofs in \cite{macri2} and \cite{p2})

\begin{thm} \label{lvl} Fix $H$ an ample divisor and let $v$ be a class such that $\Delta_{H}(v)\geq 0$; then walls in the $(\alpha,\beta)$-plane for objects of class $v$ are finite and the moduli space in the outermost chamber is isomorphic to the moduli of Gieseker-semistable objects of the same class.
\end{thm}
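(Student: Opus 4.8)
The plan is to establish the two assertions separately: first the finiteness of the walls for $v$ in the $(\alpha,\beta)$-plane, and then the identification of the moduli space in the unbounded outermost chamber with the Gieseker moduli space. For finiteness, I would start from the structure theorem (Theorem \ref{structureofwalls}): the walls for $v$ form a disjoint, nested family of semicircles centered on the $\beta$-axis, together with the single vertical wall when $\ch_0(v)\neq 0$. Being nested they are totally ordered, so it suffices to bound their radii from above and to rule out accumulation. On any such wall $\mathcal{W}$ I would take a destabilizing short exact sequence $0\to F\to E\to Q\to 0$ of semistable objects of the same $\nu_{\alpha,\beta}$-slope, as provided by Proposition \ref{propactualwalls}(ii). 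The Bogomolov inequality (Proposition \ref{bogomolov}) gives $\Delta_H(F)\geq 0$ and $\Delta_H(Q)\geq 0$, while Lemma \ref{bounds} controls the $\ch_1$-component via $0<H\ch_1^\beta(F)<H\ch_1^\beta(E)$. The essential quantitative input is that on the wall one also bounds $\Delta_H(F)$ from above by $\Delta_H(E)$; this comes from the support property, reading $\Delta_H$ as a quadratic form whose restriction to the rank-$\leq 2$ sublattice spanned by $\ch(F)$ and $\ch(E)$ has definite signature once $Z_{\omega,B}(F)$ and $Z_{\omega,B}(E)$ are forced to be parallel. Combining $0\leq \Delta_H(F)\leq \Delta_H(E)$, the bound on $H\ch_1(F)$, and the integrality of the Chern character leaves only finitely many numerical classes $\ch(F)$, hence finitely many walls.

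For the identification in the outermost chamber, I would use the finiteness just proved: there is an unbounded chamber $\{\alpha\gg 0\}$ in which the set of $\sigma_{\alpha,\beta}$-semistable objects of class $v$ is constant, so it can be computed in the limit $\alpha\to+\infty$. This splits into two steps: (a) showing that every such semistable object is an honest sheaf in $\mathcal{T}^\beta$, and (b) comparing the slope orderings. For (a) I would observe that an object $E\in\Coh^\beta(X)$ with $H^{-1}(E)\in\mathcal{F}^\beta$ nonzero admits the subobject $H^{-1}(E)[1]$ in $\Coh^\beta(X)$, and that the leading $\alpha^2$-term of $Z_{\omega,B}$ together with the imaginary part $\alpha H\ch_1^\beta$ force the phase of $H^{-1}(E)[1]$ above that of $E$ for $\alpha$ large; hence semistability in the outermost chamber forces $H^{-1}(E)=0$. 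For (b) I would expand
\[
\nu_{\alpha,\beta}(E)=\frac{\ch_2^\beta(E)-\tfrac{\alpha^2 H^2}{2}\ch_0(E)}{\alpha H\ch_1^\beta(E)}
\]
and read off that as $\alpha\to\infty$ the comparison of tilt-slopes reproduces the lexicographic comparison underlying twisted Gieseker stability: the dominant $\alpha$-term orders objects by Mumford slope, and the subleading terms refine the tie-breaking by the reduced Hilbert polynomial. Matching these orderings shows $\sigma_{\alpha,\beta}$-stability coincides with Gieseker stability with respect to $H$ for $\alpha\gg 0$, yielding the claimed isomorphism.

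I expect the finiteness of walls to be the main obstacle, and within it the step bounding $\Delta_H(F)$ from above on the wall, i.e. converting the support property into the inequality $\Delta_H(F)\leq\Delta_H(E)$ by a Hodge-index/signature argument on the plane spanned by $\ch(F)$ and $\ch(E)$. Once this is in place the lattice-finiteness is routine, and the large-volume comparison in (b) is a direct asymptotic computation; the only subtlety there is the genuine-sheaf statement (a), which must be settled before the slope comparison is meaningful. Since the general result is standard, in practice I would assemble it by combining the finiteness-of-walls argument of \cite{p2} with the large-volume-limit comparison presented in \cite{macri2}.
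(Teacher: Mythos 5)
Your proposal is correct and takes essentially the same route as the paper: the paper offers no proof of Theorem \ref{lvl}, deferring instead to precisely the combination you name, the finiteness-of-walls argument of \cite{p2} and the large-volume-limit comparison of \cite{macri2}. Your sketch of the two halves --- nested-wall structure plus the support-property bound $\Delta_H(F)\leq\Delta_H(E)$ and integrality for finiteness, then the sheaf-reduction and asymptotic slope comparison for the Gieseker identification --- is the standard argument contained in those references.
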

\subsection{Preliminaries on $\P^1\times\P^1$ and torsion sheaves}
From here on, $X=\P^1\times \P^1$; let $D_1$ and $D_2$ be the divisors corresponding to the two rulings of the quadric, so that $D_1^2 = D_2^2=0$ and $D_1 \cdot D_2=1$. It is known that $D_1$ and $D_2$ are generators for the N\'{e}ron-Severi and we will use the notation $(a,b)$ for the divisor $aD_1+bD_2$; a divisor is ample if and only if $a, \ b>0$, and throughout the paper we will use either $H=(1,2)$ or $H=(1,1)$; the choice will depend on whether we need to distinguish between objects with similar numerics or not.

Indeed, for all the computations involving an object $E$ we will always consider the quantity $d= H \cdot \ch_1(E)$, so that objects with different first Chern classes may have the same value for $d$; we will call the triple $(\rk(E), H \cdot \ch_1(E), \ch_2(E))= (r,d,c)$ the \emph{invariants} of the object $E$. More explicitly, for an object with $\ch_1(E)=(a,b)$ we have $d=2a+b$ if $H=(1,2)$ and $d=a+b$ if $H=(1,1)$.
We will also use the classical notation $\O(a,b) = \pi_1^*\O(a)\tensor \pi_2^*\O(b)$, where $\pi_1$ and $\pi_2$ are the projections onto the two factors of $\P^1\times\P^1$. 

Notice that for any line bundle $L=\O(a,b)$ and any choice of $H$ the invariants of $L$ are all integers, since $r, a$ and $b$ are and $\ch_2(L)= \ch_1(L)^2/2 = ab$; in virtue of the fact that we have a resolution by line bundles for every sheaf on $\P^1\times \P^1$ (see for example \cite{kapranov}), we get that the invariants $(r,d,c)$ are all in $\Z$ for any sheaf, and therefore for all objects in $D^b(X)$.

Now let $F$ be an object of invariants $(r,d,c)$; by definition we have
\[
H \cdot \ch^\beta(F)=\left(H^2 r, d-H^2\beta r, c- \beta d+ H^2 \dfrac{\beta^2}{2}r\right),
\]
and
\[
\nu_{\alpha,\beta}(F) = \dfrac{c-\beta d+ H^2 \left(\dfrac{\beta^2}{2}- \dfrac{\alpha^2}{2}\right)r}{d-H^2\beta r}.
\]
In particular, if $E$ is an object of invariants $(0,d',c')$ with $d'\neq 0$ we get
\[
H \cdot \ch^\beta(E)=(0, d', c'- \beta d'),
\]
and
\[\nu_{\alpha,\beta}(E) = \dfrac{c'-\beta d'}{d'}= \dfrac{c'}{d'}- \beta.
\]

The equation of a wall of the type $\mathcal{W}_{(r,d,c),(0,d',c')}$ is therefore given by
\begin{equation}
H^2r(\beta^2 +\alpha^2) - 2\dfrac{c'}{d'}H^2r\beta +2\dfrac{c'd}{d'}-2c =0 \label{walleq},
\end{equation}
and we can assume $r\neq0$, otherwise the equation would be independent from $\alpha$ and $\beta$ and it would not give a wall; hence, the center and the radius for the wall are given by
\begin{equation}
\qquad C=\left(\dfrac{c'}{d'},0\right) \qquad \qquad R = \sqrt{\left(\dfrac{c'}{d'}\right)^2-\dfrac{2c'd}{H^2d'r}-\dfrac{2c}{H^2r}}\label{generalradius}.
\end{equation}
We are interested in this kind of walls because in Section 3 we will investigate wall-crossing behavior for moduli spaces of certain torsion sheaves on $\P^1\times\P^1$:
\begin{defin} Let $(a,b)$ be a divisor on $\P^1\times\P^1$ and let $p(m):=(a+b)m+\chi$ be a linear polynomial in $m$. We define $\M(0, (a,b), p(m))$ to be the \emph{Simpson moduli space} of Gieseker-semistable torsion sheaves $E$ on $\P^1\times\P^1$ with $\ch_1(E)=(a,b)$ and Hilbert polynomial equal to $p(m)$.
\end{defin}

These moduli spaces are all projective by \cite{simpson}; moreover, the projectivity of all the other moduli spaces in our construction is ensured by \cite{arcara}. We conclude this section with a technical result from \cite{arcara2} that will grant stability of at least one of the two destabilizing objects involved in each wall crossing for Section 3, in light of Remark \ref{remarkwalls}:
\begin{thm} Line bundles on $\P^1\times\P^1$ are stable for all stability conditions of type $\sigma_{\omega,B}$.
\end{thm}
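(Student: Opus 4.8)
The plan is to reduce to the structure sheaf and then exploit the vanishing of the Bogomolov discriminant. Tensoring by a line bundle is an autoequivalence of $D^b(X)$ that multiplies the twisted Chern character by $e^{(a,b)}$, hence sends $\sigma_{\omega,B}$-(semi)stable objects to $\sigma_{\omega,B+(a,b)}$-(semi)stable ones; consequently $\O(a,b)$ is $\sigma_{\omega,B}$-stable if and only if $\O=\O_X$ is $\sigma_{\omega,B-(a,b)}$-stable, and it suffices to prove that $\O$ is stable for every ample $\omega$ and every $B\in\NS(X)$. The gain is that $\ch(\O)=(1,0,0)$, so that
\[ \Delta_\omega(\O)=(\omega\,\ch_1(\O))^2-2\omega^2\ch_0(\O)\ch_2(\O)=0, \]
whereas $\Delta_\omega$ does not vanish on a general line bundle. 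Since $\O$ is a rank-one torsion-free sheaf it is $\mu_{\omega,B}$-stable, so it lies entirely in the torsion part $\mathcal{T}$ of the tilt when $\omega B<0$ and entirely in the torsion-free part $\mathcal{F}$ when $\omega B\ge0$; thus either $\O$ or its shift $\O[1]$ is the object of the heart $\Coh^{\omega,B}(X)$, and I will treat the two cases in parallel (their characters $(1,0,0)$ and $(-1,0,0)$ behave identically under $\Delta_\omega$).

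Suppose the relevant object is destabilized, and use Remark \ref{remarkwalls} and Proposition \ref{propactualwalls} to produce a short exact sequence $0\to F\to \O\to Q\to0$ in $\Coh^{\omega,B}(X)$ realizing an actual wall, with $F$ and $Q$ both $\sigma$-semistable of the same slope. The Bogomolov inequality (Proposition \ref{bogomolov}) gives $\Delta_\omega(F),\Delta_\omega(Q)\ge0$. On the other hand, along a wall the classes $\ch(F),\ch(Q)$ span a plane on which $\Delta_\omega$ has Lorentzian signature — negative on $\ker Z_{\omega,B}$ by the support property, indefinite because it is nonnegative on the two semistable classes — and $\ch(F),\ch(Q)$ lie in the same component of the positivity cone because $Z(F)$ and $Z(Q)$ are positive real multiples of $Z(\O)$. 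The reverse triangle inequality in this Lorentzian plane then yields the subadditivity $\Delta_\omega(F)+\Delta_\omega(Q)\le\Delta_\omega(\O)=0$, forcing $\Delta_\omega(F)=\Delta_\omega(Q)=0$ together with the vanishing of the associated bilinear form on $(\ch F,\ch Q)$.

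Two orthogonal null vectors in a Lorentzian space are proportional, so $\ch(F)$ is a multiple of $\ch(\O)=(1,0,0)$ in the reduced space $(\ch_0,\omega\,\ch_1,\ch_2)$; in particular $\omega\,\ch_1(F)=0$ and $\ch_2(F)=0$, with $\ch_0(F)=\lambda$. The analogue of Lemma \ref{bounds} for an arbitrary $(\omega,B)$ (identical proof, with $\omega\,\ch_1^B$ in place of $H\ch_1^\beta$) gives $0<\omega\,\ch_1^B(F)<\omega\,\ch_1^B(\O)$; substituting $\omega\,\ch_1^B(F)=-(\omega B)\lambda$ and $\omega\,\ch_1^B(\O)=-(\omega B)$ this reads $0<\lambda<1$ when $\omega B<0$ (and $-1<\lambda<0$ in the $\O[1]$ case), which is impossible since $\lambda=\ch_0(F)$ is an integer. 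Hence $\O$ has no actual walls, and since the Large Volume Limit Theorem \ref{lvl} identifies the outermost moduli space with the Gieseker moduli space, in which the line bundle $\O$ is stable, stability propagates across the whole $(\omega,B)$-space; the reduction of the first paragraph then gives the statement for every line bundle. The main obstacle is the subadditivity step: one must check that $F$ and $Q$ genuinely sit in the same cone of $\{\Delta_\omega\ge0\}$ so that the Lorentzian reverse triangle inequality applies, and one must separately discard the degenerate endpoints $\lambda\in\{0,1\}$, which correspond to $F=0$ or to $Q$ supported in dimension zero rather than to an honest destabilizer.
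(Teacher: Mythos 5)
The paper itself does not prove this statement: it is imported wholesale from \cite{arcara2} (Arcara--Miles), so your argument has to stand on its own. Your overall strategy --- reduce to $\O$ by tensoring, then show that a class on which the support-property quadratic form vanishes admits no actual walls, via the Lorentzian reverse triangle inequality --- is a legitimate and standard technique, and the reduction and the integrality endgame are fine as far as they go. The genuine gap is the central claim that $\Delta_\omega$ is ``negative on $\ker Z_{\omega,B}$ by the support property.'' On $\P^1\times\P^1$ this is false for a general pair $(\omega,B)$: since the Picard rank is $2$, there is a nonzero class $c'\in\NS(X)_\R$ with $\omega c'=0$, and the vector $v=(0,c',Bc')\in\Lambda_\R$ satisfies $\omega\ch_1^B(v)=0$ and $\ch_2^B(v)=Bc'-Bc'=0$, hence $v\in\ker Z_{\omega,B}$, while $\Delta_\omega(v)=(\omega c')^2=0$. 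So $\Delta_\omega$ is degenerate (not negative definite) on $\ker Z_{\omega,B}$. The paper's loose formulation of the support property hides this because the paper only ever works in slices $\omega=\alpha H$, $B=\beta H$, where $Z$ factors through the rank-$3$ lattice $(H^2\ch_0,H\ch_1,\ch_2)$ and $\Delta_H$ is honestly negative definite on the (then one-dimensional) kernel. But the theorem you are proving concerns \emph{all} $(\omega,B)$, including $B\not\propto\omega$, where $Z$ does not factor through your ``reduced space'' $(\ch_0,\omega\ch_1,\ch_2)$ at all. In that regime the kernel vector $k=\ch(F)-\lambda\ch(\O)$ of $Z$ inside the plane spanned by $\ch(F),\ch(Q)$ need not be $\Delta_\omega$-negative; indeed, taking $B=N c'_0$ with $\omega c'_0=0$ and $N\gg0$, one computes $\Delta_\omega(k)=\rho^2\bigl[(\omega B)^2-\omega^2B^2-\omega^4\bigr]-2\omega^2\rho Bc'>0$ for suitable $\rho\neq0$, $c'=0$. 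Then $\Delta_\omega$ restricted to the plane can be positive semidefinite, the reverse triangle inequality fails, and ``two orthogonal null vectors are proportional'' fails with it --- exactly in the off-slice cases that give the theorem its content.

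The argument can be repaired, but only by importing a result not in the paper: replace $\Delta_\omega$ with the full discriminant $\Delta(E)=\ch_1(E)^2-2\ch_0(E)\ch_2(E)$. By the Hodge index theorem this form \emph{is} negative definite on $\ker Z_{\omega,B}$ for every ample $\omega$ and every $B$ (the kernel is spanned by $(0,c',Bc')$ with $\Delta=c'^2<0$ and by $(1,c_0,s_0)$ with $\omega(c_0-B)=0$, giving $\Delta=(c_0-B)^2-\omega^2<0$, and the Gram determinant equals $-c'^2\omega^2>0$); moreover $\Delta$ vanishes on \emph{every} line bundle, since $(a,b)^2=2ab$, so your reduction to $\O$ becomes unnecessary. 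The price is that you then need $\Delta(E)\geq0$ for all $\nu_{\omega,B}$-semistable objects, a Bogomolov--Gieseker inequality strictly stronger than Proposition \ref{bogomolov} of the paper; it is true, but must be quoted from the tilt-stability literature (Bayer--Macr\`{i}--Toda). Finally, even after this repair, your endgame misses the locus $\omega B=0$: there the object of the heart is $\O[1]$ with $\omega\ch_1^B(\O[1])=0$, so the analogue of Lemma \ref{bounds} (which requires $\omega\ch_1^B(E)>0$) gives no contradiction, the object has infinite slope, and its stability (non-existence of proper phase-$1$ subobjects) requires a separate direct argument that you flag but do not supply.
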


\section{Wall crossing}
\subsection{Wall crossing for $\M(0,(2,3),5m+2)$}
Let $\M= \M(0,(2,3),5m+2)$; first of all, by \cite{simpson} and \cite{lepot} we have that $\M$ is a projective variety of dimension 13. Then we have the following Theorem from \cite{maican1} describing the strata of $\M$ in terms of minimal resolutions:
\begin{thm}\label{maicanstrata1}
There exists a decomposition of $\M$ into three disjoint strata $\M_0$, $\M_1$ and $\M_2$. The strata have the following properties: 
\begin{itemize} 
\item[(i)] $\M_0$ is open in $\M$ and it corresponds to the set of sheaves $\mathcal{E}$ with a resolution of the form
\begin{equation}
0\to\O(-1,-2)\oplus\O(-1,-1) \xrightarrow{\phi} \O^2 \to \mathcal{E} \to 0, \label{res1}
\end{equation}
having $\phi_{12}$ and $\phi_{22}$ linearly independent;
\item[(ii)] $\M_1$ is a closed subvariety of codimension 1 and it corresponds to the set of sheaves $\mathcal{E}$ with a resolution of the form
\begin{equation}
0\to\O(-1,-2)\oplus\O(-2,-1) \xrightarrow{\phi} \O(-1,-1)\oplus\O(0,1) \to \mathcal{E} \to 0,  \label{res2}
\end{equation}
having $\phi_{11}\neq 0$ and $\phi_{12}\neq 0$; equivalently, $\mathcal{E}$ fits into a short exact sequence
\[
0\to \O_C(0,1) \to \mathcal{E}\to \C_p \to 0,
\]
with $C\in |\O(2,3)|$ and $\C_p$ the skyscraper sheaf at a point $p\in C$;
\item[(iii)] $\M_2$ is a closed subvariety of codimension 2 isomorphic to $\P^{11}$, and $\mathcal{E}\in\M_2$ if and only if $\mathcal{E}\simeq \O_C(1,0)$ for $C\in |\O(2,3)|$. Equivalently, it corresponds to the set of sheaves $\mathcal{E}$ with resolution
\begin{equation}
0\to \O(-1,-3)\to \O(1,0) \to \mathcal{E}\to 0. \label{res3}
\end{equation}

\end{itemize}
\end{thm}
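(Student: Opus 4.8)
Since the statement is recorded from \cite{maican1}, the plan is to reconstruct the argument, which rests on the Beilinson monad for $\P^1\times\P^1$ together with a stratification by resolution type. First I would record the numerical constraints: Gieseker-semistability forces $\mathcal{E}$ to be pure of dimension one, hence supported on a curve $C\in|\O(2,3)|$, and Riemann--Roch on $X$ (using $c_1(X)=(2,2)$) gives $\chi(\mathcal{E})=\ch_2(\mathcal{E})+5$. Since $\chi(\mathcal{E})=p(0)=2$, this pins down $\ch_2(\mathcal{E})=-3$, so all sheaves in $\M$ share the invariants $(0,(2,3),-3)$.

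The core step is to run the Beilinson spectral sequence for the full strong exceptional collection $\{\O,\O(1,0),\O(0,1),\O(1,1)\}$ (or its dual). After a cohomology-vanishing analysis — semistability kills the off-diagonal groups $H^i(\mathcal{E}\otimes L)$ for the relevant twists $L$ — the sequence degenerates to a two-term complex $0\to A\xrightarrow{\phi}B\to\mathcal{E}\to0$ with $A$ and $B$ direct sums of line bundles drawn from the collection, whose total Chern characters are forced by the invariants. Only finitely many numerical shapes survive, the generic one being \eqref{res1}. I would then track the degenerations: whenever one of the twisted cohomology groups jumps in dimension, a line-bundle summand migrates from $B$ to $A$ and the resolution changes shape, producing \eqref{res2} at the first jump and \eqref{res3} at the second. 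Semicontinuity of cohomology makes the loci $\M_1$ and $\M_2$ closed, while their complements are open.

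For the numerology I would take $\dim\M=13$ from \cite{lepot} (or from the expected dimension of the space of such resolutions) and count parameters in each resolution, obtaining codimension $1$ for \eqref{res2} and codimension $2$ for \eqref{res3}. The identification $\M_2\cong\P^{11}$ follows because $h^0(\O(2,3))=3\cdot4=12$, so $|\O(2,3)|=\P^{11}$, together with the fact that $\mathcal{E}\cong\O_C(1,0)$ is determined by $C$ alone, giving a bijection with the linear system. For the alternative sheaf-theoretic descriptions: the resolution \eqref{res3} directly exhibits $\mathcal{E}\cong\O_C(1,0)$ with $C$ the zero scheme of the section $\O(-1,-3)\to\O(1,0)$ of $\O(2,3)$; for $\M_1$ I would realize $\mathcal{E}$ as the extension $0\to\O_C(0,1)\to\mathcal{E}\to\C_p\to0$ by identifying the pure rank-one quotient $\O_C(0,1)$ and the length-one torsion cokernel, and checking that the resulting point $p$ lies on $C$.

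The main obstacle is the cohomology bookkeeping in the second paragraph: proving that semistability forces exactly the stated vanishing/jumping pattern for the twisted cohomology groups, so that \eqref{res1}--\eqref{res3} are the only resolution types that occur and the strata have precisely the claimed codimensions. Once that pattern is established, the remaining steps are routine applications of Riemann--Roch and semicontinuity.
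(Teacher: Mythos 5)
The first thing to note is that the paper contains no proof of Theorem \ref{maicanstrata1} at all: it is imported verbatim from \cite{maican1} and used as an input to the wall-crossing analysis, so there is no internal argument to compare yours against; the comparison has to be with the cited source. Judged on those terms, your framing is correct: the numerics check out (Riemann--Roch on $X$ gives $\chi(\mathcal{E})=\ch_2(\mathcal{E})+5$, forcing $\ch_2(\mathcal{E})=-3$, and $h^0(\O(2,3))=12$ gives $|\O(2,3)|\simeq\P^{11}$, matching the description of $\M_2$), and the toolkit you name --- Beilinson spectral sequence, cohomology constraints coming from semistability, semicontinuity for closedness of strata --- is indeed the one used in that literature.

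As a proof, however, the proposal has a genuine gap, which you flag yourself: the ``cohomology bookkeeping'' you defer is not a technical detail but the entire content of the theorem. What must actually be established is (a) that semistability permits only three cohomology patterns for the relevant twists; (b) that each pattern yields precisely the minimal resolutions \eqref{res1}, \eqref{res2}, \eqref{res3}, including the open conditions on $\phi$ (linear independence of $\phi_{12},\phi_{22}$, nonvanishing of $\phi_{11},\phi_{12}$); and (c) the converse, that every $\phi$ satisfying those conditions produces a semistable sheaf --- without (c) the strata are not identified with the resolution loci, and the codimension counts you propose to do by parameter counting have no starting point. Moreover, the mechanism as you describe it is too naive to produce the stated resolutions: the terms of a single Beilinson complex are sums of bundles from one twisted collection (up to a common twist, $\O(-1,-1)$, $\O(-1,0)$, $\O(0,-1)$, $\O$), whereas \eqref{res1} involves $\O(-1,-2)$, $\O(-1,-1)$, $\O$ and \eqref{res2} involves bundles whose bidegrees spread by $(1,3)$; no single spectral sequence ``degenerates'' onto these complexes, and one summand does not simply ``migrate'' across the map when cohomology jumps. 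The actual argument in \cite{maican1} combines monads for several twists of $\mathcal{E}$, cancels redundant summands, and runs a case analysis on the resulting maps --- that is where the work lies. A small additional slip: in the extension describing $\M_1$, the sheaf $\O_C(0,1)$ is the subobject and $\C_p$ the quotient (purity of $\mathcal{E}$ forbids zero-dimensional subsheaves, not quotients), whereas you describe $\O_C(0,1)$ as the quotient.
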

In this section we're going to use $H=(1,2)$, so that $H^2=4$; by Riemann-Roch, we have that for $\mathcal{E} \in \M$ the Chern character is given by $v=\ch \mathcal{E} = (0, (2,3),-3)$ and $H\cdot \ch^{\beta}\mathcal{E}=(0,7,-3-7\beta)$, so that by (\ref{generalradius}) the walls for $v$ in the $(\alpha,\beta)$-plane all have center and radius given by
\begin{equation}
C=\left(-\dfrac{3}{7},0\right) \qquad R=\sqrt{\dfrac{9}{49}+\dfrac{3d}{14r}+\dfrac{c}{2r}}. \label{radius} 
\end{equation}
By Theorem \ref{lvl} we also know that $\M$ appears as the moduli space $\M_\sigma(v)$ in the outermost chamber, after finitely many walls.

From the resolutions given in Theorem \ref{maicanstrata1}, we can see that the open stratum has a potential destabilizing subobject given by
\[
[0\to \O(-1,-1) \to \O^2],
\]
which is derived equivalent to the ideal sheaf $\mathcal{I}_{p,q}(1,1)$ for $p,q$ two points on the quadric, and has Chern character $(1,(1,1),-1)$; this means that the objects $\mathcal{I}_{p,q}(1,1)$ and $\O(-1,-2)[1]$ determine a wall $\mathcal{W}_0$ of radius $R_0= \sqrt{\dfrac{16}{49}}=\dfrac{4}{7}$ below which no sheaf in the open stratum can be stable. Therefore, $\mathcal{W}_0$ is a candidate to be a collapsing wall for this moduli space and indeed we'll prove in Theorem \ref{main1} that it is, and that this is the only wall at which sheaves in the open stratum are destabilized when coming down from the Gieseker chamber. For now, we restrict our attention to the possible numerical walls that have radius bigger than $R_0$.

\begin{prop}\label{numwalls1} There are only two possible numerical walls for the class $v=\ch E = (0,(2,3),-3)$ with radius bigger than $R_0$, when $H=(1,2)$. In fact, if a sequence $0\to F \to E \to Q \to 0$ defines one of such walls, the invariants for $F$ (resp. $Q$) can only be $(r,d,c) = (1,1,0)$ or $(1,2,0)$.
\end{prop}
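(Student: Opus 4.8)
The plan is to constrain the invariants $(r,d,c)$ of a destabilizing subobject $F$ by combining the radius formula (\ref{radius}) with the Bogomolov inequality applied to \emph{both} $F$ and the quotient $Q$, and then to discard the resulting spurious solution branches using Lemma \ref{bounds}.

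First I would record the structural facts. Since $\mathcal{E}$ is a torsion sheaf with $\ch_0=0$, Theorem \ref{structureofwalls}(v) says all walls are semicircles concentric at $C=(-3/7,0)$, and by part (vi) the destabilizing sequence persists along the entire wall. By Proposition \ref{subofsheaf} the subobject $F$ is an honest sheaf, so $r:=\ch_0(F)\geq 0$; a rank-$0$ subobject cannot define a wall, since then the wall equation (\ref{walleq}) is independent of $\alpha,\beta$, so in fact $r\geq 1$. By additivity the invariants of $Q$ are $(-r,\,7-d,\,-3-c)$, and by Remark \ref{remarkwalls} I may take both $F$ and $Q$ to be semistable at the wall, so that Proposition \ref{bogomolov} applies to each of them.

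The three inequalities I would then write down are: the radius condition $R>R_0$, which via (\ref{radius}) and $R_0=4/7$ reads $3d+7c>2r$; the Bogomolov bound $d^2-8rc\geq 0$ for $F$; and the Bogomolov bound $(7-d)^2-8r(3+c)\geq 0$ for $Q$. Eliminating the quantity $8rc$ between the radius condition and each Bogomolov bound yields two quadratic inequalities in $(r,d)$ that factor as $(4r-7d)(4r+d)<0$ and $(4r+7(7-d))(4r-(7-d))<0$. Each has two sign-branches, and the branches forcing $d<-4r$ or $d>7+\tfrac{4}{7}r$ are precisely the ones ruled out by evaluating Lemma \ref{bounds} at the top of the semicircle (where $\beta=-3/7$ and hence $0<d+\tfrac{12}{7}r<7$). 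The surviving branches give $\tfrac{4}{7}r<d<7-4r$, an interval which is nonempty only if $\tfrac{32}{7}r<7$, forcing $r=1$.

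With $r=1$ the interval becomes $\tfrac{4}{7}<d<3$, so $d\in\{1,2\}$, and in each case the bounds $c\leq d^2/8$ and $7c>2-3d$ force $c=0$, leaving exactly $(r,d,c)=(1,1,0)$ and $(1,2,0)$. I expect the main difficulty to be not the algebra but the justification of its inputs: one must confirm that Bogomolov is genuinely available for the negative-rank complex $Q$ (hence the appeal to Remark \ref{remarkwalls} for semistability) and, above all, that Lemma \ref{bounds} is what eliminates the large-$|d|$ branches of the two factored quadratics — without this the numerical inequalities alone admit an unbounded family of formal solutions.
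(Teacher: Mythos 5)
Your proof is correct, and it arrives at exactly the same interval $\tfrac{4}{7}r < d < 7-4r$ as the paper, but by a different mechanism. The paper restricts to the positive-rank member of the pair $(F,Q)$ by symmetry and then uses Bertram's Lemma to slide the destabilizing sequence to the two vertical lines $\beta=\tfrac17$ and $\beta=-1$, which every wall of radius $R>R_0$ must cross at positive $\alpha$; Lemma \ref{bounds} applied at those two points gives $\tfrac{4}{7}r < d < 7+\tfrac{4}{7}r$ and $-4r < d < 7-4r$ directly, forcing $r=1$, $d\in\{1,2\}$, and only afterwards does it use Bogomolov (for $F$ alone) together with the radius condition to get $c=0$. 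You instead derive the bound on $d$ from the Bogomolov inequality for \emph{both} $F$ and $Q$ combined with the radius condition -- your factorizations $7d^2+24rd-16r^2=(7d-4r)(d+4r)$ and its analogue for $Q$ are correct -- and you need Lemma \ref{bounds} only once, at the top of the semicircle $\beta=-\tfrac37$, a point lying on every wall since the walls are concentric, to kill the branches $d<-4r$ and $d>7+\tfrac47 r$. What your route buys: the application of Lemma \ref{bounds} is at a single canonical point, independent of the hypothesis $R>R_0$, which enters only through the clean inequality $3d+7c>2r$. What it costs: you must justify Bogomolov for the negative-rank object $Q$; you handle this correctly, though the cleaner citation is Proposition \ref{propactualwalls}(ii), which already gives semistability of both $F$ and $Q$ at the wall (Remark \ref{remarkwalls} concerns upgrading one of them to stability), and note the paper itself applies Bogomolov to the quotient in Proposition \ref{numwalls3}, so the technique is squarely within its toolkit. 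Two minor citation points: the persistence of the destabilizing sequence along the whole wall, which you need in order to evaluate Lemma \ref{bounds} at the top point, is really Bertram's Lemma (equivalently Proposition \ref{propactualwalls}(ii) applied at that point of the actual wall) rather than Theorem \ref{structureofwalls}(vi) alone; and when quoting the radius, be aware that the specialized formula (\ref{radius}) is the correct one to use, as the sign of the last term in the general formula (\ref{generalradius}) contains a typo, so your inequality $3d+7c>2r$ is right.
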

\begin{proof}
From the general equation of a wall (\ref{walleq}) we know that $r\neq 0$, otherwise the equation becomes independent of $\alpha$ and $\beta$; moreover we can assume that $F$ has positive rank, since this argument is symmetric in $F$ and $Q$ and $r(F)+r(Q)=0$.

Let then $F$ be a destabilizing subobject for an object of class $v$ along a wall of radius $R>R_0$; by Bertram's Lemma, $F$ has to be a subobject at every point of the wall, and if $R>R_0$ then the wall must intersect the vertical lines $\beta = -3/7+4/7=1/7$ and $\beta=-3/7-4/7=-1$ for positive $\alpha$, so that in particular $F$ must be a subobject for those values of $\beta$.

By Lemma \ref{bounds}, we have
\[\dfrac{4}{7}r < d < 7 +\dfrac{4}{7}r \quad \text{and} \quad  -4r < d < 7 -4r;
\]
since $d$ must be an integer, the first inequality tells us that $d\geq 1$ for all $r\geq1$, while the second inequality gives $d\leq -1$ when $r\geq2$. This means we must have $r=1$. Now when $r=1$ the second inequality yields $d <3$, which only leaves us with $d=1$ or $d=2$.

The Bogomolov inequality for $F$ yields $d^2-8c\geq 0$, so that $c\leq \dfrac{d^2}{8}$; moreover, since the wall must have radius bigger that $R_0$, by (\ref{radius}) we also have that $\dfrac{3d}{14}+\dfrac{c}{2}>\dfrac{1}{7}$, which yields $c>\dfrac{2-3d}{7}$. Since $c$ must be an integer we have $c=0$ both for $d=1$ and $d=2$, which proves the claim.
\end{proof}
Let $\mathcal{W}_2$ and $\mathcal{W}_1$ be respectively the outermost and innermost wall from Proposition \ref{numwalls1}. Using similar numerical arguments we can also prove the following Lemma

\begin{lem}\label{mainlemma} Let $H=(1,2)$.
\begin{itemize}
\item[(a)]For all $\alpha>0$ and $\beta<\dfrac{1}{4}$, the only tilt-semistable object with invariants $(1,1,0)$ is the line bundle $\O(0,1)$;
\item[(b)] For all $\alpha>0$ and $\beta<\dfrac{1}{2}$, the only tilt-semistable objects with invariants $(1,2,0)$ are the line bundles $\O(1,0)$ and $\O(0,2)$.
\end{itemize}
\end{lem}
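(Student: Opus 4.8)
The plan is to prove both parts along the same lines, exploiting the position of the vertical wall together with the Large Volume Limit Theorem. For part (a) the class is $v=(1,1,0)$; since $\rk v\neq 0$, Theorem \ref{structureofwalls}(iv) places the unique vertical wall at $\beta=\frac{d}{H^2\rk}=\frac14$, so the hypothesis $\beta<\frac14$ says precisely that we work on one side of it, where by the same item the semicircular walls are nested. The region $\{\alpha>0,\ \beta<\frac14\}$ is convex, hence connected, and it reaches the large-volume limit $\alpha\to\infty$. One inclusion is immediate: by the stability of line bundles for all $\sigma_{\omega,B}$, $\O(0,1)$ is $\sigma_{\alpha,\beta}$-stable everywhere and has invariants $(1,1,0)$. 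So the real content is that nothing else of these invariants can be semistable in the region.

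The first step is to identify the candidates in the outermost chamber. For $\alpha\gg 0$ and $\beta<\frac14$ a tilt-semistable object $E$ of positive rank must be an honest sheaf: if $H^{-1}(E)\neq 0$, then the subobject $H^{-1}(E)[1]$ has negative rank and $\nu_{\alpha,\beta}\to+\infty$, while the quotient $H^0(E)$ has positive rank and $\nu_{\alpha,\beta}\to-\infty$, contradicting semistability for $\alpha$ large. Hence, by Theorem \ref{lvl}, the only objects of class $v$ surviving into the outermost chamber are the Gieseker-semistable sheaves, ranging over the finitely many full Chern characters $(1,(a,b),0)$ with $2a+b=d$. Such a sheaf is torsion-free (a torsion subsheaf would destabilize it), hence of the form $I_Z(a,b)$, and $\ch_2=0$ forces $\ell(Z)=ab$; since a length is nonnegative we need $ab\geq 0$, and writing $b=1-2a$ gives $ab=a(1-2a)\leq 0$ with equality only at $a=0$. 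Thus $Z=\emptyset$ and $E=\O(0,1)$ is the unique candidate. The identical bookkeeping for part (b), where $d=2$ and $b=2-2a$, leaves $ab=2a(1-a)\geq 0$ only for $a\in\{0,1\}$, i.e.\ $\O(1,0)$ and $\O(0,2)$.

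The second, and main, step is to show that there is no actual wall for $v$ inside $\beta<\frac14$. Granting this, Proposition \ref{propactualwalls}(iii) makes the set of semistable objects constant on the connected region, so it equals its large-volume value $\{\O(0,1)\}$; there are no strictly semistable objects either, since these would themselves force a wall. Suppose a wall were realized by $0\to F\to E\to Q\to 0$ with $F,Q$ semistable of the same slope, and write the invariants as $(r_F,d_F,c_F)$ and $(r_Q,d_Q,c_Q)$, so that $r_F+r_Q=1$, $d_F+d_Q=1$, $c_F+c_Q=0$. Lemma \ref{bounds} confines $H\ch_1^\beta(F)$ and $H\ch_1^\beta(Q)$ between $0$ and $H\ch_1^\beta(E)=1-4\beta$ at the wall, while the Bogomolov inequality (Proposition \ref{bogomolov}) gives $\Delta_H(F),\Delta_H(Q)\geq 0$. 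Because $\Delta_H$ is the quadratic form witnessing the support property, its restriction to the plane spanned by $\ch F$ and $\ch E$ is Lorentzian, and the same-slope hypothesis puts $\ch F$ and $\ch Q$ in one nappe of its positive cone; the reverse Cauchy--Schwarz inequality then yields the subadditivity $\Delta_H(F)+\Delta_H(Q)\leq\Delta_H(E)=d^2-8\rk c=1$. Since $\Delta_H=d^2-8rc\in\Z_{\geq 0}$, this leaves only finitely many numerical possibilities for $(r_F,d_F,c_F)$.

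Finally I would eliminate each surviving possibility exactly as in Proposition \ref{numwalls1}: for each candidate one computes the center and radius of the associated wall from $\nu_{\alpha,\beta}(F)=\nu_{\alpha,\beta}(E)$ and checks that either the radius is not real, or the semicircle does not fit inside $\beta<\frac14$, or the bounds of Lemma \ref{bounds} fail somewhere along it. A representative case is $F=\O$, with invariants $(1,0,0)$: there $\nu_{\alpha,\beta}(\O)=\nu_{\alpha,\beta}(E)$ collapses to $\alpha^2+\beta^2=0$, so the numerical wall degenerates to the origin and is never actual --- reflecting that the only extension it could produce is the stable line bundle itself. This case analysis is the main obstacle, since the discriminant bound by itself does not rule out a candidate numerically; one genuinely needs the wall-location computation, and it is here that the choice $H=(1,2)$ (hence the factor $H^2=4$ and the threshold $\beta=\frac14$, resp.\ $\frac12$) separates the relevant classes. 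Part (b) is structurally identical, with $\Delta_H(E)=4$ enlarging the finite list of cases but not the difficulty of any single one.
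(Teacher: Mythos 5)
Your first step (identifying the candidates at the large volume limit) is correct and agrees in substance with the paper, which uses the embedding $E'\hookrightarrow E'^{\vee\vee}\simeq \O$ where you use $\mathcal{I}_Z(a,b)$ and $\ell(Z)=ab\geq 0$; the only slip there is the phrase ``finitely many full Chern characters $(1,(a,b),0)$ with $2a+b=d$'' --- there are infinitely many such classes, and finiteness emerges only after imposing $ab\geq 0$, as your own computation then does. The genuine gap is in your second step, which is the actual content of the lemma: you never prove that there are no walls in the region $\beta<\frac{1}{4}$. The subadditivity $\Delta_H(F)+\Delta_H(Q)\leq\Delta_H(E)=1$ is a true (if external to this paper) fact, but it does \emph{not} leave ``only finitely many numerical possibilities for $(r_F,d_F,c_F)$'': infinitely many integral triples satisfy $0\leq d^2-8rc\leq 1$, for instance $(r,1,0)$ and $(r,0,0)$ for every $r\geq 1$, and Lemma \ref{bounds}, applied at unspecified points of a wall whose location itself depends on the candidate, does not cut this down to a finite list. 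Consequently the elimination you defer (``I would eliminate each surviving possibility exactly as in Proposition \ref{numwalls1}''), and which you yourself identify as the main obstacle, cannot even be organized as a finite case analysis; the core of the proof is missing.

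What you are missing is the single geometric observation that the paper uses to kill all walls simultaneously, with no enumeration. For invariants $(1,1,0)$ and $H=(1,2)$ the hyperbola $\nu_{\alpha,\beta}=0$ is $2(\beta^2-\alpha^2)-\beta=0$, whose left branch passes through the origin. By Theorem \ref{structureofwalls}(iii)--(iv), every semicircular wall on the left of the vertical wall $\beta=\frac{1}{4}$ has its top point $(\beta_c,R)$ on this branch, so $R^2=\beta_c^2-\beta_c/2>\beta_c^2$ when $\beta_c<0$; hence $R>|\beta_c|$ and the wall contains a point with $\beta=0$ and $\alpha>0$. At that point Lemma \ref{bounds} forces $0<d_F<1$ for any destabilizing subobject, impossible for an integer, so there are no walls at all; semistability is then constant on the connected region $\{\alpha>0,\ \beta<\frac{1}{4}\}$ and your step 1 finishes the proof. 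In part (b) the same argument gives $0<d_F<2$, hence $d_F=1$, and one further short argument is needed (the paper's): the centers of all walls have negative $\beta$-coordinate, the destabilizer of a putative largest wall must have positive rank by the long exact sequence in cohomology applied to torsion-free sheaves, and these two facts force $c_F\geq 0$, while Bogomolov gives $1-8r_Fc_F\geq 0$, i.e.\ $c_F\leq 0$; thus $c_F=0$, contradicting the existence of a largest wall. I recommend replacing your discriminant-based step entirely with this argument.
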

\begin{proof} \begin{itemize}
\item[(a)] By definition, $\beta=\dfrac{1}{4}$ is the vertical wall for any class of invariants $(r,d,c)=(1,1,0)$ when $H=(1,2)$, and the sheaf $\O(0,1)$ is in $\Coh^\beta(X)$ for $\beta<\dfrac{1}{4}$. Now let $E$ be an object of a certain class $w$ with those invariants: the equation of the hyperbola $\nu_{\alpha,\beta}(E)=0$ is
\[
2(\beta^2-\alpha^2)-\beta=0
\]
so that the left branch intersects the $\alpha$-axis at the origin; hence by Theorem \ref{structureofwalls} every numerical wall will have a point on it with $\beta=0$.
Therefore, for there to be a wall given by a subobject $F$, by Lemma \ref{bounds} we would have that $0<d(F)<1$: since $d$ must be an integer, we conclude that there are no walls and therefore $E$ is always stable or unstable on either side of the vertical wall. Now suppose $E$ is stable on the left side of the vertical wall: by the Large Volume Limit, $E$ must be a Gieseker (semi)stable sheaf of class $w$. 
Given those invariants, there exists $k$ an odd integer such that
\[w=\left(1,\left(\dfrac{1-k}{2},k\right),0\right), 
\]
so then $E'=E\tensor \O\left(-\dfrac{1-k}{2},-k\right)$ is a torsion-free sheaf such that 
\[
\ch(E')= \left(1,(0,0),-\dfrac{k(1-k)}{2}\right).
\]
By embedding $E'$ into its double dual $E'^{\vee\vee}\simeq \O$ we must have $-\dfrac{k(1-k)}{2} \leq 0$, and the only possibility is $k=1$ which yields $\ch(E')=(1,(0,0),0)$. Then $E'\simeq \O$ and $E\simeq \O(0,1)$.
\item[(b)] Again by definition, the vertical wall for all classes of invariants $(r,d,c)=(1,2,0)$ is given by $\beta=\dfrac{1}{2}$, and the sheaves $\O(1,0)$ and $\O(0,2)$ belong to $\Coh^\beta(X)$ for $\beta<\dfrac{1}{2}$. Let $E$ be an object of class $w$ with those invariants, then the equation of the hyperbola $\nu_{\alpha,\beta}(E)=0$ is given by
\[
\beta^2-\alpha^2-\beta=0
\]
while the equation of a potential wall for a subobject $F$ of invariants $(r',d',c')$ is 
\[
(d'-2r')(\beta^2-\alpha^2)-2c\beta+c=0,
\]
so that it has center at the point $\left(\dfrac{c'}{d'-r'},0\right)$. As before, the left branch of the hyperbola crosses the axis at the origin, so that all semicircular walls have a point  of coordinate $\beta=0$ thanks to which we get the inequality $0<d'<2$, hence $d'=1$; moreover, since the hyperbola crosses all the walls at their top point, we have that the centers all have negative $\beta$-coordinate.
Now assume we are looking for the largest wall on the left of the vertical wall: since by Large Volume Limit the moduli in the outermost chamber consists only of torsion-free sheaves, from the long exact sequence in cohomology the destabilizing object for the biggest wall can only have $r'>0$.

Putting everything together we get $c'\geq 0$, but then the Bogomolov Inequality for $F$ yields $1-8r'c'\geq 0$, which in turn implies $c'\leq 0$; this means that $c'=0$ and there is no largest wall, so there's no wall at all and the only tilt-semistable objects are torsion-free sheaves with the given invariants.

Now we just repeat the last part of the argument from $(a)$: given those invariants, there exists $k$ an even integer such that
\[\ch(E)=\left(1,\left(\dfrac{2-k}{2},k\right),0\right), 
\]
so then $E'=E\tensor \O\left(-\dfrac{2-k}{2},-k\right)$ is a torsion-free sheaf such that 
\[
\ch(E')= \left(1,(0,0),-\dfrac{k(2-k)}{2}\right).
\]
This time, this yields two possibilities: $k=0$ or $k=2$, so that $E\simeq \O(1,0)$ or $E\simeq \O(0,2)$.
\end{itemize}
\end{proof}

We are now ready to describe all the walls for $\M$; see also \ref{fig1} below.

\begin{thm}\label{main1} Let $v=(0, (2,3),-3)$ and $H=(1,2)$; there are three walls and four chambers for $\M_\sigma(v)$ on $\P^1\times\P^1$ in the $(\alpha,\beta)$-plane. From the outermost to the innermost, the walls are given by the following pairs of objects:
\begin{itemize}
\item[(i)] $\mathcal{W}_2: \O(1,0), \ \O(-1,-3)[1]$;
\item[(ii)] $\mathcal{W}_1: \O(0,1), \ [\O(-1,-2)\oplus\O(-2,-1) \to \O(-1,-1)]$;
\item[(iii)] $\mathcal{W}_0: \mathcal{I}_{p,q}(1,1), \ \O(-1,-2)[1]$.
\end{itemize}
In the same order, the moduli spaces corresponding to each chamber are given by
\begin{itemize}
\item[(i)] the Simpson moduli space $\M=\M(0, (2,3), 5m+2)$;
\item[(ii)] a projective variety $\M'$ obtained from $\M$ by contracting $\M_2$ and replacing it with $\M_2'\simeq \P^1$;
\item[(iii)] a projective variety $\M''$ isomorphic to a GIT quotient, obtained from $\M'$ by contracting a $\P^{10}$-bundle on $\P^1\times\P^1$ onto its base;
\item[(iv)] the empty set $\emptyset$.
\end{itemize}
\end{thm}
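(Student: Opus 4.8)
The plan is to descend from the Gieseker chamber and cross the walls one at a time, using the semistable-extension picture of Remark \ref{remarkwalls} to track how the moduli space changes. By Theorem \ref{lvl} (note $\Delta_H(v) = 49 \geq 0$) the outermost chamber realizes $\M_\sigma(v)=\M$, so it remains to locate the walls below it. First I would pin down the walls above $R_0$: by Proposition \ref{numwalls1} the only candidates carry a sub- or quotient object with invariants $(1,1,0)$ or $(1,2,0)$, and substituting into the radius formula (\ref{radius}) gives $R^2 = 30/49$ for $(1,2,0)$ and $R^2 = 39/98$ for $(1,1,0)$. Since $30/49 > 39/98 > 16/49 = R_0^2$, these are the outer wall $\mathcal{W}_2$ and the middle wall $\mathcal{W}_1$, in that order; together with $\mathcal{W}_0$ (radius exactly $R_0$) this accounts for all three walls, and because $\ch_0(v)=0$ Theorem \ref{structureofwalls}(v) makes them concentric, hence nested, cutting out exactly four chambers.

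Next I would identify the destabilizing objects. At $\mathcal{W}_2$, Lemma \ref{mainlemma}(b) forces the $(1,2,0)$-object to be $\O(1,0)$ or $\O(0,2)$; I would rule out $\O(0,2)$ by noting $\Hom(\O(0,2),\mathcal{E}) = 0$ for the sheaves $\mathcal{E} \simeq \O_C(1,0)$ of $\M_2$ (the restriction $\O_C(1,-2)$ has negative degree on $C \in |\O(2,3)|$), whereas $\O(1,0)$ does map in, matching resolution (\ref{res3}); a Chern-character computation then gives the quotient $\O(-1,-3)[1]$, and both objects are stable, being (a shift of) a line bundle. Similarly Lemma \ref{mainlemma}(a) forces the $(1,1,0)$-object at $\mathcal{W}_1$ to be $\O(0,1)$, and subtracting Chern characters yields the quotient $[\O(-1,-2)\oplus\O(-2,-1)\to\O(-1,-1)]$; at $\mathcal{W}_0$ the destabilizing pair is the one already exhibited for the open stratum, $\mathcal{I}_{p,q}(1,1)$ and $\O(-1,-2)[1]$. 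In each case I would confirm the numerical wall is actual by exhibiting the destabilizing sequence and invoking Theorem \ref{structureofwalls}(vi).

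The birational picture then follows from Remark \ref{remarkwalls}: across each wall the moduli space is a projective bundle over the common sub-locus, with fibers the projectivized $\Ext^1$ in the two directions. At $\mathcal{W}_2$ the destabilized locus is $\M_2 \simeq \P(\Hom(\O(-1,-3),\O(1,0))) = \P(H^0(\O(2,3))) = \P^{11}$, replaced by $\P(\Ext^1(\O(1,0),\O(-1,-3)[1])) = \P(\Ext^2(\O(1,0),\O(-1,-3))) \simeq \P^1 = \M_2'$, which gives $\M'$. At $\mathcal{W}_1$ the analogous $\Ext$ computation should exhibit a $\P^{10}$-bundle over $\P^1\times\P^1$ inside $\M'$ being contracted onto its base, producing $\M''$; and at $\mathcal{W}_0$ the open stratum is destabilized, while the extensions in the reverse direction produce no semistable object of class $v$, so the innermost chamber is empty.

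The main obstacle is this last point together with the precise $\Ext$ bookkeeping at $\mathcal{W}_1$: I expect the identification of the $\P^{10}$-bundle structure over $\P^1\times\P^1$ and the GIT description of $\M''$, as well as the vanishing needed for emptiness below $\mathcal{W}_0$, to rest on the dimension counts and Ext-group calculations deferred to the Appendix. Everything else reduces to the numerical bounds of Proposition \ref{numwalls1}, Lemma \ref{mainlemma}, and the general wall-structure results of Section 2.
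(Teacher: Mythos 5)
Your strategy is the same as the paper's — Theorem \ref{lvl} for the Gieseker chamber, Proposition \ref{numwalls1} together with the radius formula (\ref{radius}) to list and order the candidate walls (your values $R^2=30/49$, $39/98$, $16/49$ are correct), Lemma \ref{mainlemma} plus stability of line bundles to identify the destabilizers, and Remark \ref{remarkwalls} with Ext-computations for the surgeries. But there is a genuine gap running through the wall-by-wall analysis: at each wall you only verify that the expected stratum \emph{is} destabilized, never that the remaining objects are \emph{not}. To conclude that crossing $\mathcal{W}_2$ contracts exactly $\M_2$, one needs $\Hom(\O(1,0),E)=\Hom(\O(0,2),E)=0$ for every $E\in\M_0\cup\M_1$; your degree argument on $\O_C(1,-2)$ only concerns $\M_2$ (and is itself delicate when $C$ is reducible), whereas the paper's Appendix items (i)--(ii) handle all strata via the resolutions of Theorem \ref{maicanstrata1}. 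The omission is more serious at $\mathcal{W}_1$: besides $\Hom(\O(0,1),E)=0$ for $E\in\M_0$ (Appendix (iii)), one must check that the stratum $\M_2'$ created by the first crossing is untouched. Its points are honest two-term complexes (extensions of $\O(1,0)$ by $\O(-1,-3)[1]$), so Proposition \ref{subofsheaf} does not apply to them and $\O(0,1)$ could a priori occur as either a subobject or a quotient; the paper excludes both possibilities by long-exact-sequence-in-cohomology arguments inside the proof. Without these exclusions the identification of the chambers with $\M$, $\M'$, $\M''$ as stated simply does not follow, and none of this is covered by the Ext-bookkeeping you defer to the Appendix.

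The second gap is the emptiness claim below $\mathcal{W}_0$. The vanishing $\Ext^1(\mathcal{I}_{p,q}(1,1),\O(-1,-2)[1])=0$ (Appendix (vi)) only shows that the open stratum produces no semistable objects after the crossing; $\M''$ also contains the two lower-dimensional components ($\P^1$ and the $\P^1\times\P^1$ base of the contracted bundle), and no Appendix computation addresses them. The paper disposes of them with a separate structural argument: if they survived, the crossing would yield a birational map from the $13$-dimensional $\M''$ to a moduli space of strictly smaller dimension, a contradiction. Some argument of this kind (or a direct destabilization of those loci at $\mathcal{W}_0$) must be supplied; as written, your proposal proves only that the generic point of $\M''$ disappears, not that $\M_\sigma(v)=\emptyset$.
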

\begin{proof} By Theorem \ref{lvl} we know that there are finitely many walls and in the outermost chamber we have $\M_\sigma(v)=\M$; by Proposition \ref{numwalls1}, we also know that the first potential wall we encounter from the outside in corresponds to a short exact sequence of semistable objects $0\to F \to E \to Q\to 0$ with $E\in \M$ and one between $F$ and $Q$ of invariants $(1,2,0)$; by Proposition \ref{subofsheaf} $F$ is a sheaf so it must be $F$ to have those invariants.

Since by (\ref{radius}) $\mathcal{W}_2$ is all on the left of $\beta=\dfrac{1}{2}$, by Lemma \ref{mainlemma} we have that $F\simeq \O(1,0)$ or $F\simeq \O(0,2)$; now using the resolutions from Theorem \ref{maicanstrata1} we obtain $\Hom(\O(0,2),E)=0$ for all $E\in\M$, so that necessarily $F\simeq \O(1,0)$. Moreover, using the same resolutions we also see that $\Hom(\O(1,0),E)=0$ for $E\not \in \M_2$, while clearly $\O(1,0)$ is a subobject for $E \in \M_2$, with quotient $Q\simeq \O(-1,-3)[1]$ (see Appendix for all the computations). 

Therefore $\mathcal{W}_2$ is an actual wall for $\M_2$ and on the inside of this wall $\M_2$ is replaced by the subvariety $\M_2' = \P(\Ext^1(\O(1,0),\O(-1,-3)[1]) \simeq \P(\Ext^2(\O(1,0),\O(-1,-3)) \simeq \P(\Hom(\O(-1,-3),\O(-1,-2))\simeq \P^1$, so that we obtain $\M_\sigma(v)=\M'$.

According to Proposition \ref{numwalls1}, the next potential wall we encounter is $\mathcal{W}_1$; suppose the wall is given by $0\to F\to E \to Q \to 0$, then by Lemma \ref{mainlemma} one between $F$ and $Q$ must be $\O(0,1)$ since $\mathcal{W}_1$ entirely lies on the left of $\beta=\dfrac{1}{4}$. Now $E\in \M'$ is still a sheaf unless $E\in \M_2'$, so first we show that the stratum $\M_2'$ is not involved in this second wall-crossing. Indeed if $\O(0,1)$ was a quotient of $E\in \M_2'$, from the long exact sequence in cohomology we would get
\[
0\to H^{-1}(F) \to \O(-1,-3) \to 0 \to H^0(F) \to \O(1,0) \to \O(0,1) \to 0,
\]
and we get a contradiction from the fact that there is no map of sheaves between $\O(1,0)$ and $\O(0,1)$; analogously, if $\O(0,1)$ is a subobject we get
\[
0 \to \O(-1,-3) \to H^{-1}(Q) \to \O(0,1) \xrightarrow{\phi} \O(1,0) \to H^0(Q)\to 0,
\]
and again since $\phi=0$ we get a short exact sequence 
\[
0 \to \O(-1,-3) \to H^{-1}(Q) \to \O(0,1)\to 0,
\]
which yields a contradiction once we shift it by 1, since $\O(-1,-3)[1]$ and $H^{-1}(Q)[1]$ are in the heart by definition along $\mathcal{W}_1$, but $\O(0,1)[1]$ is not.

So we are left with the case where $E$ is a sheaf and $\O(0,1)$ is the destabilizing subobject; in a similar fashion as before (see Appendix) we can prove that $\Hom(\O(0,1),E)=0$ for $E\in \M_0$, so that the open stratum is not involved in this wall crossing either, leaving us with $\M_1$ which clearly has $\O(0,1)$ as a subobject according to (\ref{res2}). Hence $\mathcal{W}_1$ is an actual wall for all the objects in $\M_1$ and the quotient $Q$ is derived equivalent to the complex $[[\O(-1,-2)\oplus\O(-2,-1) \xrightarrow{\phi} \O(-1,-1)]$, with the only condition that both components of $\phi$ are non-zero. Now $\Hom(\O(-1,-2),\O(-1,-1))=\Hom(\O(-2,-1),\O(-1,-1))\simeq \C^2$, so that there is a $\P^1\times\P^1$-worth of possible quotients obtained by varying $\phi$, and one can use this resolution to compute $\Ext^1(Q,\O(0,1))\simeq\C^{11}$ (see Appendix), so that $\M_1$ is a $\P^{10}$-bundle over $\P^1\times\P^1$.
When crossing on the other side of $\mathcal{W}_1$, $\M_1$ is replaced by the space of extensions in the other direction; since $\Ext^1(\O(0,1),Q)\simeq \C$ (see Appendix), we have that the wall crossing just contracts the $\P^{10}$-bundle onto its base. The space $\M''$ obtained this way therefore has an open subset isomorphic to $\M_0$, and the complement is given by two disjoint components isomorphic to $\P^1$ and $\P^1\times\P^1$ respectively: by Proposition 4.1 and 4.2 in \cite{maican1}, $\M''$ is isomorphic to a GIT quotient $W/G$ of a certain subset $W \subset \Hom(\O(-1,-2)\oplus\O(-1,-1),\O^2)$ by the group 
\[
G= (\Aut(\O(-1,-2)\oplus\O(-1,-1))\times \Aut(\O^2))/ \C^*.
\]

Finally, we know again from Theorem \ref{maicanstrata1} that the sheaf $\mathcal{I}_{p,q}(1,1)$ is a destabilizing subobject for the open stratum at $\mathcal{W}_0$, with quotient $Q\simeq \O(-1,-2)[1]$; since one has $\Ext^1(\mathcal{I}_{p,q}(1,1),\O(-1,-2)[1])=0$ (see Appendix), we see that $\mathcal{W}_0$ is a collapsing wall as expected: in fact, when crossing this wall the open stratum vanishes and this means the other strata vanish as well, otherwise we would get a birational map from $\M''$ to a space of lower dimension. This proves that $\M_\sigma(v)$ is empty once we cross the wall $\mathcal{W}_0$ and it concludes the proof.
\end{proof}

\begin{rmk}
The destabilizing subobjects appearing in Theorem \ref{main1} can be recovered from Theorem \ref{maicanstrata1} as maximal linear strands in the resolutions for each strata. This is also true for the moduli spaces in the remaining sections.
\end{rmk}
\vspace{-0.3cm}
\begin{figure}[htbp]
\centering
\includegraphics[scale=0.19]{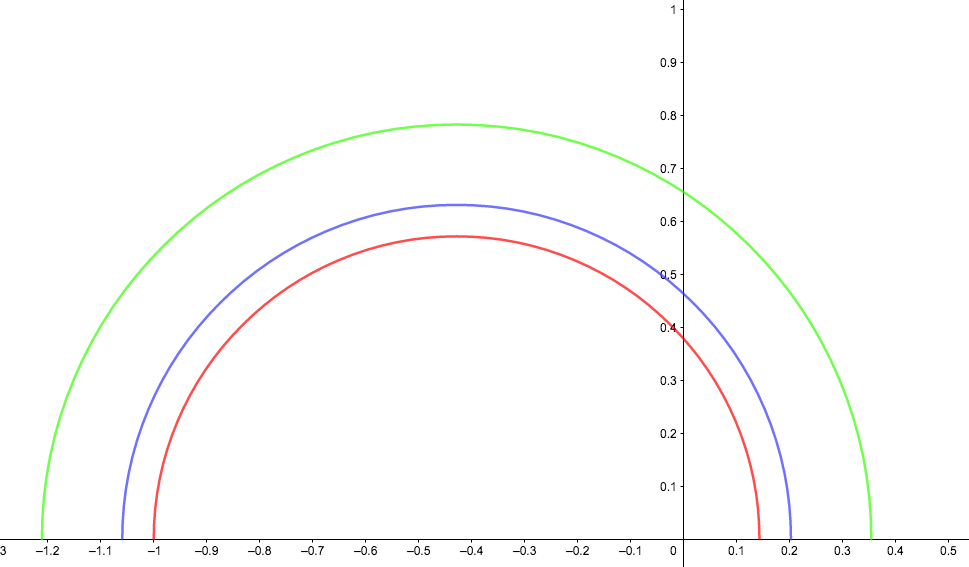}
\vspace{-0.25cm}
\caption{The walls for $\M(0, (2,3), 5m+1)$ \label{fig1}}
\end{figure}
\vspace{-0.3cm}
\subsection{Wall crossing for $\mathbf{M}(0,(2,3),5m+1)$}
Let $\N= \M(0,(2,3),5m+1)$; by \cite{simpson} and \cite{lepot} we have that $\N$ is a projective variety of dimension 13. As before, we have a structure Theorem from \cite{maican2} describing the strata of $\N$ in terms of minimal resolutions:

\begin{thm}\label{maicanstrata2}
There exists a decomposition of $\N$ into four strata $\N_0$, $\N_1$, $\N_2$ and $\N_3$. The strata have the following properties: 
\begin{itemize} 
\item[(i)] $\N_0$ is open in $\N$ and it corresponds to the set of sheaves $\mathcal{E}$ with a resolution of the form
\begin{equation}
0\to\O(-1,-2)^2 \xrightarrow{\phi} \O(0,-1)\oplus \O \to \mathcal{E} \to 0, \label{res1'}
\end{equation}
where $\phi_{12}$ and $\phi_{22}$ define a subscheme of length 2;
\item[(ii)] $\N_1$ is a closed subvariety of codimension 1 isomorphic to a $\P^9$-bundle over $\P^2\times\P^1$, and it corresponds to the set of sheaves $\mathcal{E}$ with a resolution of the form
\begin{equation}
0\to\O(-2,-1)\oplus\O(-1,-3) \xrightarrow{\phi} \O(-1,-1)\oplus\O \to \mathcal{E} \to 0,  \label{res2'}
\end{equation}
having $\phi_{11}\neq 0$ and $\phi_{12}\neq 0$;

\item[(iii)] $\N_2$ is a closed subvariety of codimension 2 isomorphic to $\P^{11}$, and $\mathcal{E}\in\N_2$ if and only if $\mathcal{E}\simeq \O_C(0,1)$ for $C\in |\O(2,3)|$. Equivalently, it corresponds exactly to the set of sheaves $\mathcal{E}$ with resolution
\begin{equation}
0\to \O(-2,-2)\to \O(0,1) \to \mathcal{E}\to 0. \label{res3'}
\end{equation}
\item[(iv)] $\N_3$ is a closed subvariety of codimension 3 isomorphic to a $\P^{1}$-bundle over $\P^8\times\P^1$, and $\mathcal{E}\in\N_3$ if and only if $\mathcal{E}$ fits in an extension 
\[
0\to \O_D \to \mathcal{E}\to \O_L \to 0,
\]
where $D\in|\O(2,2)|$ and $L\in|\O(0,1)|$. Equivalently, it corresponds to the set of sheaves $\mathcal{E}$ with resolution
\begin{equation}
0\to \O(-2,-2)\oplus \O(0,-1)\to \O^2 \to \mathcal{E}\to 0,\label{res4'}
\end{equation}
\end{itemize}
The strata are all disjoint except for $\N_2$and $\N_3$ which intersect along a subvariety isomorphic to $\P^8\times\P^1$ consisting of sheaves $\mathcal{E}\simeq \O_C(0,1)$ with $C=D \cup L$.
\end{thm}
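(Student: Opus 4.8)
The plan is to follow the same method underlying Theorem \ref{maicanstrata1}: classify the Gieseker-semistable sheaves in $\N$ by their minimal locally free resolutions and read off the stratification from the jumps in the relevant cohomology groups. The main tool is a Beilinson-type spectral sequence on $X=\P^1\times\P^1$ associated to a full strong exceptional collection of line bundles; it resolves every $\mathcal{E}\in\N$ by a two-term complex of direct sums of line bundles, whose terms are controlled by the groups $H^i(\mathcal{E}(a,b))=\Ext^i(\O(-a,-b),\mathcal{E})$ for the finitely many relevant twists $(a,b)$. The ranks of these cohomology groups are the discrete invariants along which $\N$ stratifies.

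First I would determine the cohomology table of a semistable $\mathcal{E}$. From Riemann--Roch and the invariants $v=(0,(2,3),-4)$ with $\chi(\mathcal{E})=1$ one computes every Euler characteristic $\chi(\mathcal{E}(a,b))$; since $\mathcal{E}$ is supported on a curve, only $H^0$ and $H^1$ can be nonzero, so $h^0(\mathcal{E}(a,b))$ is the single free parameter on each twist. Gieseker semistability with respect to $\O(1,1)$ then forces the vanishing of the ``wrong'' cohomology: a nonzero section of a sufficiently negative twist would span a subsheaf violating the reduced-Hilbert-polynomial bound. For the generic sheaf these vanishings are maximal, the spectral sequence degenerates to the minimal resolution \eqref{res1'}, and the open stratum $\N_0$ is exactly the locus where the degeneracy scheme of $\phi$ (cut out by $\phi_{12},\phi_{22}$) has the expected length $2$.

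The remaining strata arise from the special sheaves for which one cohomology rank jumps; each jump forces a distinguished sub- or quotient sheaf and hence one of the alternative minimal resolutions \eqref{res2'}--\eqref{res4'}, and the geometric identifications follow by parametrizing the data of each resolution. A sheaf in $\N_2$ is of the form $\O_C(0,1)$, determined by its support $C\in|\O(2,3)|$, whence $\N_2\simeq\P^{11}$ in agreement with \eqref{res3'}. A sheaf in $\N_3$ is an extension $0\to\O_D\to\mathcal{E}\to\O_L\to0$ with $D\in|\O(2,2)|\simeq\P^8$ and $L\in|\O(0,1)|\simeq\P^1$; computing $\Ext^1(\O_L,\O_D)$ and checking it has constant rank $2$ exhibits $\N_3$ as a $\P^1$-bundle over $\P^8\times\P^1$. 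For $\N_1$ one reads off from \eqref{res2'} that the two fixed entries $\phi_{11}\in\P(H^0(\O(1,0)))=\P^1$ and $\phi_{12}\in\P(H^0(\O(0,2)))=\P^2$ give the base $\P^2\times\P^1$, while the remaining entries of $\phi$ modulo the automorphisms of source and target give the $\P^9$ fibre. A dimension count ($\dim\N_0=13$, with $\N_1,\N_2,\N_3$ of codimensions $1,2,3$) then confirms the stated structure.

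The point I expect to be the main obstacle is twofold: proving that these four resolution types are \emph{exhaustive and mutually exclusive}, and establishing the incidence relation $\N_2\cap\N_3\simeq\P^8\times\P^1$. The former requires a careful case-by-case control of the Beilinson $E_1$-page together with the semistability of every sub- and quotient sheaf produced, so as to rule out any further degeneration of the resolution. The latter amounts to identifying exactly the sheaves admitting \emph{both} descriptions: geometrically this happens precisely when the support curve degenerates as $C=D\cup L$ with $D\in|\O(2,2)|$ and $L\in|\O(0,1)|$ and $\O_C(0,1)$ restricts compatibly to the two components, so that the overlap is parametrized by $(D,L)\in\P^8\times\P^1$. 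Verifying that this is the only coincidence among the strata is the delicate part of the argument.
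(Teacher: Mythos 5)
This theorem is not proved in the paper at all: it is a structure theorem imported verbatim from \cite{maican2} (with the geometric identifications also appearing in \cite{moon2}), so there is no internal proof to compare against. What can be said is that your strategy --- classify Gieseker-semistable sheaves by the cohomology groups $H^i(\mathcal{E}(a,b))$ of finitely many twists, run a Beilinson-type spectral sequence for an exceptional collection of line bundles on $\P^1\times\P^1$ to produce two-term locally free resolutions, and stratify by the jumping of these cohomology ranks --- is exactly the method used in that literature, and your numerical bookkeeping is consistent (the dimension $13$, $|\O(2,3)|\simeq\P^{11}$ for $\N_2$, $|\O(2,2)|\times|\O(0,1)|\simeq\P^8\times\P^1$ with a $\P^1$ of extensions for $\N_3$, and the base $\P^2\times\P^1$ cut out by $[\phi_{12}]\in\P(H^0(\O(0,2)))$, $[\phi_{11}]\in\P(H^0(\O(1,0)))$ for $\N_1$).

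However, as a proof the proposal has a genuine gap, and it sits precisely where you yourself locate ``the main obstacle.'' Everything that makes this theorem nontrivial --- (a) that semistability forces the vanishings needed to make the spectral sequence degenerate to one of exactly these four resolution types, (b) that no further degenerations occur, so the list is exhaustive, (c) that the types are mutually exclusive except for the stated overlap, and (d) that $\N_2\cap\N_3$ is precisely the locus $\mathcal{E}\simeq\O_C(0,1)$ with $C=D\cup L$, isomorphic to $\P^8\times\P^1$ --- is announced as a plan rather than carried out. In particular, the case analysis of the possible cohomology tables $\bigl(h^0(\mathcal{E}(a,b))\bigr)$ compatible with semistability is the entire content of the argument in \cite{maican2}, and without it one cannot rule out, say, additional strata coming from sheaves with larger-than-expected $h^0$ of some twist, nor verify that the resolutions \eqref{res1'}--\eqref{res4'} with the stated open conditions on $\phi$ produce semistable sheaves (the converse direction, which you do not address: one must check stability of the cokernels, not just resolve the stable sheaves). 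Similarly, exhibiting $\N_1$ as a $\P^9$-bundle requires showing the assignment $\mathcal{E}\mapsto([\phi_{11}],[\phi_{12}])$ is well defined on isomorphism classes (i.e.\ invariant under $\Aut$ of source and target of the resolution) and that the fibres are genuinely projective spaces; this is plausible but not verified. So the proposal should be regarded as a correct identification of the proof architecture of the cited source, not as a proof.
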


In this section we will be using $H=(1,1)$ since we don't need to separate any walls, hence $H^2=2$; by Riemann-Roch, we have that for $\mathcal{E} \in \N$ the Chern character is given by $v=\ch \mathcal{E} = (0, (2,3),-4)$ and $H\cdot \ch^{\beta}\mathcal{E}=(0,5,-4-5\beta)$, so that by (\ref{generalradius}) the walls for $v$ in the $(\alpha,\beta)$-plane all have center and radius given by
\begin{equation}
C=\left(-\dfrac{4}{5},0\right) \qquad R=\sqrt{\dfrac{16}{25}+\dfrac{4d}{5r}+\dfrac{c}{r}}. \label{radius2} 
\end{equation}
By Theorem \ref{lvl} we also know that $\N$ appears as the moduli space $\M_\sigma(v)$ in the outermost chamber, after finitely many walls.

Here, the potential collapsing wall $\mathcal{W}'_0$ is given by the object $\O$, so that by (\ref{radius2}) it has radius $R_0=\dfrac{4}{5}$. The following Proposition deals with the potential walls that have radius bigger than $R_0$.

\begin{prop}\label{numwalls2}
There is only one possible numerical wall for the class $v=\ch E = (0,(2,3),-4)$ with radius bigger than $R_0$ when $H=(1,1)$. Indeed, if a sequence $0\to F \to E \to Q \to 0$ defines one of such walls we have that the invariants for $F$ (resp. $Q$) can only be $(r,d,c) = (1,1,0)$.
\end{prop}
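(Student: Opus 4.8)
The plan is to run exactly the same numerical argument used for Proposition \ref{numwalls1}, now with the data $H=(1,1)$, $H^2=2$, $H\ch_1^\beta(E)=5$ for all $\beta$ (since $\ch_0(E)=0$), and the collapsing wall $\mathcal{W}'_0$ of radius $R_0=\tfrac45$ centered at $\beta=-\tfrac45$. As in that proof, equation (\ref{walleq}) forces $r\neq 0$, and since $\ch_0(E)=0$ gives $r(F)+r(Q)=0$ while the statement is symmetric in $F$ and $Q$, I may assume that $F$ has positive rank $r\geq 1$.

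First I would pin down $r$ and $d$. The wall $\mathcal{W}'_0$ has center $-\tfrac45$ and radius $\tfrac45$, so its endpoints on the $\beta$-axis are $\beta=0$ and $\beta=-\tfrac85$; because $\ch_0(v)=0$, all walls are concentric semicircles by Theorem \ref{structureofwalls}(v), so any wall of radius $R>R_0$ meets the vertical lines $\beta=0$ and $\beta=-\tfrac85$ at positive $\alpha$. By Bertram's Lemma, $F$ is then a subobject at both of these points, and applying Lemma \ref{bounds} there (using $H\ch_1^\beta(F)=d-2\beta r$ against $H\ch_1^\beta(E)=5$) yields
\[
0<d<5 \qquad\text{and}\qquad 0<d+\tfrac{16}{5}r<5.
\]
The second inequality forces $d<5-\tfrac{16}{5}r$, which is negative for $r\geq 2$ and hence incompatible with $d>0$; thus $r=1$, and then $d<\tfrac95$ together with $d>0$ leaves only $d=1$.

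It remains to fix $c$, and here I would combine the two remaining constraints just as in Proposition \ref{numwalls1}. The Bogomolov inequality of Proposition \ref{bogomolov} for $F$, namely $\Delta_H(F)=d^2-2H^2rc=1-4c\geq 0$, gives $c\leq 0$. On the other hand the requirement $R>R_0$ reads $R^2>\tfrac{16}{25}$, which by (\ref{radius2}) with $r=d=1$ becomes $\tfrac45+c>0$, so $c\geq 0$. Hence $c=0$ and $F$ has invariants $(1,1,0)$. Since these invariants are now uniquely determined, (\ref{radius2}) fixes the radius at $R=\tfrac65$, so this is the only numerical wall of radius larger than $R_0$.

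The argument presents no real obstacle beyond the arithmetic: the only point requiring care is the choice of the two vertical test-lines $\beta=0$ and $\beta=-\tfrac85$ (the endpoints of $\mathcal{W}'_0$), which is precisely what makes the bounds coming from Lemma \ref{bounds} sharp enough to eliminate every $r\geq 2$ and leave a single admissible pair $(d,c)$.
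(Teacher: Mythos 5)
Your proof is correct and follows essentially the same route as the paper's: reduce to $r\geq 1$ by symmetry, test the wall at the two lines $\beta=0$ and $\beta=-\tfrac85$ via Bertram's Lemma and Lemma \ref{bounds} to force $(r,d)=(1,1)$, then combine the Bogomolov inequality $1-4c\geq 0$ with the radius condition $c>-\tfrac45$ and integrality to get $c=0$. The only (harmless) addition is that you also compute the resulting radius $R=\tfrac65$ explicitly.
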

\begin{proof}
The proof is similar to that of Proposition \ref{numwalls1}. As before, we can restrict to $r>0$; the two values for $\beta$ we get from $R>R_0$ are $\beta=-4/5+4/5=0$ and $\beta=-4/5-4/5=-8/5$; Bertram's Lemma and Lemma \ref{bounds} yield
\[0 < d < 5 \quad \text{and} \quad  -\dfrac{16}{5}r < d < 5 -\dfrac{16}{5}r,
\]
so that we get a contradiction for $r\geq 2$; if $r=1$ we can only have $d=1$.

Now if $r=1$ and $d=1$ the condition $R>R_0$ yields $c > -\dfrac{4}{5}$ and the Bogomolov Inequality for $F$ gives $1-4c\geq 0$, so that $c=0$.
\end{proof}
Let $\mathcal{W}'_1$ be the wall from the Proposition \ref{numwalls2}; we then have the analogue to Lemma \ref{mainlemma}
\begin{lem} \label{mainlemma2} Let $H=(1,1)$; for all $\alpha>0$ and $\beta<\dfrac{1}{2}$, the only tilt-semistable object with invariants $(1,1,0)$ are the line bundles $\O(0,1)$ and $\O(1,0)$.
\end{lem}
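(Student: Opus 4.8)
The plan is to follow the template of the proof of Lemma~\ref{mainlemma}(a), since the situation is completely analogous: the invariants $(1,1,0)$ now sit in the slice $H=(1,1)$ (so $H^2=2$), and $\beta=\tfrac12$ is exactly the vertical wall for any class with these invariants by Theorem~\ref{structureofwalls}(iv). First I would record that both candidate objects $\O(0,1)$ and $\O(1,0)$ do have invariants $(1,1,0)$, lie in $\Coh^\beta(X)$ for $\beta<\tfrac12$ (their twisted slope $H\ch_1^\beta=1-2\beta$ is positive there), and are $\sigma_{\omega,B}$-stable for every $\omega,B$ by the stability of line bundles quoted at the end of Section~2. Thus the real content is that these are the \emph{only} tilt-semistable objects of this class to the left of the vertical wall, and the strategy is to rule out all other walls and then invoke the Large Volume Limit.

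The heart of the argument is to show there are no semicircular walls for a class $w=(1,(a,b),0)$ in the region $\beta<\tfrac12$. I would compute the hyperbola $\nu_{\alpha,\beta}(E)=0$ for these invariants, which works out to
\[
\beta^2-\alpha^2-\beta=0,
\]
whose left branch meets the $\alpha$-axis at the origin. By Theorem~\ref{structureofwalls}(iii) the top point $(C,R)$ of any such semicircular wall lies on this hyperbola, so $R^2=C^2-C\ge0$; since the wall sits to the left of $\beta=\tfrac12$ this forces the center to satisfy $C\le0$, and then $R=\sqrt{C^2-C}\ge -C$ shows the wall meets the line $\beta=0$. At a point of the wall with $\beta=0$, a destabilizing subobject $F$ would have, by Lemma~\ref{bounds}, an integer $H\ch_1^0(F)=d(F)$ with $0<d(F)<d(E)=1$, which is impossible. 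Hence there are no walls in $\beta<\tfrac12$, and the stability of any object of class $w$ is constant throughout that region.

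It then remains to identify the stable objects. As there are no walls, the region $\beta<\tfrac12$ is a single chamber coinciding with the large-volume chamber, so by Theorem~\ref{lvl} every tilt-semistable object of class $w$ is a Gieseker-semistable sheaf, necessarily rank-one and torsion-free. Writing $\ch_1=(a,1-a)$ for some $a\in\Z$ (forced by $d=a+(1-a)=1$), I would twist by $\O(-a,-(1-a))$ to obtain a torsion-free sheaf $E'$ with $\ch(E')=(1,(0,0),-a(1-a))$, embed $E'$ into $E'^{\vee\vee}\simeq\O$, and conclude $-a(1-a)\le0$, i.e.\ $a\in\{0,1\}$. These two values give exactly $E\simeq\O(0,1)$ and $E\simeq\O(1,0)$, completing the classification.

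The step I expect to require the most care is the geometric reduction in the second paragraph, namely justifying that a semicircular wall to the left of the vertical wall has center $C\le0$ and hence necessarily crosses $\beta=0$, since this is what turns the constraint $0<d(F)<1$ into a genuine contradiction. Everything else (the twisted-slope computation placing the line bundles in the heart, and the double-dual argument) is routine and parallels Lemma~\ref{mainlemma} almost verbatim.
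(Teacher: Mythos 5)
Your proposal is correct and takes essentially the same approach as the paper: the paper's own proof of Lemma~\ref{mainlemma2} simply states that it is the argument of Lemma~\ref{mainlemma}(a) with minor adjustments for $H=(1,1)$, and your writeup carries out exactly those adjustments (the hyperbola $\beta^2-\alpha^2-\beta=0$ through the origin, the integrality contradiction $0<d(F)<1$ via Lemma~\ref{bounds}, then the Large Volume Limit and the double-dual twist argument yielding $a\in\{0,1\}$, i.e.\ $\O(0,1)$ and $\O(1,0)$). Your explicit justification that every semicircular wall left of the vertical wall has center $C\le 0$ and hence meets $\beta=0$ is a correct filling-in of a step the paper leaves implicit.
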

\begin{proof} The proof is the exact same one as Lemma \ref{mainlemma} $(a)$ with minor adjustments due to the different choice of $H$.
\end{proof}

We're now ready to prove the wall-crossing Theorem for $\N$; see also Figure \ref{fig2} below.
\begin{thm}\label{main2} Let $v=(0, (2,3),-4)$ and $H=(1,1)$; there are two walls and three chambers for $\M_\sigma(v)$ on $\P^1\times\P^1$ in the $(\alpha,\beta)$-plane. From the outermost to the innermost, the walls are given by the following pairs of objects:
\begin{itemize}
\item[(i)] $\mathcal{W}'_1: \O(0,1), \ \O(-2,-2)[1]$;
\item[(ii)] $\mathcal{W}'_0: \O, \ [\O(-1,-2)^2 \to \O(0,-1)]$;
\end{itemize}
In the same order, the moduli spaces corresponding to each chamber are given by
\begin{itemize}
\item[(i)] the Simpson moduli space $\N=\M(0, (2,3), 5m+1)$;
\item[(ii)] a projective variety $\N'$ isomorphic to a projective bundle over a blow-up of the Grassmannian $\Gr(2,4)$; $\N'$ is obtained from $\N$ by contracting the stratum $\N_2$ and replacing it with $\N'_2\simeq \P^1$;
\item[(iii)] the empty set $\emptyset$.
\end{itemize}
\end{thm}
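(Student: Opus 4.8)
The plan is to mirror the strategy of Theorem~\ref{main1}, tracking the strata of Theorem~\ref{maicanstrata2} as $\sigma$ descends from the Gieseker chamber. By the Large Volume Limit (Theorem~\ref{lvl}) there are finitely many walls and $\M_\sigma(v)=\N$ in the outermost chamber. Proposition~\ref{numwalls2} singles out $\mathcal{W}'_1$ as the only candidate wall above the collapsing radius $R_0=4/5$, coming from a sequence $0\to F\to E\to Q\to0$ with one of $F,Q$ of invariants $(1,1,0)$; by Proposition~\ref{subofsheaf} the subobject $F$ is the sheaf, so $F$ carries those invariants. Substituting $(1,1,0)$ into (\ref{radius2}) gives $\mathcal{W}'_1$ radius $6/5$, so its rightmost point sits at $\beta=-4/5+6/5=2/5<1/2$; the whole wall lies to the left of $\beta=1/2$ and Lemma~\ref{mainlemma2} forces $F\simeq\O(0,1)$ or $F\simeq\O(1,0)$.

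First I would pin down the line bundle and the stratum. Using the resolutions of Theorem~\ref{maicanstrata2} I would check in the Appendix that $\Hom(\O(1,0),E)=0$ for every $E\in\N$ (excluding $\O(1,0)$) and that $\Hom(\O(0,1),E)=0$ for $E$ in the strata $\N_0$ and $\N_1$. The subtle case is $\N_3$: there a nonzero map $\O(0,1)\to E$ does exist, but I expect it to factor through the torsion quotient $\O_L$ of the defining extension, hence to have a nonzero kernel $\O\in\Coh^\beta$ and so to fail to be a monomorphism in the tilt, so that $\N_3$ is destabilized along $\mathcal{W}'_1$ only where it meets $\N_2$. This leaves $\N_2$ as the unique affected stratum: from (\ref{res3'}) the sheaf sequence $0\to\O(-2,-2)\to\O(0,1)\to\O_C(0,1)\to0$ rotates in $\Coh^\beta$ to $0\to\O(0,1)\to\O_C(0,1)\to\O(-2,-2)[1]\to0$, exhibiting $Q\simeq\O(-2,-2)[1]$. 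By Serre duality on $X$ (where $K_X=\O(-2,-2)$),
\[
\Ext^1(\O(0,1),\O(-2,-2)[1]) \simeq \Ext^2(\O(0,1),\O(-2,-2)) \simeq \Hom(\O(-2,-2),\O(-2,-1))^\vee \simeq \C^2,
\]
so on the inner side of $\mathcal{W}'_1$ the stratum $\N_2$ is replaced by $\N'_2\simeq\P(\C^2)\simeq\P^1$.

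To confirm this is a genuine contraction I would compute the opposite extension group $\Ext^1(\O(-2,-2)[1],\O(0,1))\simeq\Hom(\O(-2,-2),\O(0,1))\simeq H^0(\O(2,3))\simeq\C^{12}$, whose projectivization $\P^{11}$ recovers $\N_2$. Since both $\O(0,1)$ and $\O(-2,-2)[1]$ are stable at the wall (line bundles being stable for all $\sigma_{\omega,B}$), Remark~\ref{remarkwalls} applies: $\mathcal{W}'_1$ collapses the $\P^{11}$ of extensions to the point $\{\O(0,1),\O(-2,-2)[1]\}$ and re-expands it as a $\P^1$, producing $\N'$. The identification of $\N'$ as a projective bundle over a blow-up of $\Gr(2,4)$ I would read off the open-stratum resolution (\ref{res1'}) — the top-row maps living in $\Hom(\O(-1,-2)^2,\O(0,-1))$ with $H^0(\O(1,1))\simeq\C^4$ producing the Grassmannian $\Gr(2,4)$ — together with the structural results of \cite{maican2} (and \cite{moon1},\cite{moon2}).

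Finally, for the collapsing wall $\mathcal{W}'_0$ the maximal linear strand of (\ref{res1'}) exhibits $\O$ as a destabilizing subobject of the open stratum $\N_0$, with quotient $Q\simeq[\O(-1,-2)^2\to\O(0,-1)]$. As in Theorem~\ref{main1}, I would check in the Appendix that the extension group governing the far side of $\mathcal{W}'_0$ vanishes; since the top-dimensional stratum then cannot survive, no stratum survives and $\M_\sigma(v)=\emptyset$ below $\mathcal{W}'_0$. The hard part will be the stratum-by-stratum $\Hom$/$\Ext$ bookkeeping at $\mathcal{W}'_1$, above all disentangling $\N_3$ along its intersection $\N_2\cap\N_3\simeq\P^8\times\P^1$ with the contracted stratum, together with the structural identification of $\N'$ with the projective bundle over the blown-up Grassmannian.
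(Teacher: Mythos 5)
Your proposal follows the paper's proof almost step for step: Large Volume Limit, Proposition~\ref{numwalls2} together with Proposition~\ref{subofsheaf} and Lemma~\ref{mainlemma2} to reduce the first wall to $\O(0,1)$ or $\O(1,0)$, the $\Hom$-vanishing computations to isolate $\N_2$, the count $\Ext^1(\O(0,1),\O(-2,-2)[1])\simeq\C^2$ producing the new $\P^1$, the appeal to \cite{moon2} for the global description of $\N'$, and the collapsing argument at $\mathcal{W}'_0$. The extra verifications you add (radius $6/5$, the reverse group $\Ext^1(\O(-2,-2)[1],\O(0,1))\simeq\C^{12}$ recovering $\N_2\simeq\P^{11}$) are correct.

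The genuine gap is your treatment of $\N_3$ at $\mathcal{W}'_1$. You concede that a nonzero map $\O(0,1)\to E$ exists for $E\in\N_3$ but claim it is harmless because its kernel $\O$ lies in $\Coh^\beta$, so the map fails to be a monomorphism in the tilt. This mechanism cannot carry the argument: $\O\in\mathcal{T}^\beta$ only for $\beta<0$, whereas $\mathcal{W}'_1$ (center $-4/5$, radius $6/5$) contains points with $0\le\beta<2/5$, where $\O\in\mathcal{F}^\beta$; there a sheaf map with kernel $\O$ and torsion cokernel \emph{is} a monomorphism in $\Coh^\beta$ (its cone has $H^{-1}=\O\in\mathcal{F}^\beta$ and $H^0\in\mathcal{T}^\beta$). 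By Theorem~\ref{structureofwalls}(vi), destabilization at one point of a numerical wall makes it actual everywhere; and since $E$ is Gieseker-stable, hence stable outside the wall by Theorem~\ref{lvl}, such a subobject would destabilize $E$ in the chamber \emph{inside} $\mathcal{W}'_1$ and all of $\N_3$ would disappear from $\N'$, contradicting the very statement you are proving. So if your premise were true the theorem would fail; what must be shown instead is the paper's assertion that $\Hom(\O(0,1),E)=0$ for every $E\notin\N_2$, and for $E\in\N_3\setminus\N_2$ this is exactly where the real work lies.

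That vanishing does hold, and the cleanest route is Gieseker semistability itself rather than the resolutions: a nonzero map from $\O(0,1)$ to a pure sheaf has image $\O_{D''}(0,1)$ for some effective divisor $D''$ of class $(a,b)\le(2,3)$, with reduced Hilbert polynomial $m+(2a+b-ab)/(a+b)$ (for $H=(1,1)$); the inequality $(2a+b-ab)/(a+b)\le 1/5$ forced by semistability of $E$ fails for every $(a,b)$ except $(2,3)$, in which case the image has the same Hilbert polynomial as $E$, so $E\simeq\O_C(0,1)\in\N_2$. (Equivalently, the lifting obstruction along $0\to\O_D\to E\to\O_L\to 0$ shows that $\O(0,1)\to\O_L$ lifts to $E$ only for the pushout extension, which is $\O_C(0,1)$.) Be warned that the Appendix-style computation you propose does not settle this stratum: applying $\Hom(\O(0,1),-)$ to resolution (\ref{res4'}) literally gives $\Ext^1(\O(0,1),\O(0,-1))=H^1(\O(0,-2))=\C$ with $\Ext^1(\O(0,1),\O^2)=0$, i.e.\ a \emph{nonzero} answer, so the naive long-exact-sequence argument that works for $\N_0$ and $\N_1$ breaks down precisely here and must be replaced by the semistability (or obstruction) argument above.
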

\begin{proof} By Theorem \ref{lvl} we know there are only finitely many walls and in the outermost chamber we have $\M_\sigma(v)=\N$; by Proposition \ref{numwalls2} we know the largest possible wall is given by $\mathcal{W}'_1$, and since $\N$ contains only sheaves we know that the destabilizing subobject at that wall must be either $\O(1,0)$ or $\O(0,1)$ in virtue of Proposition \ref{subofsheaf} and Lemma \ref{mainlemma2}, being $\mathcal{W}'_1$ all on the left of $\beta=\dfrac{1}{2}$.

Using the resolutions from Theorem \ref{maicanstrata2} we can compute $\Hom(\O(1,0),E)=0$ for all $E\in\N$ (see Appendix), so that we must have $F\simeq \O(0,1)$; similarly $\Hom(\O(0,1),E)=0$ unless $E\in \N_2$, for which the quotient is $Q\simeq \O(-2,-2)[1]$. Hence $\mathcal{W}_1'$ is an actual wall for the stratum $\N_2$ and by crossing the wall it gets replaced by the projectivization of $\Ext^1(\O(0,1), \O(-2,-2)[1]) = \Hom(\O,\O(0,1))=\C^2$. Since $\N_3$ intersects $\N_2$ we have that this wall-crossing also affects $\N_3$, and indeed one can see $\O(0,1)$ being a subobject of (\ref{res3'}) as $[\O(0,-1) \to \O]$, but this can only happen when the extension splits and indeed $E\simeq \O_C(0,1)$; with this interpretation, the newly obtained $\P^1$ exactly replaces the zero-section of the $\P^1$-bundle over $\P^8\times\P^1$ in $\N_3$.

From \cite{moon2}, we know this operation gives a birational map from $\N$ to a new space $\N'$ which is a $\P^9$-bundle over the blow-up of $\Gr(2,4)$ along a $\P^1$ parametrizing lines in $\P^3$ of type $(1,0)$.

Finally, $\mathcal{W}'_0$ is a collapsing wall for $\N'$: in fact, by Theorem \ref{maicanstrata2} we know that $\O$ is a destabilizing subobject for all the sheaves in the open stratum and also $Q\simeq [\O(-1,-2)^2 \to \O(0,-1)]$ for $E\in \N_0$; once we cross the wall we have $\Ext^1(\O,Q)=0$  (see Appendix) and this is enough to say that the open stratum vanishes and we get the empty set.
\end{proof}
\vspace{-0.2cm}
\begin{figure}[htbp]
\centering
\includegraphics[scale=0.20]{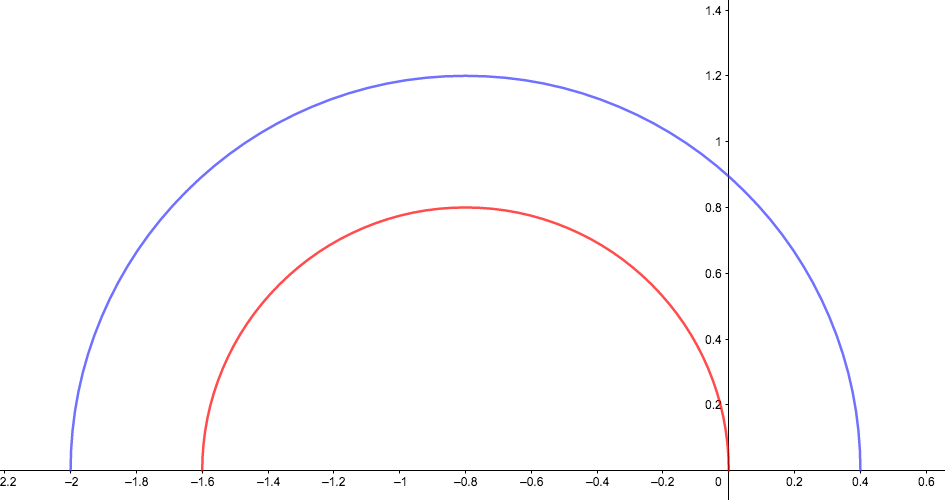}
\vspace{-0.2cm}
\caption{The walls for $\M(0, (2,3), 5m+2)$ \label{fig2}}
\end{figure}
\vspace{-0.2cm}
\subsection{Wall crossing for $\mathbf{M}(0,(2,2),4m+2)$}
This example is more straightforward than the previous two and it was already present in \cite{moon1} in some form; we report it here for the sake of completeness..
Let $\S= \M(0,(2,2),4m+2)$; by \cite{simpson} and \cite{lepot} we have that $\S$ is a projective variety of dimension 9. As before, we have a structure Theorem from \cite{moon1} describing the strata of $\S$ in terms of minimal resolutions:
\begin{thm}\label{maicanstrata3}
There exists a decomposition of $\S$ into three disjoint strata $\S_0$, $\S_1$ and $\S_2$. The strata have the following properties: 
\begin{itemize} 
\item[(i)] $\S_0$ is open in $\S$ and it corresponds to the set of sheaves $\mathcal{E}$ with a resolution of the form
\begin{equation}
0\to\O(-1,-1)^2 \to \O^2 \to \mathcal{E} \to 0; \label{res1''}
\end{equation}
\item[(ii)] $\S_1 \simeq \P^8$ is a divisor and it corresponds exactly to the set of sheaves $\mathcal{E}$ with a resolution
\begin{equation}
0\to\O(-2,-1)\to \O(0,1) \to \mathcal{E} \to 0;  \label{res2''}
\end{equation}
\item[(iii)] $\S_2\simeq\P^8$ is also a divisor and it corresponds exactly to the set of sheaves $\mathcal{E}$ with resolution
\begin{equation}
0\to \O(-1,-2)\to \O(1,0) \to \mathcal{E}\to 0. \label{res3''}
\end{equation}
\end{itemize}
\end{thm}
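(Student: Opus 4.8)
The plan is to prove Theorem~\ref{maicanstrata3} by classifying the sheaves $\mathcal{E}\in\S$ according to their minimal locally free resolutions, using a Beilinson-type resolution by line bundles on $\P^1\times\P^1$ (as in \cite{kapranov}) and following \cite{moon1}. First I would fix the numerics: by Riemann--Roch the Hilbert polynomial $4m+2$ forces $\ch(\mathcal{E})=(0,(2,2),-2)$, so every $\mathcal{E}\in\S$ is a pure one-dimensional Gieseker--Simpson semistable sheaf whose schematic support is a curve $C\in|\O(2,2)|$ and whose restricted Euler characteristic is fixed.

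Next I would apply the Beilinson monad associated to the full strong exceptional collection of line bundles on $\P^1\times\P^1$: for each $\mathcal{E}$ this presents $\mathcal{E}$ as the cohomology of a complex whose terms are sums of line bundles $\O(-i,-j)$, the multiplicities being the dimensions $h^q(\mathcal{E}\tensor\O(-i,-j))$. I would compute these dimensions from $\ch(\mathcal{E})$ together with purity and the semistability inequalities on sub- and quotient sheaves. Most of them are determined outright; the finitely many that can jump (by semicontinuity) are precisely what separates the strata, and ruling out the degenerate shapes leaves exactly the three resolutions (\ref{res1''}), (\ref{res2''}) and (\ref{res3''}).

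Then I would identify each stratum geometrically. For $\S_1$ and $\S_2$ the resolution is determined by a single map $\O(-2,-1)\to\O(0,1)$ (respectively $\O(-1,-2)\to\O(1,0)$), i.e. by the section of $\O(2,2)$ cutting out the support curve; since $h^0(\O(2,2))=9$ and this section is determined up to scalar, each locus is identified with $\P(H^0(\O(2,2)))\simeq\P^8$ and $\mathcal{E}\simeq\O_C(0,1)$ (respectively $\O_C(1,0)$). As $\dim\S=9$, both are divisors, so the open stratum $\S_0$ must be the one carrying the balanced resolution (\ref{res1''}), in which the support curve appears as the determinant of the defining $2\times 2$ matrix of sections of $\O(1,1)$. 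Disjointness is then automatic, since the three resolution shapes correspond to distinct cohomology tables.

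The hard part will be the completeness and rigidity of this classification: showing that the cohomological invariants take only the three allowed patterns on $\S$, i.e. ruling out all spurious resolution shapes via a careful use of purity and of the stability bounds on destabilizing subsheaves, and checking that along $\S_1$ and $\S_2$ the sheaf is genuinely forced to be a twisted structure sheaf $\O_C(\cdot,\cdot)$ with no extra moduli --- including the delicate cases where $C$ is reducible or non-reduced, where one must verify that semistability still pins down the isomorphism class. Matching these strata and their closure relations with the descriptions in \cite{moon1} then completes the argument.
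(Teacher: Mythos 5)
The first thing to note is that the paper does not prove Theorem \ref{maicanstrata3} at all: as the sentence introducing it says, it is a structure theorem quoted from \cite{moon1} (just as Theorems \ref{maicanstrata1} and \ref{maicanstrata2} are quoted from \cite{maican1} and \cite{maican2}), so there is no internal proof to compare yours against. That said, the route you sketch --- resolutions coming from the exceptional collection of line bundles on $\P^1\times\P^1$, cohomology tables $h^q(\mathcal{E}\tensor\O(-i,-j))$ controlled by purity and semistability, and strata defined as the jumping loci of these tables --- is exactly the Dr\'ezet--Maican-style method used in the cited literature, and your numerical checks are all consistent: $\ch(\mathcal{E})=(0,(2,2),-2)$, $\Hom(\O(-2,-1),\O(0,1))\simeq H^0(\O(2,2))\simeq\C^9$ so that each of $\S_1,\S_2$ is a $\P^8$, and $\dim\Hom(\O(-1,-1)^2,\O^2)-\dim\bigl(\Aut(\O(-1,-1)^2)\times\Aut(\O^2)/\C^*\bigr)=16-7=9=\dim\S$ for the open stratum.

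As a proof, however, the proposal has a genuine gap: everything that makes the theorem true is deferred, as you yourself acknowledge. Concretely: (a) the cohomology tables are never computed --- the vanishings forced by semistability (e.g. $H^0(\mathcal{E}(-1,-1))=0$ and the relevant $H^1$'s) are asserted, not derived; (b) exhaustiveness is not established --- note in particular that the shapes (\ref{res2''}) and (\ref{res3''}) involve the bundles $\O(-2,-1)$, $\O(0,1)$, $\O(-1,-2)$, $\O(1,0)$, which do not all lie in a single standard Beilinson window, so the passage from the raw Beilinson monad to these \emph{minimal} resolutions requires cancelling summands and is itself a nontrivial step, meaning "distinct cohomology tables give exactly these three resolutions" is not automatic; and (c) the claim that $\S_1$ and $\S_2$ \emph{exactly} equal $\P^8$ requires showing that the cokernel of \emph{every} nonzero map $\O(-2,-1)\to\O(0,1)$ is semistable, which demands the case analysis on reducible and non-reduced curves $C\in|\O(2,2)|$ (subsheaves supported on components of type $(1,0)$, $(0,1)$, $(1,1)$, doubled components, etc.) that you name but do not carry out. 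Since (a)--(c) constitute essentially the whole content of the statement, what you have is a correct plan rather than a proof; to finish one must either execute those computations or do what the paper does and simply cite \cite{moon1}.
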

\begin{rmk} Notice that $\S$ in this case is singular, since the Chern class of its elements is not primitive; this can be detected also from this Theorem, by interpreting $\S$ as a moduli of semistable objects: the singular locus corresponds to the strictly semistable objects in the open stratum that have a subobject of the form $[\O(-1,-1) \to \O]$, whose slope is always equal to that of $\mathcal{E}$.
\end{rmk}

In this section we go back to using $H=(1,2)$; by Riemann-Roch, we have that for $\mathcal{E} \in \S$ the Chern character is given by $v=\ch \mathcal{E} = (0, (2,2),-2)$ and $H\cdot \ch^{\beta}\mathcal{E}=(0,6,-2-6\beta)$, so that by (\ref{generalradius}) the walls for $v$ in the $(\alpha,\beta)$-plane all have center and radius given by
\begin{equation}
C=\left(-\dfrac{1}{3},0\right) \qquad R=\sqrt{\dfrac{1}{9}+\dfrac{d}{6r}+\dfrac{c}{2r}}. \label{radius3} 
\end{equation}
In this example, the potential collapsing wall $\mathcal{W}''_0$ is given by the object $\O^2$, so that by (\ref{radius3}) it has radius $R_0=\dfrac{1}{3}$. The following Proposition deals with the potential walls that have radius bigger than $R_0$.
\begin{prop}\label{numwalls3}
There are only two possible numerical walls for the class $v=\ch E = (0,(2,2),-2)$ with radius bigger than $R_0$ when $H=(1,2)$. Indeed, if a sequence $0\to F \to E \to Q \to 0$ defines one of such walls we have that the invariants for $F$ (resp. $Q$) can only be $(r,d,c) = (1,1,0)$ or $(1,2,0)$.
\end{prop}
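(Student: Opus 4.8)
The plan is to mirror the proof of Proposition \ref{numwalls1} step by step, the only genuine difference appearing at the very end. Since $\ch_0(v)=0$, Theorem \ref{structureofwalls}(v) guarantees that every wall for $v$ is a semicircle concentric at $C=(-1/3,0)$, and the collapsing wall $\mathcal{W}''_0$ has radius $R_0=1/3$. As before, the wall equation (\ref{walleq}) forces $r\neq 0$, and since $r(F)+r(Q)=0$ I may assume by symmetry that the subobject $F$ has $\rk(F)=r>0$. A wall with $R>R_0$ is then a strictly larger concentric semicircle, so it meets the two vertical lines $\beta=0$ and $\beta=-2/3$ (the points $C_x\pm R_0$) at some $\alpha>0$; by Bertram's Lemma $F$ is a subobject of $E$ at each of these points, so I can apply Lemma \ref{bounds} there.

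Using $H\ch_1^\beta(E)=6$ and $H\ch_1^\beta(F)=d-4\beta r$, Lemma \ref{bounds} at $\beta=0$ yields $0<d<6$, while at $\beta=-2/3$ it yields $0<d+\tfrac{8}{3}r<6$. The latter is incompatible with $d\geq 1$ once $r\geq 2$, so $r=1$, and substituting back leaves $d\in\{1,2,3\}$. The Bogomolov inequality for $F$ then gives $d^2-8c\geq 0$, i.e. $c\leq d^2/8$, while the radius condition $R>R_0$ reads $c>-d/3$ and hence forces $c\geq 0$. For $d=1$ and $d=2$ these two bounds immediately isolate $c=0$, producing exactly the classes $(1,1,0)$ and $(1,2,0)$.

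The one point where the argument genuinely departs from Proposition \ref{numwalls1} --- and the step I expect to be the real obstacle --- is the case $d=3$, which the inequalities above do \emph{not} rule out (they only restrict $c$ to $\{0,1\}$). To discard it I would invoke the Bogomolov inequality for the \emph{quotient} $Q$, whose invariants are $(-1,\,6-d,\,-2-c)$: a short computation gives $\Delta_H(Q)=(6-d)^2-16-8c$, and for $d=3$ this equals $-7-8c$, which is negative for every $c\geq 0$. Since $Q$ must also be tilt-semistable along the wall, this contradicts Proposition \ref{bogomolov}, so $d=3$ cannot occur. This leaves precisely $(1,1,0)$ and $(1,2,0)$; as these correspond to two distinct radii $\sqrt{5/18}$ and $2/3$, both larger than $R_0$, there are exactly two numerical walls, as claimed.
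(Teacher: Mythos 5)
Your proof is correct and follows essentially the same route as the paper's: Bertram's Lemma plus Lemma \ref{bounds} at $\beta=0$ and $\beta=-2/3$ to bound $(r,d)$, then the radius condition $R>R_0$ combined with the Bogomolov inequality --- applied to $F$ for $d=1,2$ and to the quotient $Q$ for the leftover case --- to force $c=0$. The only deviation is that you apply Lemma \ref{bounds} with the correct right-hand side $H\ch_1^{\beta}(E)=6$, obtaining $d<6-\tfrac{8}{3}r$ and thereby eliminating $r=2$ and $d=4$ at the first step, whereas the paper's printed bound $d<7-\tfrac{8}{3}r$ forces it to also dispose of $(r,d)=(2,1)$ and $(1,4)$ by the same quotient-Bogomolov computation (with $\Delta_H(Q)=(6-d)^2-16-8c$, as you write it, rather than the paper's typographical $d^2-16-8c$) that you use for $d=3$.
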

\begin{proof} The proof is similar to that of Proposition \ref{numwalls1}, but there are more cases to handle. As before, we can restrict to $r>0$; the two values for $\beta$ we get from $R>R_0$ are $\beta=-1/3+1/3=0$ and $\beta=-1/3-1/3=-2/3$; Bertram's Lemma and Lemma \ref{bounds} yield
\[0 < d < 6 \quad \text{and} \quad  -\dfrac{8}{3}r < d < 7 -\dfrac{8}{3}r,
\]
so that we get a contradiction only for $r\geq 3$; if $r=2$ we can have $d=1$, and if $r=1$ we can have $d=1,2,3$ or $4$.

Let's start with $r=2$ and $d=1$: since $R>R_0$, from (\ref{radius3}) we get $\dfrac{1}{12}+\dfrac{c}{4}>0$, i.e. $c> -\dfrac{1}{3}$; on the other hand, using Bogomolov Inequality on the potential quotient $Q$ we obtain $25+16(-2-c)\geq 0$ which means $c\leq -\dfrac{7}{16}$, a contradiction.

Now let $r=1$; again by $R>R_0$ we get $c > -\dfrac{d}{3}$, while $\Delta(Q)\geq 0$ yields $c\leq \dfrac{d^2-16}{8}$, which gives a contradiction only when $d=3$ or $d=4$, since $c$ must be an integer. 
When $d=1$ or $d=2$, instead, if we add in the Bogomolov Inequality for $F$ itself we get $c \leq \dfrac{d^2}{8}$, which implies $c=0$ in both cases.\end{proof}

We now have the a Theorem on walls for $\M(0,(2,2),4m+2)$ as well; see also Figure \ref{fig3} below.
\begin{thm}\label{main3} Let $v=(0, (2,2),-2)$ and $H=(1,2)$; there are three walls and four chambers for $\M_\sigma(v)$ on $\P^1\times\P^1$ in the $(\alpha,\beta)$-plane. From the outermost to the innermost, the walls are given by the following pairs of objects:
\begin{itemize}
\item[(i)] $\mathcal{W}''_2: \O(1,0), \ \O(-1,-2)[1]$;
\item[(ii)] $\mathcal{W}''_1: \O(0,1), \ \O(-2,-1)[1]$;
\item[(iii)] $\mathcal{W}''_0: \O^2, \ \O(-1,-1)^2[1]$.
\end{itemize}
In the same order, the moduli spaces corresponding to each chamber are given by
\begin{itemize}
\item[(i)] the Simpson moduli space $\S=\M(0, (2,2), 4m+2)$;
\item[(ii)] a projective variety $\S'$ obtained from $\S$ by contracting the divisor $\S_2$ to a (smooth) point;
\item[(iii)] a projective variety $\S''$ isomorphic to a GIT quotient, obtained from $\S'$ by contracting the divisor $\S_1$ to a (smooth) point;
\item[(iv)] the empty set $\emptyset$.
\end{itemize}
\end{thm}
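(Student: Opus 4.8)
The plan is to follow the template of Theorems \ref{main1} and \ref{main2}, sweeping inward from the Gieseker chamber and handling the three walls of Proposition \ref{numwalls3} one at a time. By the Large Volume Limit Theorem \ref{lvl} there are finitely many walls and the outermost chamber realizes $\M_\sigma(v)=\S$. Proposition \ref{numwalls3} restricts the destabilizing subobjects of radius exceeding $R_0=1/3$ to invariants $(1,1,0)$ or $(1,2,0)$, and the radius formula \eqref{radius3} shows that these yield exactly the two semicircular walls $\mathcal{W}''_2$ (radius $2/3$) and $\mathcal{W}''_1$ (radius $\sqrt{5/18}$), both lying strictly to the left of the vertical walls $\beta=1/2$ and $\beta=1/4$ respectively.

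For the outermost wall $\mathcal{W}''_2$ I would first invoke Proposition \ref{subofsheaf} to conclude that the destabilizing subobject $F$ is an honest sheaf, hence carries the invariants $(1,2,0)$; Lemma \ref{mainlemma}(b) then forces $F\simeq\O(1,0)$ or $F\simeq\O(0,2)$. Using the minimal resolutions of Theorem \ref{maicanstrata3} I would compute (in the Appendix) that $\Hom(\O(0,2),\mathcal{E})=0$ for all $\mathcal{E}\in\S$ and that $\Hom(\O(1,0),\mathcal{E})=0$ unless $\mathcal{E}\in\S_2$, where resolution \eqref{res3''} exhibits $\O(1,0)$ as a subobject with quotient $\O(-1,-2)[1]$. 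Thus $\mathcal{W}''_2$ is an actual wall only for $\S_2\simeq\P^8$, and by Remark \ref{remarkwalls} crossing it replaces $\S_2$ by $\P(\Ext^1(\O(1,0),\O(-1,-2)[1]))$. A Serre-duality computation gives $\Ext^1(\O(1,0),\O(-1,-2)[1])\simeq\Ext^2(\O(1,0),\O(-1,-2))\simeq\Hom(\O(-1,-2),\O(-1,-2))^\vee\simeq\C$, so this projectivization is a single point: $\S_2$ is contracted to a smooth point, producing $\S'$.

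The second wall $\mathcal{W}''_1$ is treated identically. Here the destabilizing subobject has invariants $(1,1,0)$, so Lemma \ref{mainlemma}(a) pins it down to $F\simeq\O(0,1)$, which resolution \eqref{res2''} realizes as a subobject of the sheaves in $\S_1\simeq\P^8$ with quotient $\O(-2,-1)[1]$. I would then verify in the Appendix that neither the open stratum nor the point replacing $\S_2$ is affected: $\Hom(\O(0,1),\mathcal{E})=0$ for $\mathcal{E}\in\S_0$, while the contracted point is excluded by the long-exact-sequence-in-cohomology argument used in Theorem \ref{main1}. The analogous Serre-duality calculation $\Ext^1(\O(0,1),\O(-2,-1)[1])\simeq\C$ then shows $\S_1$ is likewise contracted to a smooth point, yielding $\S''$; its identification with the GIT quotient is imported from \cite{moon1}.

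It remains to show that the innermost wall $\mathcal{W}''_0$ collapses the moduli space. Resolution \eqref{res1''} exhibits $\O^2$ as a destabilizing subobject of every sheaf in the open stratum $\S_0$ with quotient $\O(-1,-1)^2[1]$, so after crossing the wall the would-be stable objects are the extensions in the opposite direction, parametrized by $\Ext^1(\O^2,\O(-1,-1)^2[1])$. By Serre duality $\Ext^2(\O,\O(-1,-1))\simeq\Hom(\O(-1,-1),\O(-2,-2))^\vee=0$, hence $\Ext^1(\O^2,\O(-1,-1)^2[1])\simeq\Ext^2(\O,\O(-1,-1))^{\oplus 4}=0$; no such extensions exist, so the open stratum vanishes and with it all of $\M_\sigma(v)$ past $\mathcal{W}''_0$. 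The main obstacle, exactly as in Theorems \ref{main1} and \ref{main2}, is the package of $\Hom$ and $\Ext$ vanishing statements that isolate which stratum each wall touches; these are elementary but delicate K\"unneth and Serre-duality computations, which I would relegate to the Appendix.
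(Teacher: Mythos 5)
Your proposal is correct and follows essentially the same route as the paper: Large Volume Limit plus Proposition \ref{numwalls3} to list the walls, Proposition \ref{subofsheaf} and Lemma \ref{mainlemma} to identify the destabilizing line bundles, the resolutions of Theorem \ref{maicanstrata3} together with the $\Hom$/$\Ext$ vanishings to isolate the affected strata, and the Serre-duality computations $\Ext^1(\O(1,0),\O(-1,-2)[1])\simeq\C$, $\Ext^1(\O(0,1),\O(-2,-1)[1])\simeq\C$ and $\Ext^1(\O,\O(-1,-1)[1])=0$ to get the two contractions and the final collapse. Your explicit check that $\mathcal{W}''_1$ (radius $\sqrt{5/18}$) lies to the left of $\beta=1/4$ is in fact slightly more careful than the paper, which misstates that bound as $\beta=1/2$.
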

\begin{proof}
By Theorem \ref{lvl} we know there are finitely many walls and in the outermost chamber we have $\M_\sigma(v)=\S$; now from (\ref{radius3}) we know that $\mathcal{W}''_2$ (resp.  $\mathcal{W}''_1$) is all lying on the left of $\beta=\dfrac{1}{2}$ (resp. $\beta=\dfrac{1}{2}$), so that in virtue of Proposition \ref{numwalls3} and Lemma \ref{mainlemma} we can replicate the exact same argument from Theorem \ref{main1}.

In fact, the outermost wall must be given by a sheaf subobject $F$; from the resolutions given in Theorem \ref{maicanstrata3} we compute $\Hom(\O(0,2),E)=0$ for all $E\in \S$ (see Appendix), so that $F\simeq\O(1,0)$. Again $\Hom(\O(1,0),E)=0$ unless $E\in \S_2$, hence $\mathcal{W}''_2$ is an actual wall for the stratum $\S_2$, with $Q\simeq \O(-1,-2)$[1]; one immediately computes $\Ext^1(\O(1,0),\O(-1,-2)[1])=\Ext^2(\O(1,0),\O(-1,-2)) = \Hom(\O(-1,-2),\O(-1,-2))=\C$, so that crossing the wall contracts the divisor $\S_2$ to a point; from \cite{moon1} we know it is a smooth point and the map is a blow-down.

Since there are no maps between $\O(1,0)$ and $\O(0,1)$ we know the trivial extension obtained by crossing the first wall is not involved in the second wall-crossing; the situation at the second wall is now completely symmetrical to the previous one and we have that crossing $\mathcal{W}''_1$ corresponds to contracting the divisor $\S_1$ to a smooth point; again by \cite{moon1}, the variety thus obtained is a GIT quotient isomorphic to the moduli space of semistable sheaves on $\P^3$ with Hilbert polynomial $m^2+3m+2$.

Finally, $\mathcal{W}''_0$ is a collapsing wall for $\S''$: in fact, by Theorem \ref{maicanstrata3} we know that $\O^2$ is a destabilizing subobject for all the sheaves in the open stratum and also $Q\simeq \O(-1,-1)^2[1]$; once we cross the wall we have $\Ext^1(\O,\O(-1,-1)[1])=\Hom(\O(-1,-1),\O(-2,-2))=0$  and this is enough to say that the open stratum vanishes and we get the empty set.

\end{proof}
\vspace{-0.5cm}
\begin{figure}[htbp]
\centering
\includegraphics[scale=0.25]{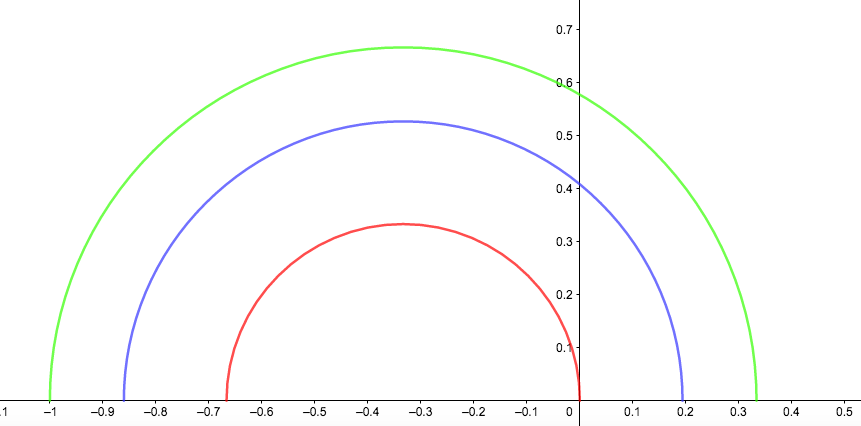}
\caption{The walls for $\M(0, (2,2), 4m+2)$ \label{fig3}}
\end{figure}
\vspace{-0.25cm}

\section{Appendix}
The computations from Theorems \ref{main1}, \ref{main2} and \ref{main3} are all very similar, so we only carry out the ones needed in Theorem \ref{main1}, which contain all the main ideas.

\subsection{Computations for Theorem \ref{main1}}
Recall $Q\simeq[\O(-2,-1)\oplus\O(-1,-2)\to\O(-1,-1)]$; the following computations were claimed throughout Theorem \ref{main1}:
\begin{itemize}
\item[(i)] $\Hom(\O(0,2),E)=0$ for all $E\in \M$;
\item[(ii)] $\Hom(\O(1,0),E)=0$ for all $E\in \M \setminus \M_2$;
\item[(iii)] $\Hom(\O(0,1),E)=0$ for all $E\in \M_0$;
\item[(iv)] $\Ext^1(Q,\O(0,1))=\C^{11}$;
\item[(v)] $\Ext^1(\O(0,1),Q)=\C$;
\item[(vi)] $\Ext^1(\mathcal{I}_{p,q}(1,1),\O(-1,-2)[1])=0$.
\end{itemize}
\begin{proof}[Proof of (i),(ii),(iii)] The vanishings of these groups are all proven in the same way, so we will only show the first one.

If $E\in \M_0$, then $\Hom(\O(0,2),E)$ fits into the long exact sequence
\small
\[
\dots\to \Hom(\O(0,2), \O^2) \to \Hom(\O(0,2),E) \to \Ext^1(\O(0,2),\O(-1,-2)\oplus \O(-1,-1))\to \dots,
\]
\normalsize
and clearly $\Hom(\O(0,2), \O^2)=0$; also $\Ext^1(\O(0,2),\O(-1,-2)) = \Ext^1(\O,\O(-1,-4))=H^1(\O(-1,-4))=0$ by K\"{u}nneth formula and similarly for $\Ext^1(\O(0,2),\O(-1,-2))$, hence the claim for $E$.
\end{proof}
\begin{proof}[Proof of (iv)] $\Hom(Q,\O(0,1))$ fits into the long exact sequence
\small
\begin{align*}
0&\to\Hom(Q,\O(0,1))\to \Hom(\O(-1,-1),\O(0,1)) \to \Hom (\O(-2,-1)\oplus\O(-1,-2),\O(0,1)) \to \\ &\to \Ext^1(Q,\O(0,1))\to \Ext^1(\O(-1,-1),\O(0,1)) \to \Ext^1(\O(-2,-1)\oplus\O(-1,-2),\O(0,1)) \to \dots;
\end{align*}
\normalsize
now $\Hom(Q,\O(0,1))=0$ by definition because $Q\in \mathcal{F}[1]$ and $\O(0,1)\in \mathcal{T}$, while we have $\Ext^1(\O(-1,-1),\O(0,1))= \Ext^1(\O,\O(1,2)) = H^1(\O(1,2))=0$ thanks to K\"{u}nneth formula.

One then easily computes $\Hom(\O(-1,-1),\O(0,1)) = \C^6$, $ \Hom (\O(-2,-1),\O(0,1))=\C^9$ and $ \Hom (\O(-1,-2),\O(0,1))=\C^8$, which altogether yield $\Ext^1(Q,\O(0,1))=\C^{11}$.
\end{proof}

\begin{proof}[Proof of (v)] Using the same complex, we now apply the functor $\Hom(\O(0,1), \text{ --- })$ to get
\small
\begin{align*}
\dots \to \Ext^1(\O(0,1),&\O(-1,-1)) \to \Ext^1(\O(0,1),Q) \to\\ &\to \Ext^2(\O(0,1),\O(-2,-1)\oplus\O(-1,-2)) \to \Ext^2(\O(0,1),\O(-1,-1))\to \dots;
\end{align*}
\normalsize
now $\Ext^1(\O(0,1),\O(-1,-1))=\Ext^1(\O,\O(-1,-2))= H^1(\O(-1,-2))=0$ by K\"{u}nneth again, and also $\Ext^2(\O(0,1),\O(-1,-1))= \Hom (\O(-1,-1),\O(-2,-1))=0$.

Moreover we have that $\Ext^2(\O(0,1),\O(-2,-1)\oplus\O(-1,-2)) = \Hom (\O(-2,-1)\oplus\O(-1,-2), \O(-2,-1))=\C$ so that also $\Ext^1(\O(0,1),Q)=\C$.
\end{proof}

\begin{proof}[Proof of (vi)] Let $F=\mathcal{I}_{p,q}(1,1) $; first of all $\Ext^1(F,\O(-1,-2)[1])=\Ext^2(F,\O(-1,-2))=\Hom (\O(-1,-2),F(-2,-2)) = \Hom (\O(1,0),F)$. Now we're going to use the fact that we have $F \simeq [\O(-1,-1)\to\O^2]$, and applying the functor $\Hom(\O(1,0), \text{ --- })$ we get
\[
\dots \to \Hom(\O(1,0),\O^2) \to \Hom (\O(1,0), F) \to \Ext^1(\O(1,0), \O(-1,-1)) \to \dots;
\]
as before $\Hom(\O(1,0),\O^2)=0$ easily, while also we have $\Ext^1(\O(1,0), \O(-1,-1))= H^1(\O(-2,-1))=0$ again by K\"{u}nneth, hence $\Hom (\O(1,0), F)=0$.
\end{proof}

% \subsection*{Computations for Theorem \ref{main2}}
% Recall that $Q\simeq[\O(-1,-2)^2\to\O(0,-1)]$; the computations we only have to prove that $\Ext^1(\O,Q)=0$. Using the resolution and applying the functor $\Hom(\O, \text{---})$ we get
% \[
% \dots \to \Ext^1(\O,\O(0,-1))\to \Ext^1(\O,Q) \to \Ext^2(\O,\O(-1,-2)^2) \to \dots;
% \]
% now $\Ext^1(\O,\O(0,-1))=H^1(\O(0,-1))=0$ by K\"{u}nneth again, while  $\Ext^2(\O,\O(-1,-2)^2)=
% \Hom(\O(-1,-2)^2,\O(-2,-2))=0$, so that $\Ext^1(\O,Q)=0$.

\bibliographystyle{alpha}
\bibliography{biblio}

\end{document}